\setheadfoot{\baselineskip}{6ex} 
\newtheoremstyle{theorem}{}{}{\itshape}{}{\bfseries}{}{\newline}{\thmname{#1}\thmnumber{ #2}\thmnote{ (#3)}}
\newtheoremstyle{proposition}{}{}{\itshape}{}{\bfseries}{}{1em}{}
\newtheoremstyle{standard}{}{}{}{}{\bfseries}{}{1em}{}
\newtheoremstyle{unnumbered}{}{}{}{}{\bfseries}{}{1em}{\thmname{#1}}
\theoremstyle{theorem} 
\newtheorem{thm}{Theorem}[section] 
\theoremstyle{proposition} 
\newtheorem{prop}[thm]{Proposition} 
\newtheorem{coro}[thm]{Corollary}
\theoremstyle{standard} 
\theoremstyle{unnumbered} 
\newtheorem{rem}{Remark} 
\newtheorem{exmpl}{Example}
\DeclareMathOperator{\Shift}{T} 
\DeclareMathOperator{\Sub}{Sub} 
\DeclareMathOperator{\Jac}{H} 
\DeclareMathOperator{\GL}{GL} 
\newcommand{\alphab}{\mathcal{A}} 
\newcommand{\alphabEv}{\widetilde{\mathcal{A}}} 
\newcommand{\eventNr}{\widetilde{K}} 
\newcommand{\length}[1]{|#1|} 
\newcommand{\cylin}[2]{C_{#2}(#1)} 
\newcommand{\block}[1]{p^{(#1)}} 
\newcommand{\subshift}{\Omega} 
\newcommand{\langu}[1]{\Sub(#1)} 
\newcommand{\comp}{\mathcal{C}} 
\newcommand{\growth}{\mathcal{G}} 
\newcommand{\pali}{\mathcal{P}} 
\newcommand{\repe}{\mathcal{R}} 
\newcommand{\alleB}{F} 
\newcommand{\card}[1]{\# #1} 
\newcommand{\cmplmt}[1]{#1^{\mathsf{c}}} 
\newcommand{\rev}[1]{\left. #1 \right.^{\mathsf{R}}} 
\newcommand{\debruijn}[1]{\mathcal{G}_{#1}} 
\newcommand{\vertices}[1]{\mathcal{V}_{#1}} 
\newcommand{\edges}[1]{\mathcal{E}_{#1}} 
\title{Combinatorics of One-Dimensional Simple Toeplitz Subshifts} 
\author{Daniel Sell\thanks{\textsc{Friedrich-Schiller-Universität Jena, Institut für Mathematik, 07743 Jena, Germany} \newline \textit{E-mail address:} \texttt{daniel.sell@uni-jena.de}}} 
\date{} 
\begin{document}
\allowdisplaybreaks 

\maketitle

\begin{abstract}
This paper provides a systematic study of fundamental combinatorial properties of one-dimensional, two-sided infinite simple Toeplitz subshifts. Explicit formulas for the complexity function, the palindrome complexity function and the repetitivity function are proven. Moreover, a complete description of the de Bruijn graphs of the subshifts is given. Finally, the Boshernitzan condition is characterised in terms of combinatorial quantities, based on a recent result of Liu and Qu (\cite{LiuQu_Simple}). Particular simple characterisations are provided for simple Toeplitz subshifts that correspond to the orbital Schreier graphs of the family of Grigorchuk's groups, a class of subshifts that serves as main example throughout the paper.
\end{abstract}

\tableofcontents* 
\thispagestyle{plain} 


\section{Introduction}

The investigation of Toeplitz subshifts has a long history. In fact, Toeplitz subshifts have been rediscovered in various contexts and serve as a prime source of (counter)examples, see for instance \cite{JacobsKeane_01Toeplitz, Wil_ToepNotUniqErgod} or \cite{GroeJae_PowEntr}. Our investigation of Toeplitz subshifts is motivated by their utilisation in two seemingly unrelated fields. Firstly, for Grigorchuk's group it has been shown in \cite{GLN_SpectraSchreierAndSO} (see \cite{GLN_Survey} as well) that the Laplacian on the two-sided Schreier graphs of the group is unitary equivalent to the Jacobi operator on a Toeplitz subshift. Thus the investigation of Toeplitz subshifts can help to improve the understanding of self similar groups, see \cite{Voro_SubstGrigo} as well. Secondly, Toeplitz words have also become more popular as a model for quasicrystals over the last decade, see for instance \cite{BJL_ToeplModelSet}. In particular, spectral properties of Schrödinger operators with a potential that is given by an element of a Toeplitz subshift have received quite some attention and are studied for example in \cite{LiuQu_Simple}, \cite{LiuQu_Bounded} and \cite{DamLiuQu_PatternSturm}. The present work addresses fundamental combinatorial properties of so called simple Toeplitz words and their associated subshifts.

\paragraph{Simple Toeplitz words}
\label{para:IntroSimpToepW}

Toeplitz words were introduced in \cite{JacobsKeane_01Toeplitz} as elements in  $ \{ 0, 1 \}^{ \mathbb{Z}} $ with a certain regularity. The regularity stems from the construction of Toeplitz words via so called ``partial words''. These are periodic words with letters in $\{ 0, 1 \}$ and some undetermined positions (``holes''). The holes are then successively filled with other partial words. It is required in \cite{JacobsKeane_01Toeplitz} that no undetermined part remains in the limit of the hole filling process. Thus, in the limit word $\omega \in \{0, 1\}^{\mathbb{Z}} $ every letter $\omega( j )$ is repeated periodically, but the period depends on the position $j$, that is:
\[ \forall j \in \mathbb{Z} \quad \exists p \in \mathbb{N} \quad \text{such that} \quad \omega_{j} = \omega_{j + k p} \quad \text{holds for all} \quad k \in \mathbb{Z} \, . \]
In the following, we will consider the subclass of so called simple Toeplitz words. In their construction, every partial words consists of the repetition of a single letter and there is exactly one hole in every partial word. Note that the details of the notion of simple Toeplitz words in the literature differ corresponding to the different settings that are considered. For example, simple Toeplitz words are defined in \cite{KamZamb_MaxPattCompl} in the context of one-sided infinite words over set of set of cardinality two without an undetermined part. In \cite{QRWX_PatCompDDim}, simple Toeplitz words are two-sided infinite ``words'' in $\mathbb{Z}^{d}$ that must not have an undetermined part. In this work, we will use the definition from \cite{LiuQu_Simple} and define simple Toeplitz words as one-dimensional, two-sided infinite words $ \omega \in \alphab^{\mathbb{Z}} $ over a finite set $\alphab$. Words with an undetermined part are allowed as well, after filling the undetermined position appropriately.

\paragraph{Schreier graphs of self similar groups}
Let $X$ be a finite set and consider $ \cup_{k = 0}^{\infty} X^{k} $. This union can be thought of as the vertex set of a regular rooted tree, where two vertices are connected if and only if they are of the form $u$ and $ua$ with $ u \in \cup_{k = 0}^{\infty} X^{k} $ and $ a \in X $. A group $G$ of automorphisms on such a tree is called self similar if for every $ g \in G $ and every $ a \in X $ there exist elements $ h \in G $ and $ b \in X $ such that $ g( a u ) = b h( u ) $ holds for all  $ u \in \cup_{k = 0}^{\infty} X^{k} $. In other words, for every group element $g$ and every $ a \in X $, there is a group element $h$ such that $g$ acts on the subtree below $a$ in the same way as $h$ acts on the whole tree. Such groups have been studied during the last decades since they provide examples of groups with interesting properties (see for example \cite{BGN_FractalGrSets, BGS_BranchGr, Nekrash_SelfSimGr} and the references therein). For instance, Grigorchuk's group, introduced in \cite{Grig_BurnsideRuss}, is a self similar group of automorphisms on the binary tree and was the first example of a group with intermediate growth (\cite{Grig_DegrOfGrowthRuss}). With $X$ chosen as $ X = \{ 0, 1 \} $, Grigorchuk's group is generated by the four elements $a$, $b$, $c$, $d$ that satisfy
\begin{align*}
a( 0u ) &= 1u & b( 0u ) &= 0 a( u ) & c( 0u ) &= 0 a( u ) & d( 0u ) &= 0 u \\
a( 1u ) &= 0u & b( 1u ) &= 1 c( u ) & c( 1u ) &= 1 d( u ) & d( 1u ) &= 1 b( u ) \, .
\end{align*}
In fact, the above definition was generalized in \cite{Grig_DegrOfGrowthRuss} to a whole family $(G_{\omega})_{\omega}$ of groups: Let $a$ be defined as above. Moreover, let $\omega \in \{ \pi_{b}, \pi_{c}, \pi_{d} \}^{\mathbb{N}} $ be a sequence of the maps
\[ \pi_{b} : b \mapsto id ,\, c \mapsto a , \, d \mapsto a \; , \quad \pi_{c} : b \mapsto a ,\, c \mapsto id , \, d \mapsto a \; , \quad \pi_{d} : b \mapsto a ,\, c \mapsto a , \, d \mapsto id \, . \]
We can now consider the automorphism $\widehat{b}$ that acts like $\omega_{1}(b)$ below the vertex $0$, like $ \omega_{2}(b) $ below the vertex $10$, like $\omega_{3}(b)$ below the vertex $110$, etc. Similarly, we define the automorphism $\widehat{c}$ by the action of $( \omega_{1}(c), \omega_{2}(c), \omega_{3}(c), \hdots ) $ and the the automorphism $\widehat{d}$ by the action of $( \omega_{1}(d), \omega_{2}(d), \omega_{3}(d), \hdots ) $, see Figure~\ref{fig:FamGrigGenerat}.

\begin{figure}
\centering
\footnotesize
%
%
\begin{minipage}[b]{0.3\textwidth}
\centering
\begin{tikzpicture}
[every node/.style ={inner sep=0pt, minimum height=0pt,draw, circle}, every label/.style ={label distance=0.5ex, draw=none, rectangle}, level distance = 2em, sibling distance=6ex]
\node{}
	child{ node[label=below:$\omega_{1}( b )$]{}
	}
	child{ node{}
		child{ node[label=below:$\omega_{2}( b )$]{}
		}
		child{ node{}
			child{ node[label=below:$\omega_{3}( b )$]{} }
			child{ node[label=below:$\ddots$]{} }
		}
	};
\end{tikzpicture}
\subcaption{The generator $\widehat{b}$\label{fig:FamGrigB}}
\end{minipage}
\hfill
%
%
\begin{minipage}[b]{0.3\textwidth}
\centering
\begin{tikzpicture}
[every node/.style ={inner sep=0pt, minimum height=0pt,draw, circle}, every label/.style ={label distance=0.5ex, draw=none, rectangle}, level distance = 2em, sibling distance=6ex]
\node{}
	child{ node[label=below:$\omega_{1}( c )$]{}
	}
	child{ node{}
		child{ node[label=below:$\omega_{2}( c )$]{}
		}
		child{ node{}
			child{ node[label=below:$\omega_{3}( c )$]{} }
			child{ node[label=below:$\ddots$]{} }
		}
	};
\end{tikzpicture}
\subcaption{The generator $\widehat{c}$\label{fig:FamGrigC}}
\end{minipage}
\hfill
%
%
\begin{minipage}[b]{0.3\textwidth}
\centering
\begin{tikzpicture}
[every node/.style ={inner sep=0pt, minimum height=0pt,draw, circle}, every label/.style ={label distance=0.5ex, draw=none, rectangle}, level distance = 2em, sibling distance=6ex]
\node{}
	child{ node[label=below:$\omega_{1}( d )$]{}
	}
	child{ node{}
		child{ node[label=below:$\omega_{2}( d )$]{}
		}
		child{ node{}
			child{ node[label=below:$\omega_{3}( d )$]{} }
			child{ node[label=below:$\ddots$]{} }
		}
	};
\end{tikzpicture}
\subcaption{The generator $\widehat{d}$\label{fig:FamGrigD}}
\end{minipage}
\caption{The generators $\widehat{b}$, $\widehat{c}$ and $\widehat{d}$ of the group $G_{\omega}$}
\label{fig:FamGrigGenerat}
\normalsize
\end{figure}

\noindent
For every such sequence $\omega \in \{ \pi_{b}, \pi_{c}, \pi_{d} \}^{\mathbb{N}} $ we define $G_{\omega}$ as the group that is generated by $a$, $\widehat{b}$, $\widehat{c}$ and $\widehat{d}$.  Note that the periodic sequence $ \omega = (\pi_{d}, \pi_{c}, \pi_{b} , \hdots ) $ yields precisely Grigorchuk's group. The family $ \{ \, G_{\omega} : \omega \in \{ \pi_{b}, \pi_{c}, \pi_{d} \}^{\mathbb{N}} \, \} $ has been studied heavily in the past. Particular relevant to our investigation are the recent works \cite{MBon_TopoFullGr} and \cite{GLN_SpectraSchreierAndSO}, since they connect self similar groups and Toeplitz subshifts. More precisely, it was shown in \cite{MBon_TopoFullGr} that every group $G_{\omega}$ can be embedded into the topological full group of a minimal subshift. This subshift is constructed from so-called Schreier graphs and, although not explicitly mentioned in \cite{MBon_TopoFullGr}, can be shown to be a Toeplitz subshift. In \cite{GLN_SpectraSchreierAndSO}, the Laplacians associated to the Schreier graphs of Grigorchuk's group are treated. A key insight is that these operators are unitarily equivalent to certain Jacobi operators associated to a subshift. This subshift is noted to be a Toeplitz subshift and turns out to be the same as the one that was described in \cite{MBon_TopoFullGr}. However, in \cite{GLN_SpectraSchreierAndSO} the subshift is obtained by a different construction, which is based on the fact that the group action on the tree induces an action on the boundary $X^{\mathbb{N}}$ as well. Hence the following directed, labelled graph can be defined: The set of vertices is given by $ ( \cup_{k \in \mathbb{N}} X^{k} ) \cup X^{\mathbb{N}} $. There is an edge from vertex $u$ to vertex $v$ labelled $s \in \{ a,b,c,d \} $  if and only if $ s( u ) = v $ holds. The connected components of this graph are called Schreier graphs. For a vertex $u \in X^{k}$, the connected component of $u$ corresponds to the $k$-th level of the tree. The Schreier graph of the level $k+1$ can be obtained from two copies of the level-$k$-Schreier graph by connecting them in a certain way that is specified by the element $\omega_{k}$ in the sequence $ \omega = (\pi_{d}, \pi_{c}, \pi_{b} , \hdots ) $. This yields a structure for the Schreier graphs that is similar to the one of a Toeplitz word (see \cite{GLN_SpectraSchreierAndSO} for details): Roughly speaking, every second connection in the graph corresponds to the action of the generator $a$. Of the remaining ``holes'' in the graph, every second connection corresponds to the action defined by $\omega_{1} = \pi_{d}$. Of the connections that are still missing after that, every second one corresponds to the action defined by $\omega_{2} = \pi_{c}$, etc. In fact, this construction of the subshift can be generalized to the whole family of groups, cf. the introductory section in \cite{GLN_SpectraSchreierAndSO}.

\paragraph{Schrödinger operators on quasicrystals}
Aperiodic words can serve as mathematical models for quasicrystals. The quantum mechanical properties of the quasicrystal are then described by the spectrum of the Schrödinger operator whose potential is given by the aperiodic word. For the relevance of quasicrystals see for example the recent books \cite{BaakeGrimm_Aperio} and \cite{MathAperioOrd}. One much studied class of models are Sturmian words, that is, words that have exactly $L+1$ subwords of length $L$ for every $L \in \mathbb{N}$. By the famous Morse-Hedlund theorem (\cite{MorseHedl_SymbDyn}), this is the least possible number of subwords that an aperiodic word can exhibit. Accordingly, these models have played a major role in the investigation of quasicrystals and in particular the associated Schrödinger operators have attracted a lot of attention (\cite{KKT_LocalProbl1D, OPRSS_1DSEqn, Cas_SymbDyn, Suto_QuasiPerSO, Suto_SingContSpect, BellIochScopTest_SpecProp, DamLenz_UniSpectProp1, DamLenz_UniSpectProp2, DamKillipLenz_UniSpectProp3}). By \cite{BellIochScopTest_SpecProp}, the spectrum of a discrete Schrödinger operator with Sturmian potential is always a Cantor set of Lebesgue measure zero. Moreover, the absence of eigenvalues for all Sturmian Potentials was shown in \cite{DamKillipLenz_UniSpectProp3}. Together these two results imply purely singular continuous spectrum.
In recent years, pattern Sturmian models, which are a generalization of Sturmian models, have become a focus of research. Pattern Sturmian words (in the sense of \cite{KamZamb_SequEntro}) have exactly $ 2L $ different subsets of $L$ elements for every $L \in \mathbb{N}$. This is the least possible number of subwords, that an aperiodic word can exhibit and every Sturmian word is pattern Sturmian as well (see \cite{KamZamb_SequEntro}). In \cite{DamLiuQu_PatternSturm}, the above mentioned results of \cite{BellIochScopTest_SpecProp} and \cite{DamKillipLenz_UniSpectProp3} could be generalized to these words: The spectrum of a discrete Schrödinger operator with a potential that is given by a pattern Sturmian Toeplitz sequence, has zero Lebesgue measure and is purely singular continuous. Moreover, it was shown in \cite{GKBYM_MaxPatternToepl} that one-sided infinite simple Toeplitz words over a 2-letter alphabet without an undetermined part are pattern Sturmian.

\paragraph{Content of the paper}
Because of their importance for quasicrystals and self similar groups, the focus of the research has often been on the spectral properties of Schrödinger operators. Here, we give a systematic discussion of fundamental combinatorial properties of simple Toeplitz subshifts instead. The complexity function and the repetitivity function of simple Toeplitz subshifts are explicitly computed in full generality, i.e. over any finite alphabet $\alphab$ and with arbitrary coding sequences $ (a_{k}) \in \alphab^{\mathbb{N}_{0}} $ and $ (n_{k}) \in ( \mathbb{N} \setminus \{ 1 \} )^{\mathbb{N}_{0}} $. This substantially generalizes and extends earlier pieces of work dealing with special cases: In \cite{GLN_Combinat}, the complexity was determined for the subshift that is associated to Grigorchuk's group, i.e. $ \alphab = \{ a,x,y,z \} $, $ (a_{k}) = (a, x, y, z, x, y, z, \hdots) $ and $ (n_{k}) = (2, 2, 2, 2, \hdots) $. In \cite{DKMSS_Regul-Article}, simple Toeplitz subshifts of the form  $ \alphab = \{ a,x,y,z \} $, $ (a_{k}) = (a, x, y, z, x, y, z, \hdots) $ and $ (n_{k}) = (2, 2^{l_{1}}, 2^{l_{2}}, 2^{l_{3}} \hdots) $ with $ l_{1}, l_{2}, l_{3}, \hdots \in \mathbb{N} $ are considered. For them, among other things, a formula for the complexity, estimates for the repetitivity and a characterization of $\alpha$-repetitivity were obtained.

Before discussing the combinatorial properties, Section~\ref{sec:SimpleTSubsh} gives the definition of simple Toeplitz words that will be used throughout this work. Moreover, the associated simple Toeplitz subshifts and the main examples are introduced. We call them the Grigorchuk subshift and the generalized Grigorchuk subshifts. In Section~\ref{sec:SubwordComp}, an explicit formula for the subword complexity of a simple Toeplitz subshift is derived. For this, complexity bounds at certain word lengths are proven and the growth rate of the complexity is estimated. Based on these results, the sequence of de Bruijn graphs is investigated in Section~\ref{sec:DeBruijnPali}. A particular reflection symmetry in the graphs yields an explicit formula for the palindrome complexity. The analysis of combinatorial properties continuous in Section~\ref{sec:Repe} with the deduction of a formula for the repetitivity function, followed by a short discussion of $\alpha$-repetitivity. Finally, the Boshernitzan condition for simple Toeplitz subshifts is studied in Section~\ref{sec:BoshJacobi} and its implication for the spectrum of  Jacobi operators is briefly reviewed.


\section{Simple Toeplitz Subshifts}
\label{sec:SimpleTSubsh}

In this section, our objects of interest are defined. As in \cite{LiuQu_Simple}, we consider simple Toeplitz words which are one-dimensional, two-sided infinite words over a finite alphabet and may have an undetermined position. Some basic properties of simple Toeplitz words are given in the first subsection. In the second part, different ways of associating subshifts to simple Toeplitz words are discussed. Our main examples, the Grigorchuk subshift and what we call the generalized Grigorchuk subshifts, are defined. 

\subsection{Simple Toeplitz Words}
\label{subsec:SimpleTWords}

Let $\alphab$ be a finite set, called the alphabet. Its elements are referred to as letters. A word of length $L \in \mathbb{N}_{0} $ is an element $ u = u(1) \hdots u(L) $ of $ \alphab^{\{ 1 , \hdots , L \}} $. For a given (finite) word $u$, we use $\length{u} \in \mathbb{N}_{0} $ to denote its length, where the word of length zero is called the empty word. A two-sided infinite word is an element $ \alpha = \hdots \alpha(-1) \alpha(0) \alpha(1) \hdots $ of $ \alphab^{\mathbb{Z}}$. We consider the discrete topology on $\alphab$ and equip $\alphab^{\mathbb{Z}}$ with the product topology. By $\cylin{u}{j}$ we denote the cylinder set $ \cylin{u}{j} := \{ \alpha \in \alphab^{\mathbb{Z}}: \alpha(j) = u(1), \hdots , \alpha(j+|u|-1) = u(\length{u}) \} $ of those two-sided infinite words, in which the finite word $ u $ appears at position $j$. The cylinder sets form a base of open sets in $\alphab^{\mathbb{Z}}$. Actually, the topological space $\alphab^{\mathbb{Z}}$ is metrizable and one metric is defined by
\[ d( \alpha_{1}, \alpha_{2} ) := \sum_{j = - \infty}^{\infty} \frac{ \delta( \alpha_{1}(j), \alpha_{2}(j) ) }{ 2^{|j|} } \, , \quad \text{where } \delta( a_{1}, a_{2} ) := \begin{cases} 0 & \text{ if } a_{1} = a_{2} \\ 1& \text{ if } a_{1} \neq a_{2} \end{cases} \]
for two-sided infinite words $ \alpha_{1}, \alpha_{2} \in \alphab^{\mathbb{Z}} $ and letters $a_{1}, a_{2} \in \alphab$. Two words are close in this topology if they agree on a large interval around the origin. Thus a sequence of two-sided infinite words $(\alpha_{k})$ converges to a word $\alpha$ if, for every interval, there exists a number $k_{0}$ such that all $\alpha_{k}$, for $ k \geq k_{0} $, agree with $\alpha$ on this interval.

We turn now to the construction of simple Toeplitz words. As mentioned in the introduction, they are obtained as the limit of a sequence of two-sided infinite periodic words with ``holes'' that are successively filled. To make this precise, we introduce an additional letter $ ? \notin \alphab$, which represents the hole. Let $ \alpha_{1} \in (\alphab \cup \{ ? \})^{\mathbb{Z}} $ be periodic with period $n$ and only a single occurrence of $?$ per period. Then there exists an integer $r$ with $ 0 \leq r < n $ such that $ n \mathbb{Z} + r $ are the positions of the holes in $\alpha_{1}$ (the so called undetermined part). Following \cite{LiuQu_Simple}, we define the filling of $\alpha_{2} \in (\alphab \cup \{ ? \})^{\mathbb{Z}} $ into the holes of $\alpha_{1}$ by
\[ (\alpha_{1} \triangleleft \alpha_{2})(j) := \begin{cases}
\alpha_{1}(j) & \text{for } j \notin n \mathbb{Z} + r\\
\alpha_{2}( \frac{j-r}{n} ) & \text{for } j \in n \mathbb{Z} + r
\end{cases} \, . \]
In other words, we fill $\alpha_{2}(0)$ into the first hole at a non-negative position, $\alpha_{2}(1)$ into the second hole at a non-negative position, and so on, while we fill $\alpha_{2}(-1)$ into first hole at a negative position, $\alpha_{2}(-2)$ into second hole at a negative position, and so on. If both $\alpha_{1}$ and $\alpha_{2}$ are periodic words with holes, then $\alpha_{1} \triangleleft \alpha_{2}$ is a periodic word with holes, too. Thus we can fill a word $ \alpha_{3} \in (\alphab \cup \{ ? \})^{\mathbb{Z}} $ into the holes of $\alpha_{1} \triangleleft \alpha_{2}$ and so on.

While a similar hole filling procedure occurs in the construction of all kinds of Toeplitz words, simple Toeplitz words are distinguished by the type of periodic words $\alpha_{k}$ that are used. They are described by a sequence $ (a_{k})_{k \in \mathbb{N}_{0}} $ of letters $a_{k} \in \alphab$, a sequence $ (n_{k})_{k \in \mathbb{N}_{0}} $ of period lengths $n_{k} \in\mathbb{N} $, $n_{k} \geq 2$ and a sequence $ (r_{k})_{k \in \mathbb{N}_{0}} $ of non-negative integer positions $0 \leq r_{k} < n_{k}$. From these sequences we define the two-sided infinite periodic words
\[ (a_{k}^{n_{k}-1}?)^{\infty} := \hdots a_{k} \hdots a_{k} ? \underbrace{a_{k} \hdots a_{k}}_{n_{k}-1 \text{-times}} ? a_{k} \hdots a_{k} ? \hdots \]
with period $n_{k}$ and undetermined part $ n_{k} \mathbb{Z} + r_{k} $. To keep track of which letters are still to be insert, we define $ \alphab_{k} := \{ a_{j} : j \geq k \} $ for  $k \in \mathbb{N}_{0}$. Following \cite{LiuQu_Simple}, we use $ \alphabEv := \cap_{k \geq 0} \alphab_{k} $ to denote the eventual alphabet, that is, the set of letters that appear infinitely often in the sequence $(a_{k})$. Since $\alphab$ is a finite set, there exists a number $ \eventNr $ such that $ a_{k} \in \alphabEv $ and $ \alphab_{k} = \alphabEv $ hold for all $ k \geq \eventNr $.

To construct a simple Toeplitz word, we insert $(a_{1}^{n_{1}-1}?)^{\infty}$ into the holes of $(a_{0}^{n_{0}-1}?)^{\infty}$, then insert $(a_{2}^{n_{2}-1}?)^{\infty}$ in the remaining holes of the obtained word, then insert $(a_{3}^{n_{3}-1}?)^{\infty}$ and so on. Thus we obtain a sequence  $( \omega_{k} )_{k \in \mathbb{N}_{0}}$ of two-sided infinite words defined by
\[ \omega_{k} := (a_{0}^{n_{0}-1}?)^{\infty} \triangleleft (a_{1}^{n_{1}-1}?)^{\infty} \triangleleft (a_{2}^{n_{2}-1}?)^{\infty} \triangleleft \hdots  \triangleleft (a_{k}^{n_{k}-1}?)^{\infty} \, . \]

\begin{prop}
\label{prop:OmegaN}
The word $ \omega_{k} $ is periodic with period $ n_{0} \cdot n_{1} \cdot \hdots \cdot n_{k} $ and has undetermined part $ U_{k} = n_{0} \cdot \hdots \cdot n_{k} \mathbb{Z} + \Big[ r_{0} + \sum_{j=1}^{k} r_{j}\cdot n_{0} \cdot \hdots \cdot n_{j-1} \Big] $. In particular, there is exactly one undetermined position per period.
\end{prop}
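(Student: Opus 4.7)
The plan is to proceed by induction on $k$, writing $P_k := n_0 \cdot n_1 \cdot \hdots \cdot n_k$ for the claimed period and $R_k := r_0 + \sum_{j=1}^{k} r_j \cdot n_0 \cdot \hdots \cdot n_{j-1}$ for the claimed offset of the undetermined part. I would state the induction hypothesis as the conjunction of three assertions: $\omega_{k}$ is $P_k$-periodic, its set of undetermined positions equals $P_k \mathbb{Z} + R_k$, and $0 \le R_k < P_k$. The third assertion is not explicitly stated in the proposition but it is what guarantees ``exactly one undetermined position per period'', so it is natural to carry it along through the induction.

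The base case $k=0$ is immediate: by construction, $\omega_0 = (a_0^{n_0-1} ?)^{\infty}$ has period $n_0 = P_0$ and undetermined part $n_0\mathbb{Z} + r_0 = P_0\mathbb{Z} + R_0$, and the requirement $0 \le r_0 < n_0$ gives $0 \le R_0 < P_0$. For the inductive step I would fix $k \ge 1$ and assume the statement for $k-1$. By the definition of the hole-filling operation $\triangleleft$, applied with $\alpha_1 = \omega_{k-1}$ (so $n = P_{k-1}$, $r = R_{k-1}$) and $\alpha_2 = (a_k^{n_k-1} ?)^{\infty}$, one has
\[ \omega_k(j) = \begin{cases} \omega_{k-1}(j) & j \notin P_{k-1}\mathbb{Z} + R_{k-1}, \\[0.3em] (a_k^{n_k-1} ?)^{\infty}\!\left(\tfrac{j - R_{k-1}}{P_{k-1}}\right) & j \in P_{k-1}\mathbb{Z} + R_{k-1}. \end{cases} \]
The undetermined positions of $\omega_k$ are therefore precisely those $j \in P_{k-1}\mathbb{Z} + R_{k-1}$ for which $(j - R_{k-1})/P_{k-1} \in n_k\mathbb{Z} + r_k$. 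Solving this condition gives $j \in P_{k-1} n_k \mathbb{Z} + (P_{k-1} r_k + R_{k-1}) = P_k \mathbb{Z} + R_k$, which is the required description.

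For the periodicity, I would argue separately on the two parts of the above case distinction. On positions outside $U_{k-1}$, $\omega_k$ agrees with $\omega_{k-1}$, which is $P_{k-1}$-periodic by hypothesis and hence also $P_k$-periodic. On positions $j \in P_{k-1}\mathbb{Z} + R_{k-1}$, adding $P_k = P_{k-1} n_k$ to $j$ shifts the argument $(j - R_{k-1})/P_{k-1}$ by exactly $n_k$, which is the period of $(a_k^{n_k-1} ?)^{\infty}$. To close the induction I verify $0 \le R_k < P_k$: non-negativity is clear from $R_{k-1} \ge 0$ and $r_k \ge 0$, while $R_k = R_{k-1} + r_k P_{k-1} < P_{k-1} + (n_k - 1) P_{k-1} = P_k$ by the inductive bound on $R_{k-1}$ together with $r_k \le n_k - 1$. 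This bound immediately yields the ``exactly one undetermined position per period'' claim.

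I do not anticipate a genuine obstacle; the only place where one has to be a bit careful is the index arithmetic in applying the definition of $\triangleleft$, since that definition is phrased for an $\alpha_1$ that is literally $n$-periodic with holes at $n\mathbb{Z} + r$, and one must therefore cite the inductive hypothesis on $\omega_{k-1}$ to justify that the definition applies with $n = P_{k-1}$ and $r = R_{k-1}$. Everything else is bookkeeping.
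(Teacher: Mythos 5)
Your proof is correct and follows essentially the same route as the paper's: induction on $k$, with the undetermined part of $\omega_k$ obtained by composing the hole positions of $\omega_{k-1}$ with those of the inserted word $(a_k^{n_k-1}?)^{\infty}$, exactly as in the paper. The only cosmetic difference is the periodicity step, where you verify directly that $n_0 \cdots n_k$ is a period via the two cases in the definition of $\triangleleft$ and deduce ``one hole per period'' from the bound $0 \le R_k < P_k$, whereas the paper sandwiches the period between the hole spacing (lower bound) and the insertion structure (upper bound); both arguments are sound.
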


\begin{proof}
We proceed by induction: For $k = 0$ the claim is clearly true, since $ \omega_{0} = (a_{0}^{n_{0}-1}?)^{\infty}$ has by definition period $n_{0}$ and undetermined part $ n_{0} \mathbb{Z} + r_{0} $. Now assume that the claim about the period and the undetermined part holds for $ \omega_{k} $. By definition, we have $ \omega_{k+1} = \omega_{k} \triangleleft  (a_{k+1}^{n_{k+1}-1}?)^{\infty} $. Since $  (a_{k+1}^{n_{k+1}-1}?)^{\infty} $ has undetermined part $ n_{k+1} \mathbb{Z} + r_{k+1} $, the undetermined part of $\omega_{k+1}$ is
\begin{align*}
& n_{0} \cdot \hdots \cdot n_{k} \cdot \Big( n_{k+1} \mathbb{Z} + r_{k+1}  \Big) + \Big[ r_{0} + \sum_{j=1}^{k} r_{j} \cdot n_{0} \cdot \hdots \cdot n_{j-1} \Big] \\* 
=& n_{0} \cdot \hdots \cdot n_{k+1} \mathbb{Z} + \Big[ r_{0} + \sum_{j=1}^{k+1} r_{j} n_{0} \cdot \hdots \cdot n_{j-1} \Big] \, .
\end{align*}
In particular, the period of $\omega_{k+1}$ has to be at least $ n_{0} \cdot \hdots \cdot n_{k+1} $, since this is the distance between two undetermined positions. On the other hand, we obtained $\omega_{k+1}$ by inserting the word $ (a_{k+1}^{n_{k+1}-1}?)^{\infty} $ with period $n_{k+1}$ into the word $\omega_{k}$ with period $ \prod_{j = 0}^{k} n_{j} $ and exactly one hole per period. Hence the resulting word $\omega_{k+1}$ has a period of at most $ n_{k+1} \cdot \prod_{j = 0}^{k} n_{j} $. Since the undetermined part has the same period as the word, there is exactly one undetermined position per period.
\end{proof}

For later use, we define $\block{k}$ to be the block of letters between two consecutive holes in $ \omega_{k} $. By the above proposition, $\block{k}$ is well defined and its length is given by $ \length{\block{k}} + 1 = n_{0} \cdot \hdots \cdot n_{k} $ for all $ k \geq 0 $. In addition, it is convenient to define $\length{\block{-1}} = 0 $, such that the relation
\[ \length{\block{k}} + 1 = n_{k} \cdot ( \length{\block{k-1}} + 1 ) \]
holds for all $k \geq 0$. Moreover it is easy to see from the definition of $\omega_{k}$ that the blocks $\block{k}$ satisfy the recursion relation
\[ \block{0} = a_{0}^{n_{0}-1} \qquad \text{and} \qquad \block{k+1} = \underbrace{ \block{k} \; a_{k+1} \; \block{k} \; \hdots \; \block{k}\; a_{k+1} \; \block{k} }_{n_{k+1} \text{-times } \block{k} \text{ and }  (n_{k+1}-1) \text{-times } a_{k+1} } \, . \]

Now we proceed toward the definition of a simple Toeplitz word by taking the limit $ \omega_{\infty} := \lim_{k \to \infty} \omega_{k} $ in $ ( \alphab \cup \{ ? \} )^{\mathbb{Z}} $. The undetermined parts $ U_{k} $ of the words $\omega_{k}$ form a decreasing sequence of sets. The undetermined part $ U_{\infty} := \cap_{k \geq 0} U_{k} $ of $\omega_{\infty} $ is either empty or a single position. If $ U_{\infty} $ is empty and the limit word $ \omega_{\infty} $ is not periodic, then $\omega_{\infty}$ called a normal Toeplitz word. If $ U_{\infty} $ is not empty, then we insert an arbitrary letter $ \widetilde{a} \in \alphabEv $ into the single undetermined position. If the resulting word $ \omega_{\infty}^{(\tilde{a})} $ is not periodic, then $ \omega_{\infty}^{(\tilde{a})} $ is called an extended Toeplitz word. Following \cite{LiuQu_Simple}, we call a word a simple Toeplitz word if it is either a normal Toeplitz word or an extended Toeplitz word. Note that other definitions are used in the literature as well; see the paragraph \textit{``\titleref{para:IntroSimpToepW}''} in the introduction. The sequences $ (a_{k})_{k \in \mathbb{N}_{0}} $, $ (n_{k})_{k \in \mathbb{N}_{0}} $ and $ (r_{k})_{k \in \mathbb{N}_{0}} $ are called a coding of the obtained simple Toeplitz word. In the following, we will always assume $ a_{k} \neq a_{k+1} $ for all $k$, since subsequent occurrences of the same letter can be expressed as a single occurrence where the period length $n_{k}$ is increased accordingly. Moreover, we will always assume $ \card{ \alphabEv } \geq 2 $, since Toeplitz words must not be periodic and $ \card{ \alphabEv } = 1 $ (that is, an eventually constant sequence of letters) implies a periodic word:

\begin{prop}
Let $ (a_{k}) $, $ (n_{k}) $ and $ (r_{k}) $ be as defined above. Let $\omega$ denote the word $\omega_{\infty}$ if $U_{\infty} = \emptyset$ and the word $ \omega_{\infty}^{(\tilde{a})} $ with arbitrary $\widetilde{a} \in \alphabEv $ if $U_{\infty} \neq \emptyset$. Then $\omega$ is periodic if and only if $\card{ \alphabEv } = 1$ holds.
\end{prop}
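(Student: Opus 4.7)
The proof will handle the two directions separately. The forward direction, $\card{\alphabEv} = 1 \Rightarrow \omega$ periodic, follows easily from the observation that once $(a_k)$ becomes constant, every subsequent step of the construction inserts the same letter. Concretely, suppose $\alphabEv = \{\tilde a\}$ and let $\widetilde K$ be such that $a_k = \tilde a$ for all $k \geq \widetilde K$. Each filling step $k \geq \widetilde K$ inserts $\tilde a$ at the positions $U_{k-1} \setminus U_k$, and if $U_\infty \neq \emptyset$ the single undetermined point is also filled with $\tilde a$ (the only option from $\alphabEv$). Hence $\omega$ arises from $\omega_{\widetilde K}$ by replacing every position of $U_{\widetilde K}$ by the single letter $\tilde a$. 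By Proposition~\ref{prop:OmegaN}, both $\omega_{\widetilde K}$ and its undetermined part $U_{\widetilde K}$ have period $n_0 \cdots n_{\widetilde K}$; since filling holes by a single letter preserves periodicity, $\omega$ itself has period $n_0 \cdots n_{\widetilde K}$.

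For the converse, the idea is to exhibit two almost identical finite words with different middle letters, both occurring as subwords of $\omega$, which is incompatible with periodicity. Assume $\card{\alphabEv} \geq 2$ and suppose, for contradiction, that $\omega$ has period $P > 0$. Since $\length{\block{k}} + 1 = n_0 \cdots n_k \to \infty$, choose $k$ with $\length{\block{k}} \geq P$; then choose indices $m_1, m_2 > k$ with $a_{m_1} \neq a_{m_2}$, which is possible because at least two letters occur infinitely often in $(a_k)$. From the recursion $\block{j+1} = \block{j}\, a_{j+1}\, \block{j} \cdots \block{j}$, a short induction shows that every $\block{j}$ with $j \geq k$ begins and ends with $\block{k}$. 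Consequently, for each $i \in \{1,2\}$ the word $u_i := \block{k}\, a_{m_i}\, \block{k}$ appears as a subword of $\block{m_i}$: namely the suffix $\block{k}$ of the first copy of $\block{m_i - 1}$ inside $\block{m_i}$, followed by $a_{m_i}$, followed by the prefix $\block{k}$ of the second copy. Each $\block{m_i}$ itself appears between two consecutive holes of $\omega_{m_i}$, and those positions remain fixed in the limit $\omega$, so $u_1$ and $u_2$ are subwords of $\omega$. In fact they have infinitely many occurrences, so the potential single position in $U_\infty$ causes no issue regardless of the choice of $\tilde{a}$.

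The two subwords share common length $L := 2\length{\block{k}} + 1 > P$ and identical first $\length{\block{k}} \geq P$ letters, namely those of $\block{k}$; yet they differ at the middle position $\length{\block{k}}$, where $a_{m_1} \neq a_{m_2}$. On the other hand, in the $P$-periodic word $\omega$ any subword $v$ satisfies $v(i + P) = v(i)$ whenever both sides are defined, so $v$ is completely determined by its length together with its first $P$ letters. This forces $u_1 = u_2$, the desired contradiction. The main technical point is to verify carefully that $u_1$ and $u_2$ really do occur in $\omega$; this is purely combinatorial and reduces to the recursion for $\block{j+1}$ and the fact that each block $\block{m}$ appears infinitely often in $\omega$ (so an occurrence avoiding $U_\infty$ is always available).
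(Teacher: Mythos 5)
Your forward direction coincides with the paper's: once the coding sequence is eventually constant equal to $\widetilde{a}$, the word $\omega$ is obtained from the periodic word $\omega_{\eventNr}$ by replacing every remaining hole (and, if necessary, the single position of $U_{\infty}$) by $\widetilde{a}$, which preserves the period $n_{0}\cdots n_{\eventNr}$. For the converse you take a genuinely different, subword-based route. The paper (following Proposition~6.2 of the cited work of Qu et al.) argues with positions: it picks two holes $j_{\widetilde{a}}, j_{\widetilde{b}} \in U_{k_{0}}$ of $\omega_{k_{0}}$ that get filled with distinct letters, observes that $j_{\widetilde{a}}+p$ and $j_{\widetilde{b}}+p$ are \emph{determined} positions of $\omega_{k_{0}}$ whose difference is a multiple of its period $\length{\block{k_{0}}}+1$, and chains the resulting equalities into $\widetilde{a}=\widetilde{b}$. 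You instead exhibit the two subwords $\block{k}\,a_{m_{1}}\,\block{k}$ and $\block{k}\,a_{m_{2}}\,\block{k}$ of $\omega$ and use that a subword of a $P$-periodic word satisfies $v(i+P)=v(i)$ and is hence determined by its first $P$ letters. Both arguments are correct and exploit the same structural fact, namely that the letter sitting between two consecutive $\block{k}$-blocks is not determined by the blocks themselves; your version is phrased entirely in terms of the language of $\omega$ and so needs the (easily verified, and correctly verified by you) occurrence of $\block{k}\,a_{m_{i}}\,\block{k}$ inside $\block{m_{i}}$, whereas the paper's version avoids any occurrence argument at the cost of bookkeeping with undetermined positions. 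The only cosmetic slip in your write-up is that, in one-based indexing, $u_{1}$ and $u_{2}$ differ at position $\length{\block{k}}+1$ rather than $\length{\block{k}}$; this does not affect the argument since $\length{\block{k}}+1-P\geq 1$ still places the forced repetition inside the common prefix $\block{k}$.
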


\begin{proof}
First assume that $ \alphabEv $ contains only a single element $\widetilde{a}$. Then $ a_{k} = \widetilde{a} $ holds for all $ k \geq \eventNr $ and thus, every hole in $\omega_{ \eventNr - 1 }$ is filled with the letter $\widetilde{a}$ to obtain $\omega$. In the case of $U_{\infty} \neq \emptyset$, the remaining undetermined position is filled with the letter $\widetilde{a}$ as well. Since $\omega_{ \eventNr - 1 }$ is a periodic word over $ \alphab \cup \{ ? \} $, replacing every letter $?$ by the letter $\widetilde{a}$ yields a periodic word $\omega$.

To prove the converse, we follow the proof of Proposition~6.2 in \cite{QRWX_PatCompDDim}. Assume that $\card{ \alphabEv } \geq 2$ holds and that there exists a period $p$ of $\omega$. Choose two letters $\widetilde{a} \neq \widetilde{b}$ that appear infinitely often in the coding sequence and let $k_{0} \in \mathbb{N}$ be large enough such that $ p < \length{ \block{k_{0}} } + 1 $ holds. Let $j_{\widetilde{a}}, j_{\widetilde{b}} \in U_{k_{0}}$ be two positions that are undetermined in $\omega_{k_{0}}$, such that $\omega(j_{\widetilde{a}}) = \widetilde{a}$ and $\omega(j_{\widetilde{b}}) = \widetilde{b}$ hold. Recall that $\omega_{k_{0}}$ and in particular its undetermined part $U_{k_{0}}$ are periodic with period $\length{ \block{k_{0}} } + 1$. Thus $j_{\widetilde{a}} - j_{\widetilde{b}}$ is a multiple of $ \length{ \block{k_{0}} } + 1$ and we obtain the following contradiction:
\begin{align*}
\widetilde{a} &= \omega( j_{\widetilde{a}} ) & & \\
&= \omega( j_{\widetilde{a}} + p ) &  & \text{since } p \text{ is a period of } \omega \\
&= \omega_{k_{0}}( j_{\widetilde{a}} + p ) &  & \text{since } j_{\widetilde{a}} \in U_{k_{0}} \text{ and } p < \length{ \block{k_{0}} } + 1 \text{ imply } j_{\widetilde{a}} +p \notin U_{k_{0}} \\
&= \omega_{k_{0}}( j_{\widetilde{b}} + p ) &  & \text{since } ( j_{\widetilde{a}} + p) - ( j_{\widetilde{b}} + p ) \text{ is a multiple of } \length{ \block{k_{0}} } + 1 \\
&= \omega( j_{\widetilde{b}} + p ) & & \text{since } j_{\widetilde{b}} \in U_{k_{0}} \text{ and } p < \length{ \block{k_{0}} } + 1 \text{ imply } j_{\widetilde{b}} +p \notin U_{k_{0}} \\
&= \omega( j_{\widetilde{b}} ) & & \text{since } p \text{ is a period of } \omega \\
&= \widetilde{b}\, . & & \qedhere
\end{align*}
\end{proof}

\subsection{Subshifts of Simple Toeplitz Words}
\label{subsec:SimToepSub}

For the remainder of this section, our focus changes from elements $\omega \in \alphab^{\mathbb{Z}} $ to subsets $\subshift \subseteq \alphab^{\mathbb{Z}}$. We will define the shift map as well as subshifts and state some properties of (elements of) subshifts associated to simple Toeplitz words.

In the previous subsection, we defined a topology on $\alphab^{\mathbb{Z}}$. With respect to this topology, the (left-)shift, defined by
\[ \Shift : \alphab^{\mathbb{Z}} \to \alphab^{\mathbb{Z}} \quad \text{with} \quad (\Shift \omega) (j) := \omega( j+1 ) \, , \]
is a homeomorphism. A closed subset of $\alphab^{\mathbb{Z}}$ which is invariant under the shift $\Shift$, is called a subshift. A subshift $\subshift \subseteq \alphab^{\mathbb{Z}}$ is called minimal if the $\Shift$-orbit of every element $ \omega \in \subshift $ is dense in $\subshift$. It is called uniquely ergodic if there is a unique $\Shift$-invariant Borel probability measure on $\subshift$. For a (finite of infinite) word $\omega$, we use $\langu{\omega}$ to denote the set of all finite subwords that occur in $\omega$. The empty word of length zero is considered to be a subword of every $\omega$. For a subshift $\subshift$, we define its language as $ \langu{ \subshift } := \cup_{\omega \in \subshift} \langu{ \omega } $.

To a two-sided infinite word $ \omega $ we associate a subshift $ \subshift_{\omega} := \overline{ \{ \Shift^{k} \omega : k \in \mathbb{N} \} } $ by taking the closure of the orbit of $\omega$ under the shift. For the remainder of this paper, we will always assume that $\subshift$ is the subshift associated to a simple Toeplitz word $\omega$, which is defined by the sequences $ (a_{k})_{k \in \mathbb{N}_{0}} $, $ (n_{k})_{k \in \mathbb{N}_{0}} $ and $ (r_{k})_{k \in \mathbb{N}_{0}} $. Such a word $\omega$ is the limit of a sequence of periodic words $\omega_{k}$, where $\omega_{k+1}$ is a completion of $\omega_{k}$ (that is, some holes are filled and non-hole positions remain unchanged). If $\omega$ is a normal Toeplitz word, then for every position $j \in \mathbb{Z}$ there exists an index $k$ such that $ \omega_{k}(j) \in \alphab $ holds. In addition we have seen in Proposition~\ref{prop:OmegaN} that there is precisely one hole per period in $\omega_{k}$ and thus
\[ \frac{\text{number of undetermined positions in } \omega_{k} \text{ per period}}{\text{length of the period of } \omega_{k}} \xrightarrow{k \to \infty} 0 \]
holds. Toeplitz words with these properties are called regular. Their associated subshift is always minimal and uniquely ergodic (see the corollary to Theorem~5 in \cite{JacobsKeane_01Toeplitz} for Toeplitz words over two letters and see e.g. \cite{Downa_OdomToepl} for Toeplitz words over arbitrary finite alphabets). In particular, the subshifts associated to normal Toeplitz words are minimal and uniquely ergodic.

When we construct a simple Toeplitz word $\omega$ with coding sequences $(a_{k})$, $(n_{k})$ and $(r_{k})$, the sequence $(r_{k})$ describes into which hole to map the origin. Therefore a change of $r_{k}$ causes a shift of the resulting word. Since we construct the subshift by taking the orbit closure of $\omega$, such a shift of the word does not change $\subshift$. This was made precise in \cite{LiuQu_Simple}, Proposition~2.3:

\begin{prop}[\cite{LiuQu_Simple}]
\label{prop:DifferentR}
If two simple Toeplitz words $\omega, \widetilde{\omega}$ have the coding sequences $(a_{k})$, $(n_{k})$, $(r_{k})$ and $(a_{k})$, $(n_{k})$, $(\widetilde{r}_{k})$ respectively, then the associated subshifts $\subshift_{\omega}, \subshift_{\widetilde{\omega}}$ are equal. 
\end{prop}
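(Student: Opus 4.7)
My plan is to prove the equality of the two subshifts by first establishing that $\omega$ and $\widetilde{\omega}$ have the same language $\langu{\omega} = \langu{\widetilde{\omega}}$, and then deducing $\subshift_\omega = \subshift_{\widetilde{\omega}}$ from the language identity. Intuitively, changing $(r_k)$ only shifts the approximating periodic words, not their local structure, so the languages should coincide and the orbit closures should agree.

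For the language step, the starting observation is that, by Proposition~\ref{prop:OmegaN}, the approximating words $\omega_k$ and $\widetilde{\omega}_k$ are periodic with the same period $N_k := n_0 \cdots n_k$ and share the same building block $\block{k}$ between consecutive holes, the latter depending only on $(a_j), (n_j)$. Their undetermined parts are the cosets $N_k \mathbb{Z} + s_k$ and $N_k \mathbb{Z} + \widetilde{s}_k$, where $s_k := r_0 + \sum_{j=1}^k r_j n_0 \cdots n_{j-1}$ and $\widetilde{s}_k$ is defined analogously; hence $\widetilde{\omega}_k = \Shift^{s_k - \widetilde{s}_k} \omega_k$, and $\omega_k, \widetilde{\omega}_k$ share the same finite hole-free subwords. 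I then claim that, for every $L$ and every sufficiently large $k$, the words of length $L$ in $\langu{\omega}$ are exactly the subwords of $\block{k}\,a\,\block{k}$ over $a \in \alphab_{k+1}$: the inclusion ``$\supseteq$'' holds because for each such $a = a_j$ with $j \geq k+1$, the pattern $\block{j-1}\,a_j\,\block{j-1}$ appears in $\omega$ once the level-$j$ filling is executed, and $\block{k}$ is a sub-block of $\block{j-1}$ by the recursion; while ``$\subseteq$'' follows because any length-$L$ subword of $\omega$ with $L \leq \length{\block{k}}+1$ meets the level-$k$ hole lattice at most once, with the hole ultimately filled by a letter from $\alphab_{k+1}$. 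Since this characterisation depends only on $(a_j), (n_j)$, the identity $\langu{\omega} = \langu{\widetilde{\omega}}$ follows.

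From the language identity, each central subword $\widetilde{\omega}(-L) \cdots \widetilde{\omega}(L)$ of $\widetilde{\omega}$ occurs in $\omega$ at some position, whence there exists $m_L \in \mathbb{Z}$ with $\Shift^{m_L}\omega$ agreeing with $\widetilde{\omega}$ on $[-L, L]$. Letting $L \to \infty$ yields $\widetilde{\omega} \in \subshift_\omega$, and since $\subshift_\omega$ is closed and shift-invariant, $\subshift_{\widetilde{\omega}} \subseteq \subshift_\omega$; interchanging the roles of $(r_k)$ and $(\widetilde{r}_k)$ gives the reverse inclusion. The main subtlety I expect to handle is the extended case $U_\infty \neq \emptyset$, where one arbitrarily chosen letter $\widetilde{a} \in \alphabEv$ is inserted at the unique remaining hole, potentially creating a local configuration not visible in any $\omega_k$. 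The resolution is that $\widetilde{a}$ occurs as $a_j$ for infinitely many $j$, so for each sufficiently large such $j$ the natural level-$j$ filling inserts $\widetilde{a}$ at many positions whose surrounding $\block{k}$-pattern is identical to the one around the artificially filled hole, reproducing every such configuration elsewhere in $\omega$.
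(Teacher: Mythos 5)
Your argument is correct. Note that the paper itself offers no proof of this statement: it is imported verbatim from Proposition~2.3 of the cited work of Liu and Qu, accompanied only by the heuristic that changing $(r_{k})$ ``shifts the word'' --- which is not literally true of the limit words (a change of $(r_{k})$ can alter whether a residual hole exists at all), so a real argument is needed and yours supplies one. Your route --- show that for $L \leq \length{\block{k}}+1$ the length-$L$ subwords are exactly the subwords of $\block{k}\,a\,\block{k}$ with $a \in \alphab_{k+1}$, conclude that the language depends only on $(a_{k})$ and $(n_{k})$, and then pass from equality of languages to equality of orbit closures --- is essentially the same machinery the paper deploys later: your characterisation of the language is Proposition~\ref{prop:enthalten}, and your handling of the artificially filled hole (reproducing the configuration at the infinitely many levels $j$ with $a_{j}=\widetilde{a}$) is exactly the argument in the paper's proof that $\widehat{\subshift}_{\omega} = \subshift_{\omega}$; indeed, the proposition follows immediately from that later result, since $\widehat{\subshift}_{\omega}$ manifestly depends only on $(a_{k})$ and $(n_{k})$. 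Two small points to tighten: since $\subshift_{\omega}$ is defined as the closure of the \emph{forward} orbit $\{\Shift^{k}\omega : k \in \mathbb{N}\}$, you should remark that every word in $\langu{\omega}$ occurs at arbitrarily large positive positions (it sits inside some $\block{k}$-block, or in the extended case inside some $\block{j-1}a_{j}\block{j-1}$ with $a_{j}=\widetilde{a}$, and these repeat syndetically), so the shifts $m_{L}$ can be chosen in $\mathbb{N}$; and the relation $\widetilde{\omega}_{k} = \Shift^{s_{k}-\widetilde{s}_{k}}\omega_{k}$, while true, is not actually needed once you have the block characterisation.
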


Therefore, we will from now on omit $(r_{k})$ and speak about the subshift generated by the sequences $(a_{k}) $ and $ (n_{k}) $. In addition, we can now extend minimality and unique ergodicity to the subshifts that are associated to arbitrary simple Toeplitz words. The underlying idea is that, if necessary, we can change $(r_{k})$ into a sequence $(\widetilde{r}_{k})$ such that $\widetilde{\omega}$ is a normal simple Toeplitz word, cf. \cite{LiuQu_Simple}, Corollary~2.1.

\begin{prop}[\cite{LiuQu_Simple}]
For every simple Toeplitz word $\omega$, the subshift $\subshift_{\omega}$ is minimal and uniquely ergodic.
\end{prop}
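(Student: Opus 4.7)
The plan is to reduce everything to the case of a normal (i.e., regular) Toeplitz word, for which minimality and unique ergodicity are already in the literature (Jacobs--Keane for two letters, Downarowicz for the general case, both cited in the paragraph preceding Proposition~\ref{prop:DifferentR}). The paragraph above also notes that any normal simple Toeplitz word is automatically regular: since $\omega_k$ has exactly one hole per period of length $n_0 \cdot \hdots \cdot n_k$ (Proposition~\ref{prop:OmegaN}) and the period tends to infinity (because $n_k \geq 2$), the density of undetermined positions tends to zero. Hence the normal case is settled by invoking these known results.

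For the extended case, let $\omega = \omega_\infty^{(\widetilde{a})}$ with $U_\infty \neq \emptyset$, so that $U_\infty = \{ m \}$ for some $m \in \mathbb{Z}$. The idea is to replace the coding sequence $(r_k)$ by a new sequence $(\widetilde{r}_k)$ such that the resulting undetermined sets $\widetilde{U}_k$ satisfy $\bigcap_{k \geq 0} \widetilde{U}_k = \emptyset$, and then apply Proposition~\ref{prop:DifferentR}. Concretely, recall from Proposition~\ref{prop:OmegaN} that
\[ U_k = n_0 \cdot \hdots \cdot n_k \mathbb{Z} + s_k, \qquad s_k := r_0 + \sum_{j=1}^{k} r_j \, n_0 \cdot \hdots \cdot n_{j-1}. \]
Since $n_k \geq 2$ for every $k$, we can pick $\widetilde{r}_k \in \{0, \hdots, n_k - 1\}$ inductively so that the corresponding residues $\widetilde{s}_k$ satisfy, say, $\widetilde{s}_k \geq k$. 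Then the unique element of $\widetilde{U}_k$ lying in $\{0, 1, \hdots, n_0 \cdot \hdots \cdot n_k - 1\}$ tends to infinity, which forces $\bigcap_{k} \widetilde{U}_k = \emptyset$. The corresponding simple Toeplitz word $\widetilde{\omega}$ is thus normal, and by the preceding proposition (applied to the same $(a_k), (n_k)$) it is not periodic, since $\card{\widetilde{\alphab}} \geq 2$ was assumed throughout.

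Now Proposition~\ref{prop:DifferentR} gives $\subshift_\omega = \subshift_{\widetilde{\omega}}$, and the normal case shows that $\subshift_{\widetilde{\omega}}$ is minimal and uniquely ergodic, proving the claim. The only real point of care is the construction of $(\widetilde{r}_k)$; I expect this to be a short and elementary step, the genuine work having already been packaged into the classical theorem on regular Toeplitz subshifts and into Proposition~\ref{prop:DifferentR}.
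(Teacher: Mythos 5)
Your overall strategy is exactly the one the paper has in mind: the paper states this result as a citation of \cite{LiuQu_Simple} and only sketches the argument, namely that normal simple Toeplitz words are regular (so the Jacobs--Keane/Downarowicz results apply) and that in the extended case one may pass to a coding $(\widetilde{r}_{k})$ producing a normal word and then invoke Proposition~\ref{prop:DifferentR}. So your reduction is the intended one.

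However, the one step you actually flesh out contains a genuine error. Writing $N_{k} := n_{0} \cdot \hdots \cdot n_{k}$, arranging $\widetilde{s}_{k} \geq k$ (so that the representative of $\widetilde{U}_{k}$ in $\{0, \hdots, N_{k}-1\}$ tends to infinity) only shows that no \emph{non-negative} integer lies in $\bigcap_{k} \widetilde{U}_{k}$. A negative integer $m$ with $|m| \leq N_{k}$ belongs to $\widetilde{U}_{k}$ if and only if $\widetilde{s}_{k} = N_{k} - |m|$, and this is perfectly compatible with $\widetilde{s}_{k} \to \infty$. Concretely, the choice $\widetilde{r}_{k} = n_{k}-1$ for all $k$ gives $\widetilde{s}_{k} = N_{k}-1 \geq k$, yet $-1 \in \widetilde{U}_{k}$ for every $k$, so $\bigcap_{k} \widetilde{U}_{k} = \{-1\} \neq \emptyset$ and the resulting word is still extended. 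The fix is short: you must force both $\widetilde{s}_{k} \to \infty$ and $N_{k} - \widetilde{s}_{k} \to \infty$ (both sequences are non-decreasing), which is achieved for instance by choosing $\widetilde{r}_{k} \neq 0$ for infinitely many $k$ and $\widetilde{r}_{k} \neq n_{k}-1$ for infinitely many $k$; this is possible since $n_{k} \geq 2$. With that correction the remainder of your argument (non-periodicity from $\card{\alphabEv} \geq 2$, regularity of the normal word, and Proposition~\ref{prop:DifferentR}) goes through as written.
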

 
In Proposition~2.4 in \cite{LiuQu_Simple} it as was shown that, in a certain sense, the converse of Proposition~\ref{prop:DifferentR} holds as well:

\begin{prop}[\cite{LiuQu_Simple}]
If the subshift $\subshift$ is associated to a simple Toeplitz word $\omega$ with coding sequences $(a_{k})$, $(n_{k}) $ and $ (r_{k})$, then every element $ \widetilde{\omega} \in \subshift_{\omega} $ is a simple Toeplitz word with coding sequences $ (a_{k}) , (n_{k}) $ and $ (\widetilde{r}_{k})$.
\end{prop}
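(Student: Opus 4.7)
The plan is to reduce the statement to a compactness argument. First I show that every shift $\Shift^m \omega$ is itself a simple Toeplitz word whose coding retains the letters $(a_k)$ and periods $(n_k)$, only the offsets $(r_k)$ being modified; then I extract coding data for an arbitrary $\widetilde{\omega} \in \subshift_\omega$ by a diagonal argument, using that each $r_k$ lives in the finite set $\{0, \ldots, n_k - 1\}$.

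For the first step, fix $m \in \mathbb{Z}$. By Proposition~\ref{prop:OmegaN}, $\omega_k$ has period $n_0 \cdots n_k$ and undetermined part $U_k = n_0 \cdots n_k \mathbb{Z} + \rho_k$ with $\rho_k := r_0 + \sum_{j=1}^{k} r_j \, n_0 \cdots n_{j-1}$. Hence $\Shift^m \omega_k$ has the same period, with shifted offset $\rho_k^{(m)} := (\rho_k - m) \bmod n_0 \cdots n_k$, and its mixed-radix expansion in bases $n_0, \ldots, n_k$ uniquely determines digits $r_0^{(m)}, \ldots, r_k^{(m)}$ with $0 \le r_j^{(m)} < n_j$. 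These digits are consistent across levels: $\rho_{k+1} \equiv \rho_k \pmod{n_0 \cdots n_k}$ (immediate from the definition) yields $\rho_{k+1}^{(m)} \equiv \rho_k^{(m)} \pmod{n_0 \cdots n_k}$, so the expansion at level $k+1$ extends that at level $k$. Thus $(r_k^{(m)})_{k \ge 0}$ is well defined and $\Shift^m \omega$ is the simple Toeplitz word with coding $(a_k), (n_k), (r_k^{(m)})$.

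For the second step, write $\widetilde{\omega} = \lim_j \Shift^{m_j} \omega$ with $m_j \in \mathbb{Z}$. By Step~1, each $\Shift^{m_j} \omega$ has a coding $(a_k), (n_k), (r_k^{(j)})$. A diagonal extraction yields a subsequence (still indexed by $j$) and values $\widetilde{r}_k \in \{0, \ldots, n_k - 1\}$ such that for every fixed $k$, $r_k^{(j)} = \widetilde{r}_k$ for all large $j$. Let $(\widetilde{\omega}_K)_K$ denote the partial words built from $(a_k), (n_k), (\widetilde{r}_k)$, with undetermined parts $\widetilde{U}_K$. For each $K$, once $j$ is large enough, $\Shift^{m_j} \omega_K = \widetilde{\omega}_K$ as partial words; since $\Shift^{m_j} \omega$ agrees with $\Shift^{m_j} \omega_K$ wherever the latter is determined, convergence in the product topology forces $\widetilde{\omega}$ to agree with $\widetilde{\omega}_K$ at every determined position. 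Letting $K \to \infty$, $\widetilde{\omega}$ agrees with the limit partial word $\widetilde{\omega}_\infty$ wherever it is determined.

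If $\widetilde{U}_\infty := \cap_K \widetilde{U}_K$ is empty, $\widetilde{\omega}$ is a normal simple Toeplitz word with the stated coding; non-periodicity is automatic, since $\omega$ is non-periodic and $\subshift_\omega$ is minimal, hence aperiodic. If $\widetilde{U}_\infty = \{j_0\}$, then for any $K \ge \eventNr$ and large $j$ one has $\Shift^{m_j} \omega_K = \widetilde{\omega}_K$, so $j_0 + m_j \in U_K$; consequently $\omega(j_0 + m_j) = \Shift^{m_j} \omega(j_0)$ is inserted at some level $\ge K$ and therefore lies in $\alphab_K = \alphabEv$, which by discrete convergence gives $\widetilde{\omega}(j_0) \in \alphabEv$. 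Thus $\widetilde{\omega}$ is an extended simple Toeplitz word with coding $(a_k), (n_k), (\widetilde{r}_k)$. The main obstacle is the bookkeeping in Step~1 (verifying that the mixed-radix digits are consistent across all levels); the remainder is a clean compactness argument.
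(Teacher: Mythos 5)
This proposition is stated in the paper as an imported result (Proposition~2.4 of \cite{LiuQu_Simple}) and is given no proof there, so there is nothing internal to compare your argument against; judged on its own, your proof is correct and self-contained. The two-step structure (shifts first, then a diagonal/compactness extraction over the finite sets $\{0,\dots,n_k-1\}$) is sound. Two points deserve to be made explicit. First, in Step~1 you need that the partial word built from the digits $r_0^{(m)},\dots,r_k^{(m)}$ actually \emph{equals} $\Shift^m\omega_k$, not merely that it has the same undetermined part; this follows because the block between two consecutive holes of any such partial word is always $\block{k}$, which depends only on $(a_j)_{j\le k}$ and $(n_j)_{j\le k}$ and not on the offsets, so a partial word of this type is completely determined by its hole set. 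Second, in the case $\widetilde{U}_\infty=\{j_0\}$ your argument that $\omega(j_0+m_j)$ ``is inserted at some level $\ge K$'' should also cover the possibility that $j_0+m_j$ is the single undetermined position $U_\infty$ of $\omega$ itself (if $\omega$ is an extended Toeplitz word), in which case the letter there is $\widetilde{a}\in\alphabEv$ by definition; either way the letter lies in $\alphab_{K+1}=\alphabEv$, so the conclusion stands. Finally, non-periodicity of $\widetilde{\omega}$ should be recorded in the extended case as well as the normal one; your minimality argument applies verbatim, or one can invoke the paper's earlier criterion that a word with this coding is periodic if and only if $\card{\alphabEv}=1$, which is excluded by standing assumption.
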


For every $ \omega \in \subshift $ we can therefore find a sequence of periodic words $\omega_{k} $, as described in Proposition~\ref{prop:OmegaN}, that converge to $\omega$ (with the possible exception of one undetermined position). Thus, for every $k \in \mathbb{N}_{0}$ and every $ \omega \in \subshift $, we can write $\omega$ as 
\[ \omega = \hdots \block{k} \star \block{k} \star \block{k} \star \block{k} \hdots \quad , \]
where $ \star $ denotes elements from $ \alphab_{k+1} = \{ a_{j} : j \geq k+1 \} $. Since all $ \omega \in \subshift $ have the same letters in the sequence $(a_{k})$ and the same period length $(n_{k})$, the $\block{k}$-blocks are the same for all $ \omega \in \subshift $. Based on these blocks, we can give an alternative definition of the subshift associated to a simple Toeplitz word $\omega$: Let $(a_{k})$ and $(n_{k})$ be the coding sequences of $\omega$ and define the $\block{k}$-blocks as in the previous subsection by $ \block{0} = a_{0}^{n_{0}-1}$ and $ \block{k+1} = \block{k} \; a_{k+1} \; \block{k} \; \hdots \; \block{k} $ with $(n_{k+1} - 1)$-times $a_{k+1}$  and $n_{k+1}$-times $\block{k}$. Since $ \block{k} $ is a prefix of $\block{k+1}$ for all $ k \geq 0$, there exits a unique one-sided infinite word $\varpi$ such that $ \block{k} $ is a prefix of $\varpi$ for all $ k \geq 0$. We define a subshift by
\[ \widehat{\subshift}_{\omega} := \{ \varrho \in \alphab^{\mathbb{Z}} : \langu{ \varrho } \subseteq \langu{ \varpi }\} \, .\]

\begin{prop}
For every simple Toeplitz word $\omega$ the equality $ \widehat{\subshift}_{\omega} = \subshift_{\omega} $ holds.
\end{prop}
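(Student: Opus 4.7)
The plan is to prove both inclusions by matching finite languages. For $ \subshift_{\omega} \subseteq \widehat{\subshift}_{\omega} $, I would take $ \varrho \in \subshift_{\omega} $ and show that every finite subword of $\varrho$ lies in $ \langu{ \block{m'} } $ for some $m'$. By the preceding proposition, $\varrho$ is itself a simple Toeplitz word with coding sequences $(a_{k})$ and $(n_{k})$, so for every $m$ it admits a decomposition $ \varrho = \hdots \block{m} \star_{-1} \block{m} \star_{0} \block{m} \star_{1} \block{m} \hdots $ with dividing letters $\star_{i} \in \alphab_{m+1}$. Fix a subword $u$ of length $L$ in $\varrho$ and choose $m$ so large that $ \length{\block{m}} \geq L $. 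Then $u$ either sits inside a single $\block{m}$-block (trivially giving $u \in \langu{\block{m}} \subseteq \langu{\varpi}$) or spans exactly one divider $\star_{i}$. In the latter case this divider equals $a_{m'}$ for some $m' \geq m+1$: either because the position is filled at level $m'$, or because (in the extended case) it is the unique position of $U_{\infty}$ and the inserted letter $\widetilde{a} \in \alphabEv$ coincides with $a_{m'}$ for arbitrarily large $m'$. Thus $u$ is contained in the pattern (suffix of $\block{m}$)$\,a_{m'}\,$(prefix of $\block{m}$). The recursion $ \block{k+1} = \block{k}\,a_{k+1}\,\block{k}\, \hdots \, \block{k} $ makes $\block{m}$ both a prefix and a suffix of $\block{m'-1}$, and $ \block{m'-1}\,a_{m'}\,\block{m'-1} $ is in turn a subword of $\block{m'}$. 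Hence $ u \in \langu{\block{m'}} \subseteq \langu{\varpi} $.

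For $ \widehat{\subshift}_{\omega} \subseteq \subshift_{\omega} $ I would first identify the language. Each $\block{k}$ occurs as a subword of $\omega$ (since $\omega$ admits the same $\block{k}$-decomposition), and any finite subword of $\varpi$ is a subword of some $\block{k}$ because the $\block{k}$ form a nested family of prefixes of $\varpi$. Therefore $ \langu{\varpi} = \bigcup_{k \geq 0} \langu{\block{k}} \subseteq \langu{\omega} $, and combining this with the first inclusion applied to $\varrho = \omega$ yields $ \langu{\omega} = \langu{\varpi} $. Now let $ \varrho \in \widehat{\subshift}_{\omega} $. For every $N \in \mathbb{N}$ the window $ \varrho(-N) \hdots \varrho(N) $ lies in $ \langu{\varpi} = \langu{\omega} $, so there exists $k_{N} \in \mathbb{Z}$ such that $ \Shift^{k_{N}}\omega $ agrees with $\varrho$ on $[-N, N]$. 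By minimality of $ \subshift_{\omega} $ every $ \Shift^{k_{N}}\omega $ belongs to $ \subshift_{\omega} $, the sequence converges to $\varrho$ in the product topology, and closedness of $ \subshift_{\omega} $ gives $ \varrho \in \subshift_{\omega} $.

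The main obstacle is the first inclusion: although a subword of $\varrho$ may straddle a hole whose filling-level $m'$ is arbitrarily large compared to $m$, one must realise the awkward cross-hole window as a subword of a single $\block{m'}$. The recursive relation $ \block{k+1} = \block{k}\,a_{k+1}\,\block{k}\,\hdots\,\block{k} $ does the job by simultaneously ensuring that $\block{m}$ embeds as both the prefix and the suffix of $\block{m'-1}$ and that $\block{m'-1}\,a_{m'}\,\block{m'-1}$ appears in $\block{m'}$. Once this is established, the second inclusion is a routine application of the standard principle that a subshift equals the set of sequences whose language is contained in the language of the subshift.
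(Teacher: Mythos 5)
Your proposal is correct and follows essentially the same route as the paper: both directions rest on the $\block{k}$-block decomposition, the embedding of a window straddling a divider $\star = a_{m'}$ into $\block{m'-1}a_{m'}\block{m'-1} \subseteq \block{m'}$ (with the eventual alphabet handling the undetermined position), and approximation of $\varrho$ by shifts of $\omega$ agreeing on growing windows. The only cosmetic difference is that you verify $\langu{\varrho} \subseteq \langu{\varpi}$ for every $\varrho \in \subshift_{\omega}$ via the proposition that all elements share the coding $(a_{k}), (n_{k})$, whereas the paper checks it only for $\omega$ itself and implicitly uses that $\widehat{\subshift}_{\omega}$ is closed and shift-invariant.
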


\begin{proof}
First we show $\widehat{\subshift}_{\omega} \subseteq \subshift_{\omega} $. Let $ \varrho \in \widehat{\subshift}_{\omega} $ and $ J \in \mathbb{N} $. The finite subword $ \varrho |_{[-J,J]} $ of $\varrho$ is a subword of $\varpi$ as well. Thus it is contained in the prefix $\block{k}$ of $\varpi$ for every sufficiently large $k$. Because of the decomposition $ \omega = \hdots \star \block{k} \star \block{k} \star \hdots $, there exists a $k_{J}$ such that $ \Shift^{k_{J}} \omega |_{[-J,J]} = \varrho |_{[-J,J]} $. We obtain a sequence $(k_{J})$ with $ \varrho = \lim_{J \to \infty} \Shift^{k_{J}} \omega |_{[-J,J]} $ and thus $ \varrho \in \subshift_{\omega}$.

For the converse, first assume that $\omega$ is a normal simple Toeplitz word. Thus, for every $J \in \mathbb{N}$ there exists a number $k_{J}$ such that all positions in $ \omega |_{[-J, J]} $ are determined in the word $\omega_{k_{J}} = \hdots \block{k_{J}} ? \block{k_{J}} ? \block{k_{J}} \hdots \; $ . Hence $ \omega |_{[-J, J]} $ is contained in $\block{k_{J}}$. Now assume that $\omega$ is an extended simple Toeplitz word. Then, for every $J$ there is a number $k$ such that $ \omega |_{[-J, J]} $ is contained in $ \block{k} ? \block{k} $, where $?$ denotes the position of $U_{\infty}$. Let $\widetilde{a} \in \alphabEv$ denote the letter that is filled into $U_{\infty}$. Since $\widetilde{a}$ appears infinitely often in the coding sequence, there exists a number $k_{J} > k $ such that $ a_{k_{J}} = \widetilde{a} $. Now $ \omega |_{[-J, J]} $ is contained in $ \block{ k } \widetilde{a} \block{ k } $, which is contained in  $ \block{ k_{J}-1 } \widetilde{a} \block{ k_{J}-1 } $, which is contained in $ \block{k_{J}} $. Thus, independent of which kind of simple Toeplitz word $\omega$ is, there is a number $k_{J}$ such that $ \omega |_{[-J, J]} $ is contained in a $\block{k_{J}}$-block. Hence every subword of $ \omega $ is a subword of $\varpi$ and we obtain $ \omega \in \widehat{\subshift}_{\omega}$.
\end{proof}

We conclude the section by defining a class simple Toeplitz subshifts that will serve as our main example throughout the whole paper. As mentioned in the introduction, one motivation for the study of simple Toeplitz subshifts is their connection to self similar groups: For the family of Grigorchuk's groups, the orbital Schreier graphs have a similar structure as Toeplitz words (see \cite{GLN_SpectraSchreierAndSO}). Essentially, in each step every second ``hole'' is filled. In the first step the filling always corresponds to the generator $a$ and in the $k$-th step ($k \geq 2$) it corresponds to the value of $ \omega_{k} \in \{ \pi_{b}, \pi_{c}, \pi_{d} \} $. Thus we consider the following simple Toeplitz subshift:

\begin{exmpl}
Following convention, we consider the four letter alphabet $ \alphab = \{ a, x, y, z \} $. We are interested in the subshifts defined by the constant sequence $ (n_{k})_{k \in \mathbb{N}_{0}} = (2, 2, 2, \hdots )$ and a sequence $(b_{k}) \in \alphab^{\mathbb{N}_{0}} $ with $ b_{0} = a $ and $ b_{k} \in \{ x, y, z \} $ for $ k \geq 1 $. Here $b_{k} = b_{k+1}$ is allowed, but otherwise the same construction is used as in the simple Toeplitz case. In particular, we assume that $ (b_{k})$ is not eventually constant. Via $ x \leftrightarrow \pi_{d} \, ,\; y \leftrightarrow \pi_{c} \, ,\; z \leftrightarrow \pi_{b} $ the sequence $b_{k}$ corresponds to a sequence $\omega_{k}$, which defines an element in the family of Grigorchuk's groups. Within the scope of this text, we will therefore call these subshifts \emph{generalized Grigorchuk subshifts}, in contrast to the (standard) Grigorchuk subshift that is defined in the example below. Note however that these notions are not standard terminology. To describe the subshifts in accordance with our definition of simple Toeplitz subshifts, we have to express subsequent occurrences of the same letter $(b_{k})$ as a single letter $a_{m}$ with associated period length $n_{m} \geq 2$. Assume $ b_{k} \neq b := b_{k+1} = \hdots = b_{k+j} \neq b_{k+j+1} $ and note that
\[ \underbrace{(b ?)^{\infty} \triangleleft \hdots \triangleleft (b ?)^{\infty}}_{j \text{ times}} = ( \underbrace{b \hdots b}_{2^{j}-1 \text{ times}} ?)^{\infty} =  ( b^{2^{j}-1} ?)^{\infty} \]
holds. Hence the generalized Grigorchuk subshifts are precisely those simple Toeplitz subshifts with $ \alphab = \{ a, x, y, z \} $, $ a_{0} = a $ and $ a_{k} \in \{ x, y, z \} $ for $k \geq 1$, where $n_{0} = 2$ and for every $k \in \mathbb{N}$ there exists a number $j_{k} \geq 1$ such that $ n_{k} = 2^{j_{k}} $ holds. Their block length is given by 
\[ \length{\block{k}} + 1 = 2^{j_{k}} \cdot ( \length{\block{k-1}} + 1 ) = \hdots = 2^{ 1 + j_{1} + \hdots + j_{k} } \, .\]
\end{exmpl}

As a special case we will sometimes consider the subshift that corresponds to Grigorchuk's group. This group is obtained when $(\omega_{k})_{k}$ is the periodic sequence  $ \omega = (\pi_{d}, \pi_{c}, \pi_{b} , \hdots ) $:

\begin{exmpl}
As above, consider $ \alphab = \{ a, x, y, z \} $. Let $ (a_{k})_{k \in \mathbb{N}_{0}} = ( a, x, y, z, x, y, z, \hdots ) $ be 3-periodic from $a_{1}$ on and let $ (n_{k})_{k \in \mathbb{N}_{0}} = (2, 2, 2, \hdots )$ be the constant sequence with value two. The associated subshift $\widehat{\subshift}$ is precisely the subshift which is linked to Grigorchuk's group (cf. \cite{GLN_SpectraSchreierAndSO}) and we will refer to it as \emph{Grigorchuk subshift}. The length of the $\block{k}$-blocks is given by $\length{\block{k}} + 1 = 2^{k+1}$. Moreover $ \alphabEv = \{ x, y, z \} $ and $ \eventNr = 1 $ hold. 
\end{exmpl}

Note that our notion of a generalized Grigorchuk subshift includes as a special instance what is called an \emph{$l$-Grigorchuk subshift} in \cite{DKMSS_Regul-Article}. These are the subshifts that are obtained from $ \alphab = \{ a,x,y,z \} $, $ (a_{k}) = (a, x, y, z, x, y, z, \hdots) $ and $ (n_{k}) = (2, 2^{l_{1}}, 2^{l_{2}}, 2^{l_{3}}, \hdots) $ with $ l_{1}, l_{2}, l_{3}, \hdots \in \mathbb{N} $.


\section{Subword Complexity}
\label{sec:SubwordComp}

The aim of this section is to give an explicit formula for the complexity function of a simple Toeplitz subshift. For a different subclass of Toeplitz words this problem has for example been studied in \cite{CassKar_ToeplWords}. There, one-sided infinite words are studied that are obtained by a hole filling procedure from a single word with holes. This word is repeatedly inserted into itself and it is shown that the complexity of the obtained words grows like a polynomial. In addition it is shown in \cite{Kosk_ComplSuites} with a more general construction procedure, that for every rational number $r \in \mathbb{Q} $ a Toeplitz word can be obtained such that $\comp( L )$ grows like $L^{r}$. While both works cover cases that are much more general, they also provide bounds for the special case of a single hole per period: Theorem 5 in \cite{CassKar_ToeplWords} implies that the complexity is dominated by a linear function if one word with holes is repeatedly inserted into itself. Theorem 9 in \cite{Kosk_ComplSuites} implies that this is also the case for a sequence of periodic words with holes instead of a single word, provided that the length of the periods is bounded. In the following, we provide an explicit formula for the complexity of simple Toeplitz words. Our result imply that the complexity is in this case still dominated by a linear function (cf. Proposition~\ref{prop:MinMaxQuotient}).

Our main strategy is similar to the one that was employed in \cite{GLN_Combinat} for the special case of the Grigorchuk subshift (see arxiv version of \cite{GLN_Survey} as well): First we prove an upper bound for the complexity at certain points. Then we prove a lower bound for the growth rate of the complexity function. Together, these inequalities determine the complexity. The same technique was also employed in \cite{DKMSS_Regul-Article} to obtain the complexity of $l$-Grigorchuk subshifts.

\subsection{Inequalities for the Complexity and its Growth}

In the following we will denote the cardinality of a set $A$ by $ \card{ A } $, its characteristic function by $\mathbbm{1}_{A}$ and its complement by $\cmplmt{A}$. Recall that $\langu{\subshift}$ denotes the language of the subshift $\subshift$, that is, the set of all finite words which occur in elements of $\subshift$. The complexity function counts how many words of a given finite length there are and thus measures (one aspect of) how ordered the subshift is. It is defined as
\[ \comp : \mathbb{N}_{0} \rightarrow \mathbb{N} \; , \quad L \mapsto \card{ \{ u \in \langu{ \subshift } : \length{u} = L \} } \, .\]
Since the empty word is always considered to be an element of the language, we have $\comp(0) = 1$. Moreover, we define the growth rate of the complexity as $ \growth( n ) := \comp( n+1 ) - \comp( n ) $ for $ n \in \mathbb{N}_{0}$. First we establish an upper bound for the complexity at the lengths $\length{\block{k}}+1$ with $k \geq 0$.

\begin{prop}
\label{prop:enthalten}
For every word $ u \in \langu{\subshift} $ of length $ |u| \leq \length{\block{k}}+1 $ there is a letter $ a \in \alphab_{k+1} $ such that $u$ is a subword of $ \block{k} a \block{k} $.
\end{prop}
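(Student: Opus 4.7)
The plan is to exploit the block decomposition already established at the end of Subsection 2.2, namely that every element $\omega \in \subshift$ admits the form
\[ \omega = \ldots \block{k} \star \block{k} \star \block{k} \star \block{k} \ldots \]
where each separator $\star$ is a letter from $\alphab_{k+1}$. Since any $u \in \langu{\subshift}$ appears as a finite factor of some $\omega \in \subshift$, it suffices to locate $u$ inside this decomposition.

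The key observation is a length count. The distance between two consecutive separators in the above decomposition is exactly $\length{\block{k}} + 1$ positions. Hence, if $u$ contained two separator letters, then $u$ would need to span at least one full $\block{k}$-block plus the two separators plus at least one further letter on each side, giving $\length{u} \geq \length{\block{k}} + 2$. Under the assumption $\length{u} \leq \length{\block{k}} + 1$, this is impossible, so $u$ contains at most one separator position.

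I would then split into the two remaining cases. If $u$ contains no separator, it is entirely contained in some $\block{k}$-block and hence, trivially, is a subword of $\block{k} a \block{k}$ for any choice of $a \in \alphab_{k+1}$ (note $\alphab_{k+1}$ is nonempty since $\alphabEv \subseteq \alphab_{k+1}$ and $\card{\alphabEv} \geq 2$ by the assumptions made earlier). If $u$ contains exactly one separator $a$, then $u$ decomposes as $u = s \, a \, t$, where $s$ is a suffix of a $\block{k}$-block and $t$ is a prefix of the next $\block{k}$-block. Since these blocks are identical (both equal $\block{k}$), $u$ is a subword of $\block{k} a \block{k}$, and by construction $a \in \alphab_{k+1}$.

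There is no real obstacle here; the only thing to be careful about is the bookkeeping in the length count that rules out two separators, and making sure to cover the degenerate cases where $u$ is empty or where $s$ or $t$ is empty (both of which are immediate). The alternative definition $\subshift = \widehat{\subshift}_{\omega}$ in terms of subwords of $\varpi$ could also be used, but the block decomposition of elements of $\subshift$ makes the argument most transparent.
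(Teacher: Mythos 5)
Your proof is correct and follows the same route as the paper, which simply observes that the claim is immediate from the decomposition $\omega = \hdots \block{k} \star \block{k} \star \block{k} \hdots$ with $\star \in \alphab_{k+1}$; you have merely spelled out the length count and case split that the paper leaves implicit.
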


\begin{proof}
This follows immediately from the decomposition $ \omega = \hdots \block{k} \star \block{k} \star \block{k} \star \block{k} \hdots $ with letters $ \star \in \alphab_{k+1} $, which exists for all $ \omega \in \subshift $ and all $ k \geq 0 $.
\end{proof}

\begin{prop}
\label{prop:obereSchrComp}
For all $ k \geq 0 $ the following inequality holds:
\[ \comp( \length{\block{k}} +1 ) \leq ( \card{ \alphab_{k} } - 1 ) \cdot (\length{\block{k}}+1) + \mathbbm{1}_{\alphab_{k+1}}(a_{k}) \cdot ( \length{\block{k-1}}+1 ) \, .\]
\end{prop}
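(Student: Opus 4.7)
The plan is to count length-$(\length{\block{k}}+1)$ factors in $\langu{\subshift}$ by partitioning them according to the unique letter from $\alphab_{k+1}$ that they contain, and then to sharpen the trivial bound in the single ``exceptional'' case by exploiting a hidden periodicity.

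First I would apply Proposition~\ref{prop:enthalten}: every $u \in \langu{\subshift}$ with $\length{u} = \length{\block{k}}+1$ occurs inside some $\block{k}\,c\,\block{k}$ with $c \in \alphab_{k+1}$. Writing $m := \length{\block{k}}$ and $p := \block{k}$, a direct check of the $m+1$ starting positions in $pcp$ shows that the length-$(m+1)$ factors of $pcp$ are exactly the cyclic conjugates of the length-$(m+1)$ word $p\,c$: for each $j \in \{1,\dots,m+1\}$ the factor starting at position $j$ is $p(j) \cdots p(m)\, c\, p(1) \cdots p(j-1)$. Denoting by $S_c$ the set of distinct such cyclic conjugates of $p\,c$, the union bound gives
\[ \comp(\length{\block{k}}+1) \;\le\; \Bigl|\bigcup_{c \in \alphab_{k+1}} S_c\Bigr| \;\le\; \sum_{c \in \alphab_{k+1}} |S_c|. \]

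The trivial bound $|S_c| \le m+1$ holds for every $c$, but a much better bound is available when $c = a_k$. The recursion $\block{k} = \block{k-1}\, a_k\, \block{k-1}\, a_k \cdots a_k\, \block{k-1}$ (with $\block{-1}$ interpreted as the empty word in the base case $k=0$) means that appending one further $a_k$ on the right produces the periodic factorisation
\[ \block{k} \cdot a_k \;=\; (\block{k-1}\, a_k)^{n_k}, \]
so that $p\,a_k$, viewed as a cyclic word, has period $\length{\block{k-1}}+1$ and hence at most $\length{\block{k-1}}+1$ distinct cyclic conjugates. Consequently $|S_{a_k}| \le \length{\block{k-1}}+1$.

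To conclude, I would split into two cases according to whether $a_k \in \alphab_{k+1}$. If $a_k \notin \alphab_{k+1}$, then every summand in $\sum_{c \in \alphab_{k+1}} |S_c|$ is bounded by $m+1$, giving at most $\card{\alphab_{k+1}}(m+1) = (\card{\alphab_k}-1)(m+1)$; the indicator $\mathbbm{1}_{\alphab_{k+1}}(a_k)$ vanishes, so this matches the claim. If $a_k \in \alphab_{k+1}$, I would split off the term $c = a_k$ and use the sharper bound $\length{\block{k-1}}+1$ there, while bounding each of the remaining $\card{\alphab_{k+1}} - 1 = \card{\alphab_k} - 1$ summands by $m+1$; the two contributions rearrange into the claimed right-hand side. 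The main obstacle is spotting the periodic factorisation $\block{k}\, a_k = (\block{k-1}\, a_k)^{n_k}$, which is exactly what creates the correction term $\mathbbm{1}_{\alphab_{k+1}}(a_k) \cdot (\length{\block{k-1}}+1)$ and explains why this correction vanishes precisely when $a_k \notin \alphab_{k+1}$.
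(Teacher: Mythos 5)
Your proposal is correct and follows essentially the same route as the paper: reduce to factors of $\block{k}\,c\,\block{k}$ via Proposition~\ref{prop:enthalten}, bound the count by $\length{\block{k}}+1$ for $c \neq a_{k}$, and exploit the period $\length{\block{k-1}}+1$ of $\block{k}\,a_{k}\,\block{k}$ (your factorisation $\block{k}a_{k} = (\block{k-1}a_{k})^{n_{k}}$ is exactly the paper's observation that the factors ``repeat already when reaching the second $\block{k-1}$''). The cyclic-conjugate packaging and the uniform treatment of $k=0$ via the empty word $\block{-1}$ are only cosmetic differences.
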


\begin{proof}
First consider the case $ k \geq 1 $. By Proposition~\ref{prop:enthalten} it is sufficient to study subwords of length $ \length{\block{k}}+1 $ of $ \block{k} \, a \, \block{k} $ for $ a \in \alphab_{k+1} $. For a letter $ a \neq a_{k} $ there are at most $ \length{\block{k}} + 1 $ many subwords. If $ a = a_{k} $ is possible at all (that is, if $a_{k} \in \alphab_{k+1} $ holds), then this case will result in at most $ \length{\block{k-1}} + 1 $ subwords. The reason is that the subwords in
\[ \block{k} a_{k} \block{k} = \Big[ \block{k-1} \; a_{k} \; \block{k-1} \; \hdots \; \block{k-1} \Big] a_{k} \Big[ \block{k-1} \; a_{k} \; \block{k-1} \; \hdots \; \block{k-1} \Big] \]
repeat already when reaching the second $\block{k-1}$. This yields
\[ \comp( \length{\block{k}}+1 ) \leq \card{ ( \alphab_{k+1} \setminus \{ a_{k} \} ) } \cdot ( \length{\block{k}}+1) + \mathbbm{1}_{ \alphab_{k+1}}(a_{k}) \cdot (\length{\block{k-1}} + 1) \]
for $k \geq 1$. The case $ k = 0 $ is similar: For every $ a \in \alphab_{1} \setminus \{ a_{0} \} $, there are at most $ \length{\block{0}} + 1 $ subwords in $ \block{0} a \block{0} $ and because of $\block{0} = a_{0}^{n_{0}-1}$, there is only one factor of length $ \length{\block{0}} + 1 $ in $ \block{0} a_{0} \block{0} $, namely $a_{0}^{n_{0}} $. Our definition of $\length{\block{-1}} = 0 $ allows us to write
\[ \comp( \length{\block{0}}+1 ) \leq \card{ ( \alphab_{1} \setminus \{ a_{0} \} ) } \cdot (\length{\block{0}} + 1) + \mathbbm{1}_{\alphab_{1}}(a_{0}) \cdot (\length{\block{-1}} + 1) \, . \]
Finally, we note that $ \card{ ( \alphab_{k+1} \setminus \{ a_{k} \} ) } = \card{ \alphab_{k} } - 1 $ holds for all $ k \geq 0 $: If $ a_{k} \in \alphab_{k+1} $ holds, then this implies $ \alphab_{k+1} = \alphab_{k} $ and $ \card{ ( \alphab_{k+1} \setminus \{ a_{k} \} ) } = \card{ \alphab_{k+1} } - 1 =  \card{ \alphab_{k} } - 1$. If, on the other hand, $ a_{k} \notin \alphab_{k+1} $ holds, then this implies $ \card{ \alphab_{k+1} } = \card{ \alphab_{k} } - 1 $ and $ \card{ ( \alphab_{k+1} \setminus \{ a_{k} \} ) } = \card{ \alphab_{k+1} } =  \card{ \alphab_{k} } - 1$.
\end{proof}

\begin{rem}
In Proposition~\ref{prop:untereSchrComp} we will show that the converse inequality holds as well, thus proving equality of the terms.
\end{rem}

\begin{prop}
\label{prop:geq2}
For all $k \geq 0$ and $ 0 \leq L \leq \length{\block{k}} - \length{\block{k-1}} - 1$ the inequality $ \growth(L) := \comp( L+1 ) - \comp(L) \geq \card{ \alphab_{k} } - 1$ holds.
\end{prop}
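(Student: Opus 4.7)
The idea is to produce one single word $w$ of length $L$ with $r(w) \geq \card{\alphab_{k}}$, where $r(u) := \card{\{ a \in \alphab : ua \in \langu{\subshift}\}}$ counts the right extensions of $u \in \langu{\subshift}$. Indeed, since every word of length $L+1$ in $\langu{\subshift}$ arises by extending a unique word of length $L$, one has
\[
\growth(L) \; = \; \comp(L+1) - \comp(L) \; = \; \sum_{\substack{u \in \langu{\subshift} \\ \length{u} = L}} \bigl( r(u) - 1 \bigr) \; \geq \; r(w) - 1 \, ,
\]
so it is enough to find one such $w$.

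My candidate is $w := $ the suffix of $\block{k}$ of length $L$, which is well defined since $L \leq \length{\block{k}}$. I would show $r(w) \geq \card{\alphab_{k}}$ by exhibiting two essentially different kinds of occurrences of $w$ in an arbitrary $\varrho \in \subshift$.

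The first family comes from the decomposition $\varrho = \hdots \block{k} \star \block{k} \star \hdots$ with $\star \in \alphab_{k+1}$: here $w$ is always the block of $L$ letters immediately preceding a $\star$. The sub-sequence of $\varrho$ formed by these $\star$-positions is itself the simple Toeplitz word generated by the shifted codings $(a_{j})_{j \geq k+1}$ and $(n_{j})_{j \geq k+1}$, because each further level $j \geq k+1$ of the construction places $a_{j}$ at some of the remaining $\star$-positions. Consequently every letter $a_{j} \in \alphab_{k+1}$ occurs as some $\star$, which gives $\alphab_{k+1} \subseteq r(w)$.

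The second family of occurrences lives inside $\block{k}$. Set $m := \length{\block{k-1}} + 1$, so that $\length{\block{k}} = n_{k} m - 1$. The recursion $\block{k} = \block{k-1} \, a_{k} \, \block{k-1} \, a_{k} \, \hdots \, a_{k} \, \block{k-1}$ immediately yields $\block{k}(p) = \block{k}(p - m)$ for every $m + 1 \leq p \leq n_{k} m - 1$, since both sides refer either to the same letter of $\block{k-1}$ in the same relative slot or to an internal $a_{k}$. Applying this shift to the $L$ positions of the suffix $w$, namely $\{n_{k} m - L, \hdots, n_{k} m - 1\}$, is legitimate under the condition $n_{k} m - L \geq m + 1$; this is exactly $L \leq (n_{k} - 1) m - 1 = \length{\block{k}} - \length{\block{k-1}} - 1$, our hypothesis. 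Hence $w$ also appears inside $\block{k}$ at the positions $\{(n_{k} - 1) m - L, \hdots, (n_{k} - 1) m - 1\}$, followed by the internal letter $a_{k}$ at position $(n_{k} - 1) m$, so that $a_{k} \in r(w)$. Combining both families gives $\{a_{k}\} \cup \alphab_{k+1} \subseteq r(w)$, a union that equals $\alphab_{k}$ regardless of whether $a_{k} \in \alphab_{k+1}$, finishing the proof. The main technical delicacy is the periodicity identity for $\block{k}$ together with the observation that the assumed upper bound on $L$ is precisely the range where the shift by $m$ stays inside $\block{k}$; the degenerate case $k = 0$ (with $m = 1$ and $\block{0} = a_{0}^{n_{0} - 1}$) follows from the same argument using the trivial period~$1$ of $\block{0}$.
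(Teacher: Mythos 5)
Your proposal is correct and follows essentially the same route as the paper: both arguments take the length-$L$ suffix of $\block{k}$ as the right special word, obtain all extensions by letters of $\alphab_{k+1}$ from the decomposition $\hdots \block{k} \star \block{k} \star \hdots$, and obtain the extension by $a_{k}$ from an occurrence of that suffix one period $\length{\block{k-1}}+1$ earlier inside $\block{k}$, which is exactly where the hypothesis $L \leq \length{\block{k}} - \length{\block{k-1}} - 1$ is used. Your explicit counting identity $\growth(L) = \sum_{\length{u}=L}(r(u)-1)$ and the uniform treatment of $k=0$ are minor presentational differences, not a different method.
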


\begin{proof}
Consider the suffix $v_{1}$ of $\block{k}$ that consists of the last $L$ letters. We will show that $v_{1}$ is a right special word, that is, a word with more than one extension to the right. More precisely, we will show that there are $\card{\alphab_{k} }$ different extensions, which implies that the complexity increases by at least $\card{ \alphab_{k} } - 1$ when we increase the word length by one.

First note that $v_{1}$ can be extended by all letters in $\alphab_{k+1}$. If $a_{k} \in \alphab_{k+1} $ holds, then the equality $ \card{ \alphab_{k+1} } = \card{ \alphab_{k} } $ follows and we are done. For $a_{k} \notin \alphab_{k+1} $, we will show that $v_{1}$ can nevertheless be extended to the right by $a_{k}$. Thus there are $ \card{ \alphab_{k+1} } + 1 = \card{ \alphab_{k} } $ different extensions, which proves the statement.

For $k \geq 1$ it follows from the decomposition $\block{k} = \block{k-1} \hdots \block{k-1} \; a_{k} \; \block{k-1} $ and $ L \leq \length{\block{k}} - ( \length{\block{k-1}} + 1) $ that $ v_{1} $ is a suffix of the first $ (n_{k} - 1) \cdot \length{\block{k-1}}+ n_{k} - 2 $ letters in $\block{k}$ and therefore can be followed by $ a_{k} $. For $ k = 0 $ and $n_{0} > 2$, the suffix is given by $v_{1} = a_{0}^{L}$ with $L \leq n_{0} - 2 $ and this can be extended to the right by $a_{0}$, as can be seen in $\block{0} = a_{0}^{n_{0}-1}$. For $ k = 0 $ and $ n_{0} = 2 $, the proposition follows immediately from $ \comp(0) = 1 $ and $ \comp(1) = \card{ \alphab } = \card{ \alphab_{0} } $.
\end{proof}

\begin{prop}
\label{prop:geq3}
Let $ k \geq 1 $ and $ \length{\block{k-1}} + 1 \leq L \leq 2 \length{\block{k-1}} - \length{\block{k-2}} $. If $a_{k-1} \in \alphab_{k} $ holds, then $\growth(L)$ is at least by one greater than stated in Proposition~\ref{prop:geq2}.
\end{prop}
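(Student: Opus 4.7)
The plan is to improve the counting from Proposition~\ref{prop:geq2} by identifying a second right-special word $u$ of length $L$, distinct from the suffix $v_1$ of $\block{k}$ used there. Since $\growth(L) = \sum_{v}(r(v)-1)$ summed over all length-$L$ words $v$ in $\langu{\subshift}$ (with $r(v)$ the number of right-extensions of $v$), the combined contributions of $v_1$ and $u$ will give $\growth(L) \geq (\card{\alphab_k}-1) + 1 = \card{\alphab_k}$, one more than the bound of Proposition~\ref{prop:geq2}.

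The key observation is that the hypothesis $a_{k-1} \in \alphab_{k+1}$ lets $a_{k-1}$ serve as a level-$k$ separator, so $\block{k}\, a_{k-1}\, \block{k}$ is a subword of $\omega$. Unfolding $\block{k-1} = [\block{k-2}\, a_{k-1}]^{n_{k-1}-1}\block{k-2}$ yields the periodic factorisation
\[
\block{k-1}\, a_{k-1}\, \block{k-1} = (\block{k-2}\, a_{k-1})^{2n_{k-1}-1}\, \block{k-2}
\]
with period $\length{\block{k-2}}+1$. This central segment is flanked by two $a_k$'s in $\omega$ (the last intra-$\block{k}$ separator of the left $\block{k}$ and the first of the right $\block{k}$), so I would work inside the stretch $S := a_k\, \block{k-1}\, a_{k-1}\, \block{k-1}\, a_k$ of length $2\length{\block{k-1}}+3$.

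I then define $u$ as the length-$L$ subword of $S$ starting at position $p := 2\length{\block{k-1}} - \length{\block{k-2}} - L + 2$. The upper bound $L \leq 2\length{\block{k-1}} - \length{\block{k-2}}$ gives $p \geq 2$, so both $u$ and its shifted copy starting at position $p + \length{\block{k-2}} + 1$ lie entirely inside the periodic interior of $S$ and coincide there by periodicity. The right-extension of the first occurrence is the letter of $S$ at position $p + L = 2\length{\block{k-1}} - \length{\block{k-2}} + 2$, which equals $a_{k-1}$ since it is the last letter of the $(2n_{k-1}-1)$-th copy of $\block{k-2}\, a_{k-1}$. The right-extension of the shifted occurrence is the letter at position $p + \length{\block{k-2}} + 1 + L = 2\length{\block{k-1}}+3$, which is the right boundary $a_k$ of $S$. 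Because $a_{k-1} \neq a_k$ by our standing assumption on the coding, $u$ is right-special with $r(u) \geq 2$.

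Finally I verify $u \neq v_1$ so that the contribution is genuinely additional. Using $L \geq \length{\block{k-1}} + 1$, the position $L - \length{\block{k-1}}$ is meaningful in both words. For $v_1$ this position holds the central $a_k$, while for $u$ it corresponds to position $\length{\block{k-1}} - \length{\block{k-2}} + 1$ in $S$, which equals $a_{k-1}$ since the identity $(n_{k-1}-1)(\length{\block{k-2}}+1) = \length{\block{k-1}} - \length{\block{k-2}}$ places it at the end of a $\block{k-2}\, a_{k-1}$ factor inside the periodic interior. Thus $u \neq v_1$. The main obstacle is the careful position bookkeeping: the extremal choice of $p$ must simultaneously ensure that the shifted copy of $u$ fits inside the periodic window, that one right-extension lands in the interior while the other lands on the boundary $a_k$, and that the distinguishing position $L - \length{\block{k-1}}$ falls on an $a_{k-1}$ letter of $u$ rather than on $a_k$.
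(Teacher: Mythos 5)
Your proposal is correct and is essentially the paper's own argument: your word $u$ is exactly the paper's second right-special word $v_{2}$ (the length-$L$ suffix of $\block{k-1}\,a_{k-1}\,\block{k-1}$), you exhibit the same two extensions $a_{k-1}$ and $a_{k}$, and you distinguish it from $v_{1}$ by the same position $L-\length{\block{k-1}}$. The only differences are presentational: you justify the two extensions via explicit periodicity of $(\block{k-2}\,a_{k-1})^{2n_{k-1}-1}\block{k-2}$ instead of reading them off the block decomposition, and your bookkeeping handles $k=1$ and $k\geq 2$ uniformly where the paper treats $k=1$ separately.
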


\begin{proof}
We will show that there is a word $v_{2}$ of length $L$ that is right special and different from the suffix $v_{1}$ of $\block{k}$ that was considered in Proposition~\ref{prop:geq2}. First assume $k \geq 2$ and note that the word $ \block{k-1} \, a_{k-1} \, \block{k-1} $ occurs in the subshift, since $ a_{k-1} \in \alphab_{k}$ holds by assumption. From the decomposition into $\block{k-1}$-blocks and single letters it is clear that $\block{k-1} \, a_{k-1} \, \block{k-1} $ has to be followed by $a_{k}$. Since $ k \geq 2 $, we can decompose $\block{k-1}$ in $\block{k-2}$-blocks and single letters $ a_{k-1} $, see Figure~\ref{fig:2RightSpecial}. Let $v_{2}$ be the suffix of length $L$ of $\block{k-1} \, a_{k-1} \, \block{k-1}$. The decomposition shows that $v_{2}$ can be extended to the right with both, $a_{k}$ and $a_{k-1}$. Moreover, it follows from $ L \geq \length{\block{k-1}} + 1 $ that the word $v_{2}$ ends with $ a_{k-1} \, \block{k-1} $. It is therefore different from the suffix $v_{1}$ of $ \block{k} = \block{k-1} \, a_{k} \hdots a_{k} \, \block{k-1}$ of length $L$, which ends with $ a_{k} \, \block{k-1} $. 

\begin{figure}
\centering
\footnotesize
\begin{tikzpicture}
[every node/.style ={rectangle, inner sep=0pt, minimum height=0.6cm, draw},
pn/.style={minimum width=7.22cm},
pn-1/.style={minimum width=2.07cm},
pn-2/.style={minimum width=0.35cm},
buchst/.style={minimum width=0.28cm}]
%
\node [right] at (0,2) [pn-1] (A) {$\block{k-1}$};

\node [buchst] (A) [right=0.1cm of A] {$c$};
\node[pn-2] (A) [right=0.1cm of A] {$$};
\node [buchst] (A) [right=0.1cm of A] {$b$};
\node[pn-2] (A) [right=0.1cm of A] {$$};
\node [buchst] (A) [right=0.1cm of A] {$b$};
\node[pn-2] (A) [right=0.1cm of A] {$$};
\node [buchst] (A) [right=0.1cm of A] {$b$};
\node [pn-1] (A) [right=0.1cm of A] {$\block{k-1}$};
\node [buchst] (Fix1) [right=0.1cm of A] {$c$};
\node [pn-1] (A) [right=0.1cm of Fix1] {$\block{k-1}$};

\node [right] at (0,1) [pn-1] (A) {$\block{k-1}$};
\node [buchst] (A) [right=0.1cm of A] {$c$};
\node [pn-1] (A) [right=0.1cm of A] {$\block{k-1}$};

\node [buchst] (A) [right=0.1cm of A] {$b$};
\node [pn-1] (A) [right=0.1cm of A] {$\block{k-1}$};
\node [buchst] (A) [right=0.1cm of A] {$c$};
\node [pn-1] (A) [right=0.1cm of A] {$\block{k-1}$};

\node [right] at (0,0) [pn-1] (A) {$\block{k-1}$};
\node [buchst] (A) [right=0.1cm of A] {$c$};
\node[pn-2] (A) [right=0.1cm of A] {$$};
\node [buchst] (A) [right=0.1cm of A] {$b$};
\node[pn-2] (A) [right=0.1cm of A] {$$};
\node [buchst] (A) [right=0.1cm of A] {$b$};
\node[pn-2] (A) [right=0.1cm of A] {$$};
\node [buchst] (A) [right=0.1cm of A] {$b$};
\node[pn-2] (A) [right=0.1cm of A] {$$};
\node [buchst] (A) [right=0.1cm of A] {$b$};
\node[pn-2] (A) [right=0.1cm of A] {$$};
\node [buchst] (A) [right=0.1cm of A] {$b$};
\node[pn-2] (A) [right=0.1cm of A] {$$};

\node [buchst] (A) [right=0.1cm of A] {$c$};
\node [pn-1] (A) [right=0.1cm of A] {$\block{k-1}$};
\node [right] at (0,-1) [pn-1] (A) {$\block{k-1}$};
\node [buchst] (A) [right=0.1cm of A] {$c$};
\node[pn-2] (A) [right=0.1cm of A] {$$};
\node [buchst] (A) [right=0.1cm of A] {$b$};
\node[pn-2] (A) [right=0.1cm of A] {$$};
\node [buchst] (A) [right=0.1cm of A] {$b$};
\node[pn-1] (A)  [right=0.1cm of A] {$\block{k-1}$};
\node [buchst] (Fix2) [right=0.1cm of A] {$b$};
\node[pn-2] (A) [right=0.1cm of Fix2] {$$};

\node [buchst] (A) [right=0.1cm of A] {$c$};
\node [pn-1] (A) [right=0.1cm of A] {$\block{k-1}$};
\node [buchst] (A) [above=0.3cm of Fix1] {$c$};
\node [left=0.1cm of A, minimum width=3.9cm] {$v_{2}$};
\node [buchst] (A) [below=0.3cm of Fix2] {$b$};
\node [left=0.1cm of A, minimum width=3.9cm] {$v_{2}$};
\end{tikzpicture}
\normalsize
\caption{Two different extensions of $v_{2}$ in the decomposition of $\block{k-1} b \block{k-1}$. For the sake of readability we use the abbreviations $b := a_{k-1}$ and $ c := a_{k} $.}
\label{fig:2RightSpecial}
\end{figure}

For $k=1$ we define $v_{2} = a_{0}^{L}$. Because of $a_{0} \in \alphab_{1}$, this word occurs in $ \block{0} a_{0} \block{0} = a_{0}^{n_{0}-1}  a_{0} a_{0}^{n_{0}-1} $. From the decomposition into $\block{0}$-blocks and single letters it is clear that $\block{0} \, a_{0} \, \block{0} $ has to be followed by $a_{1}$. For every length $ n_{0} \leq L \leq 2 \cdot (n_{0} - 1) $, $v_{2}$ can therefore be extended to the right by both, $a_{0}$ and $a_{1}$. Moreover it ends with $ a_{0}^{n_{0}} $, while the suffix $v_{1}$ of length $L$ of $\block{1}$ ends with $ a_{1} a_{0}^{n_{0}-1} $.
\end{proof}

\begin{rem}
\label{rem:wN-1An+1An}
Note that $ a_{k-1} \in \alphab_{k+1} $ holds if and only if $ a_{k-1} \in \alphab_{k} $ holds: Clearly $ a_{k-1} \in \alphab_{k+1} $ implies $ a_{k-1} \in \alphab_{k} $, as $ \alphab_{k+1} \subseteq \alphab_{k} $ holds for all $k$. Conversely,  $ a_{k-1} \in \alphab_{k} =  \{ a_{j} : j \geq k \} $ implies $ a_{k-1} \in \alphab_{k+1} = \{ a_{j} : j \geq k+1 \}  $ since we assumed $ a_{k-1} \neq a_{k} $.
\end{rem}

\begin{prop}
\label{prop:untereSchrComp}
For all $k \geq 0 $ the following inequality holds:
\[ \comp( \length{\block{k}} +1 ) \geq ( \card{ \alphab_{k} } - 1 ) \cdot (\length{\block{k}}+1) + \mathbbm{1}_{\alphab_{k+1}} (a_{k}) \cdot ( \length{\block{k-1}}+1 ) \]
\end{prop}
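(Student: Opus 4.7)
The plan is to write $\comp(\length{\block{k}}+1) = 1 + \sum_{L=0}^{\length{\block{k}}} \growth(L)$ (using $\comp(0)=1$) and to lower-bound each summand via Propositions~\ref{prop:geq2} and~\ref{prop:geq3}.

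For each $L \in \{0, 1, \ldots, \length{\block{k}}\}$, I let $j^*(L)$ be the smallest index $j \in \{0, \ldots, k+1\}$ for which Proposition~\ref{prop:geq2} applies at level $j$, i.e., $L \leq \length{\block{j}} - \length{\block{j-1}} - 1$; since $n_{k+1} \geq 2$, this level is always well-defined and at most $k+1$. Proposition~\ref{prop:geq2} then gives the pointwise bound $\growth(L) \geq \card{\alphab_{j^*(L)}} - 1$, and the level sets $P_j := \{L : j^*(L) = j\}$ partition $[0, \length{\block{k}}]$ into consecutive intervals whose sizes admit a closed form in terms of $\length{\block{j}}$, $\length{\block{j-1}}$ and $\length{\block{j-2}}$.

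On top of this, for every $l \in \{1, \ldots, k\}$ satisfying $a_{l-1} \in \alphab_{l+1}$ (equivalently $a_{l-1} \in \alphab_l$, by Remark~\ref{rem:wN-1An+1An}), Proposition~\ref{prop:geq3} adds an extra $+1$ to the bound on the interval $[\length{\block{l-1}}+1, 2\length{\block{l-1}} - \length{\block{l-2}}]$, which has size $\length{\block{l-1}} - \length{\block{l-2}}$. These intervals are pairwise disjoint across $l$. A brief check, made nontrivial when $n_l = 2$ forces the Proposition~\ref{prop:geq3} range to reach into positions $L$ with $j^*(L) > l$, confirms via $\card{\alphab_l} \geq \card{\alphab_{l+1}}$ that Proposition~\ref{prop:geq3} genuinely beats the Proposition~\ref{prop:geq2} bound by at least one at every $L$ in these intervals.

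Summing all contributions and applying partial summation, together with the telescoping identity $\sum_{j=0}^{k-1}(\length{\block{j}} - \length{\block{j-1}}) = \length{\block{k-1}}$ and the elementary relation $\card{\alphab_j} - \card{\alphab_{j+1}} = 1 - \mathbbm{1}_{\alphab_{j+1}}(a_j)$, the whole algebraic expression collapses to exactly $(\card{\alphab_k}-1)(\length{\block{k}}+1) + \mathbbm{1}_{\alphab_{k+1}}(a_k)(\length{\block{k-1}}+1)$. The hard part of the proof is therefore this last bookkeeping step: one needs to verify that the $+1$ bonuses granted by Proposition~\ref{prop:geq3} align exactly with the deficit left by Proposition~\ref{prop:geq2}'s partition bound, so that no additional slack remains and the inequality is sharp (matching the upper bound of Proposition~\ref{prop:obereSchrComp}).
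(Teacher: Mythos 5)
Your proposal is correct and rests on the same essential ingredients as the paper's proof: the telescoping identity $\comp(\length{\block{k}}+1)=\comp(0)+\sum_{L=0}^{\length{\block{k}}}\growth(L)$ combined with the lower bounds of Propositions~\ref{prop:geq2} and~\ref{prop:geq3}, including the correct identification of the one delicate point, namely that for $n_l=2$ the Proposition~\ref{prop:geq3} window lies in the region where only the level-$(l+1)$ bound of Proposition~\ref{prop:geq2} applies, so one must check the two right special words contribute additively there. The only difference is organizational: the paper runs the same telescoping as an induction on $k$, summing only over $\length{\block{k-1}}+1\leq L\leq\length{\block{k}}$ at each step, which localises the bookkeeping that you defer to a single global partial summation at the end (that summation does collapse to the stated right-hand side, using your level-set sizes together with $\mathbbm{1}_{\alphab_{j+1}}(a_j)=1-\card{\alphab_j}+\card{\alphab_{j+1}}$).
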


\begin{proof}
We proceed by induction and express the complexity as a telescoping sum of the growth of the complexity. For $k = 0$, the growth is bound from below by Proposition~\ref{prop:geq2} and we obtain
\begin{align*}
\comp( \length{\block{0}}+1 ) &= \growth( \length{\block{0}} ) + \sum_{L=0}^{\length{\block{0}}-1} \growth(L) + \comp(0) \\
& \geq \card{ \alphab_{1} } - 1 + ( \card{ \alphab_{0} } - 1 ) \cdot \length{\block{0}} + 1 \\
&= (\card{ \alphab_{0} } - 1) \cdot ( \length{\block{0}} + 1 ) + \mathbbm{1}_{\alphab_{1}}(a_{0} ) \cdot( \length{\block{-1}} + 1 ) \, ,
\end{align*}
where we used $ 1 - \card{ \alphab_{0} } + \card{ \alphab_{1} } = \mathbbm{1}_{\alphab_{1}}(a_{0} ) $ and $ \length{\block{-1}} + 1 = 1 $ in the last line. Now assume that the claim is true for $k-1$. For $k \geq 1$, we employ the bounds from the Propositions~\ref{prop:geq2} and \ref{prop:geq3}. This yields
\begin{align*}
& \comp( \length{\block{k}}+1 ) \\
& = \sum_{ L = \length{\block{k-1}}+1 }^{ \length{\block{k}} - \length{\block{k-1}} -1 }{ \growth(L) } + \sum_{ L = \length{\block{k}} - \length{\block{k-1}} }^{ \length{\block{k}} }{ \growth(L) } + \comp(\length{\block{k-1}}+1) \\
& \geq (\card{ \alphab_{k} } - 1) \cdot ( \length{\block{k}} - 2 \length{\block{k-1}} -1 ) + (\card{ \alphab_{k+1} } - 1) \cdot ( \length{\block{k-1}} + 1 ) \\
& \qquad + \mathbbm{1}_{\alphab_{k+1}}(a_{k-1}) \cdot (\length{\block{k-1}} - \length{\block{k-2}}) + \comp(\length{\block{k-1}}+1) \\
& = (\card{ \alphab_{k} } - 1) \cdot ( \length{\block{k}} +1 - 2 \length{\block{k-1}} -2 ) + (\card{ \alphab_{k} } - 1 + \mathbbm{1}_{\alphab_{k+1}}(a_{k}) - 1) \cdot \\
& \qquad ( \length{\block{k-1}} + 1 ) + \mathbbm{1}_{\alphab_{k+1}}(a_{k-1}) \cdot (\length{\block{k-1}} - \length{\block{k-2}}) + \comp(\length{\block{k-1}}+1) \\
& = (\card{ \alphab_{k} } - 1) \cdot ( \length{\block{k}} + 1) +\mathbbm{1}_{\alphab_{k+1}}(a_{k}) \cdot ( \length{\block{k-1}} + 1 ) \\
& \qquad - ( \card{ \alphab_{k} } - \mathbbm{1}_{\alphab_{k}}(a_{k-1}) ) \cdot( \length{\block{k-1}} +1 ) - \mathbbm{1}_{\alphab_{k}}(a_{k-1}) \cdot (\length{\block{k-2}} + 1) + \comp(\length{\block{k-1}}+1) \\
& = (\card{ \alphab_{k} } - 1) \cdot ( \length{\block{k}} +1) +\mathbbm{1}_{\alphab_{k+1}}(a_{k}) \cdot ( \length{\block{k-1}} + 1 ) \\
& \qquad - \Big[ ( \card{ \alphab_{k-1} } - 1 ) \cdot( \length{\block{k-1}} +1 ) + \mathbbm{1}_{\alphab_{k}}(a_{k-1}) \cdot (\length{\block{k-2}} + 1) \Big] + \comp(\length{\block{k-1}}+1) \\
& \geq (\card{ \alphab_{k} } - 1) \cdot ( \length{\block{k}} +1) +\mathbbm{1}_{\alphab_{k+1}}(a_{k}) \cdot ( \length{\block{k-1}} + 1 ) \, ,
\end{align*}
where we used $ \card{ \alphab_{k} } + 1 - \mathbbm{1}_{\alphab_{k}}(a_{k-1}) =  \card{ \alphab_{k-1} } $ in the next to last line and the induction hypothesis in the last line.
\end{proof}

The Propositions~\ref{prop:obereSchrComp} and \ref{prop:untereSchrComp} yield the exact value of the complexity function at all points of the form $\length{\block{k}} + 1$. In particular, the subwords of the words $ p(k) \, a \, p(k) $ with $a \in \alphab_{k+1} $, that were counted in the proof of Proposition~\ref{prop:obereSchrComp}, are pairwise different. Moreover, the complexity grows exactly by the amount given as lower bounds in Proposition~\ref{prop:geq2} and Proposition~\ref{prop:geq3}. If the growth was faster, then the value at $\length{\block{k}}+1$ could not be obtained.

\begin{coro}
\label{coro:CPnAndGrowth}
For all $ k \geq 0 $, the complexity of $\length{\block{k}} + 1$ is given by
\[ \comp( \length{\block{k}} +1 ) = ( \card{ \alphab_{k} } - 1 ) \cdot (\length{\block{k}}+1) + \mathbbm{1}_{ \alphab_{k+1} } (a_{k}) \cdot ( \length{\block{k-1}}+1 ) \, .\]
The growth rate of the complexity function is given by
\begin{align*}
\growth(L) = \; & \card{ \alphab_{0} } - 1 \qquad \text{for } 0 \leq L \leq \length{\block{0}} - 1 \, , \\
\text{by} \qquad \growth(L) = \; & \card{ \alphab_{1} } - 1 \qquad \text{for } L = \length{\block{0}} \\
\text{and by} \qquad \growth(L) = \; & \card{ \alphab_{k} } - 1\\
& - \begin{cases}
0 &  \text{if } \length{\block{k-1}}+1 \leq L \leq \length{\block{k}} - \length{\block{k-1}}-1 \\
\card{ \alphab_{k} } - \card{ \alphab_{k+1} } & \text{if } \length{\block{k}} - \length{\block{k-1}} \leq L \leq \length{\block{k}} 
\end{cases} \\ & + \begin{cases}
\mathbbm{1}_{\alphab_{k}}(a_{k-1}) &  \text{if } \length{\block{k-1}} + 1 \leq L \leq 2 \length{\block{k-1}} - \length{\block{k-2}} \\
0 &  \text{if } 2 \length{\block{k-1}} - \length{\block{k-2}} + 1 \leq L \leq \length{\block{k}}
\end{cases}
\end{align*}
for $k \geq 1$ and $ \length{\block{k-1}} + 1 \leq L \leq \length{\block{k}} $. Note that $ \card{ \alphab_{k} } - \card{ \alphab_{k+1} } = \mathbbm{1}_{\cmplmt{\alphab_{k+1}}}(a_{k}) $ holds for all $k$.
\end{coro}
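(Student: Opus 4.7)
The first assertion of the corollary, the closed form for $\comp(\length{\block{k}}+1)$, is immediate from Propositions~\ref{prop:obereSchrComp} and \ref{prop:untereSchrComp}, which together pinch the complexity between matching upper and lower bounds. The auxiliary identity $\card{\alphab_k}-\card{\alphab_{k+1}} = \mathbbm{1}_{\cmplmt{\alphab_{k+1}}}(a_k)$ then follows by a two-case analysis: if $a_k \in \alphab_{k+1}$ then $\alphab_{k+1} = \alphab_k$, whereas otherwise $\alphab_{k+1} = \alphab_k \setminus \{a_k\}$; in both cases the asserted identity is transparent.

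For the growth-rate formula, my plan is to combine the telescoping identity
\[ \comp(\length{\block{k}}+1) - 1 = \sum_{L=0}^{\length{\block{k}}} \growth(L) \]
with pointwise lower bounds on $\growth(L)$ read off from Propositions~\ref{prop:geq2} and~\ref{prop:geq3}. Concretely, for $L$ in the initial range $[\length{\block{k-1}}+1,\length{\block{k}}-\length{\block{k-1}}-1]$ the hypothesis of Proposition~\ref{prop:geq2} at index $k$ is met, giving $\growth(L) \geq \card{\alphab_k}-1$; for $L$ in the tail $[\length{\block{k}}-\length{\block{k-1}},\length{\block{k}}]$ index $k$ fails but index $k+1$ applies (since $n_{k+1}\geq 2$ forces $\length{\block{k}} \leq \length{\block{k+1}}-\length{\block{k}}-1$), yielding $\growth(L) \geq \card{\alphab_{k+1}}-1$, i.e.\ a decrease of $\mathbbm{1}_{\cmplmt{\alphab_{k+1}}}(a_k)$. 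Proposition~\ref{prop:geq3} at index $k$ supplies the extra $+1$ on the sub-range $[\length{\block{k-1}}+1,\,2\length{\block{k-1}}-\length{\block{k-2}}]$ exactly when $a_{k-1} \in \alphab_{k+1}$, which by Remark~\ref{rem:wN-1An+1An} is the same as $a_{k-1}\in\alphab_k$, producing the indicator $\mathbbm{1}_{\alphab_k}(a_{k-1})$ in the formula. The two low-index ranges $[0,\length{\block{0}}-1]$ and $\{\length{\block{0}}\}$ are handled analogously by applying Proposition~\ref{prop:geq2} at indices $0$ and $1$.

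To close the loop, I would verify by direct summation that these proposed pointwise values add up to exactly $\comp(\length{\block{k}}+1)-1$; this is essentially the same piece of arithmetic that drives the inductive step in the proof of Proposition~\ref{prop:untereSchrComp}. Since each $\growth(L)$ is bounded below by the proposed value and their sum is already equal to the correct total, every individual lower bound must be attained with equality, which is exactly the content of the growth-rate formula.

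The main subtlety (and, I expect, the only genuine obstacle) is bookkeeping: one must check that larger-index invocations of Propositions~\ref{prop:geq2} and~\ref{prop:geq3} cannot sharpen the bounds used above, since otherwise the pinching argument would overshoot. This is however automatic from the monotonicity $\card{\alphab_j} \leq \card{\alphab_{k+1}}$ for $j \geq k+1$, so invoking Proposition~\ref{prop:geq2} at any $j > k+1$ gives a weaker lower bound than the one already used, and Proposition~\ref{prop:geq3} at index $j \geq k+1$ acts on ranges disjoint from $[0,\length{\block{k}}]$. Once this monotonicity observation is in place, the remainder is a straightforward interval-by-interval verification.
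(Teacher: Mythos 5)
Your proposal is correct and takes essentially the same route as the paper: the exact value at $\length{\block{k}}+1$ comes from pinching Propositions~\ref{prop:obereSchrComp} and~\ref{prop:untereSchrComp} against each other, and since the pointwise lower bounds on $\growth$ from Propositions~\ref{prop:geq2} and~\ref{prop:geq3} already telescope to exactly this value, each must be attained with equality. Your closing concern about sharper bounds from larger-index invocations is logically superfluous (once the lower bounds sum to the true total, no valid lower bound can exceed the actual growth), but the monotonicity check you supply is correct and harmless.
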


\subsection{Computing the Complexity}

With the growth $ \growth(L) = \comp(L+1) - \comp(L) $ known for all $L \geq 0$, we can now prove an explicit formula for the complexity function. For the computation we have to distinguish three cases and we will split up the result into three statements accordingly.

Since the additional increase observed in Proposition~\ref{prop:geq3} occurs only for $ L \geq \length{\block{0}} + 1$, we tread the complexity in the case of $ L \leq \length{\block{0}} + 1 $ separately:
 
\begin{prop}[\textbf{Complexity function I}]
\label{prop:comp1}
The first values of the complexity function are
\begin{align*}
\comp( L ) &= ( \card{ \alphab_{0} } - 1 ) L + 1 & & \hspace{-4em} \text{for } 0 \leq L \leq \length{\block{0}} \\
\text {and} \quad \comp( L) &= ( \card{ \alphab_{0} } - 1 ) L + \mathbbm{1}_{\alphab_{1}}(a_{0}) & & \hspace{-4em} \text{for } L = \length{\block{0}} + 1 \, .
\end{align*}
\end{prop}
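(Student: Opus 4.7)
The plan is to derive both formulas directly from Corollary~\ref{coro:CPnAndGrowth} by telescoping the growth function $\growth$. Since $\comp(L) = \comp(0) + \sum_{j=0}^{L-1} \growth(j)$ and $\comp(0) = 1$ (the empty word is the only word of length zero), this reduces the statement to a finite sum of values of $\growth$ in the regime $L \leq \length{\block{0}} + 1$.

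For the first formula, I would use the fact that for $0 \leq j \leq \length{\block{0}} - 1$ the corollary gives $\growth(j) = \card{\alphab_{0}} - 1$, so that
\[ \comp(L) = 1 + \sum_{j = 0}^{L-1} (\card{\alphab_{0}} - 1) = (\card{\alphab_{0}} - 1) L + 1 \qquad \text{for } 0 \leq L \leq \length{\block{0}} \, . \]

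For the second formula (the single value $L = \length{\block{0}} + 1$), I would add the single extra growth step $\growth(\length{\block{0}}) = \card{\alphab_{1}} - 1$, obtaining
\[ \comp(\length{\block{0}} + 1) = (\card{\alphab_{0}} - 1) \length{\block{0}} + 1 + (\card{\alphab_{1}} - 1) = (\card{\alphab_{0}} - 1)(\length{\block{0}} + 1) + (\card{\alphab_{1}} - \card{\alphab_{0}} + 1) \, . \]
The only non-routine step is then to recognise $\card{\alphab_{1}} - \card{\alphab_{0}} + 1 = \mathbbm{1}_{\alphab_{1}}(a_{0})$. This identity follows from the remark at the end of Corollary~\ref{coro:CPnAndGrowth} applied with $k = 0$: $\card{\alphab_{0}} - \card{\alphab_{1}} = \mathbbm{1}_{\cmplmt{\alphab_{1}}}(a_{0}) = 1 - \mathbbm{1}_{\alphab_{1}}(a_{0})$, which rearranges to exactly what is needed.

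I do not anticipate a serious obstacle; the entire argument is a bookkeeping exercise given Corollary~\ref{coro:CPnAndGrowth}. The only thing to be careful about is to correctly invoke the piecewise definition of $\growth$ at the transition point $L = \length{\block{0}}$, where the value changes from $\card{\alphab_{0}} - 1$ to $\card{\alphab_{1}} - 1$, and to properly convert the arithmetic difference $\card{\alphab_{1}} - \card{\alphab_{0}}$ into the indicator $\mathbbm{1}_{\alphab_{1}}(a_{0})$ via the complement identity.
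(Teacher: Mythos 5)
Your proposal is correct and essentially matches the paper's proof: the paper likewise obtains the range $0 \leq L \leq \length{\block{0}}$ by telescoping $\growth(L) = \card{\alphab_{0}}-1$ from $\comp(0)=1$, and reads off the value at $L = \length{\block{0}}+1$ from Corollary~\ref{coro:CPnAndGrowth} (using $\length{\block{-1}}+1=1$) rather than adding the extra growth step $\growth(\length{\block{0}}) = \card{\alphab_{1}}-1$ and converting $\card{\alphab_{1}}-\card{\alphab_{0}}+1$ into $\mathbbm{1}_{\alphab_{1}}(a_{0})$ as you do. Both routes rest on the same corollary and your indicator identity is the one already noted there, so the difference is purely cosmetic.
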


\begin{proof}
 The complexity of the length $ L =\length{\block{0}}+1 $ is already known from Corollary~\ref{coro:CPnAndGrowth}. The formula for $ 0 \leq L \leq \length{\block{0}} $ follows from $ \comp(0) = 1 $ and $ \growth(L) = \alphab_{0} - 1 $ for $ 0 \leq L \leq \length{\block{0}} - 1 $.
\end{proof}

The following two theorems will deal with the case $ L \geq \length{\block{0}}+2 $. It turns out that it is important to distinguish between $ n_{k} = 2 $ and $ n_{k} > 2 $, since the order, in which the cases in the formula for $\growth(L)$ in Corollary~\ref{coro:CPnAndGrowth} change, is different for $ n_{k} = 2 $ and $ n_{k} > 2 $.

\begin{thm}[Complexity function II]
\label{thm:comp2}
For $k \geq 1$ with $ n_{k}  = 2 $, the complexity function in the range $ \length{\block{k-1}} + 2 \leq L \leq \length{\block{k}} + 1 $ is given by
\begin{align*}
\comp( L ) &= ( \card{ \alphab_{k+1} } - 1 ) L + ( \card{ \alphab_{k-1} } - \card{ \alphab_{k+1} } ) ( \length{\block{k-1}} + 1) \\
& \quad + \mathbbm{1}_{\alphab_{k}}(a_{k-1}) \cdot \begin{cases}
- \length{\block{k-1}} + \length{\block{k-2}} + L & \text{if } \length{\block{k-1}} + 2 \leq L \leq \length{\block{k}} - \length{\block{k-2}}  \\
\length{\block{k-1}} + 1 & \text{if } \length{\block{k}} - \length{\block{k-2}} + 1 \leq L \leq \length{\block{k}} + 1
\end{cases}
\end{align*}
\end{thm}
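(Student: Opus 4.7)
The plan is to integrate the growth rate from Corollary~\ref{coro:CPnAndGrowth} starting from the already known value $\comp(\length{\block{k-1}}+1)$ and telescoping forward. Precisely, for any $L$ in the stated range I would write
\[ \comp(L) \; = \; \comp(\length{\block{k-1}}+1) \; + \sum_{j = \length{\block{k-1}}+1}^{L-1} \growth(j) \, , \]
plug in $\comp(\length{\block{k-1}}+1) = (\card{\alphab_{k-1}}-1)(\length{\block{k-1}}+1) + \mathbbm{1}_{\alphab_{k}}(a_{k-1}) \cdot (\length{\block{k-2}}+1)$, and evaluate the sum using the piecewise description of $\growth$ from Corollary~\ref{coro:CPnAndGrowth}.

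The key simplification comes from the assumption $n_{k}=2$, which gives $\length{\block{k}}+1 = 2(\length{\block{k-1}}+1)$, hence $\length{\block{k}} - \length{\block{k-1}} = \length{\block{k-1}}+1$. This means that on the entire summation range $j \geq \length{\block{k-1}}+1$ we are already in the subcase "$\length{\block{k}} - \length{\block{k-1}} \leq L \leq \length{\block{k}}$" of the first piecewise term, so the $-(\card{\alphab_{k}} - \card{\alphab_{k+1}})$ correction is active for every summand. Thus each $\growth(j)$ equals $\card{\alphab_{k+1}} - 1$, plus an extra $\mathbbm{1}_{\alphab_{k}}(a_{k-1})$ exactly when $j \leq 2\length{\block{k-1}} - \length{\block{k-2}} = \length{\block{k}} - \length{\block{k-2}} - 1$.

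Counting summands is now straightforward. In Case 1, $\length{\block{k-1}}+2 \leq L \leq \length{\block{k}}-\length{\block{k-2}}$, every index $j \in [\length{\block{k-1}}+1,L-1]$ satisfies the extra-indicator condition, so the sum contributes $(L-1-\length{\block{k-1}})(\card{\alphab_{k+1}}-1 + \mathbbm{1}_{\alphab_{k}}(a_{k-1}))$. In Case 2, $L \geq \length{\block{k}}-\length{\block{k-2}}+1$, the indicator fires for exactly $\length{\block{k-1}}-\length{\block{k-2}}$ of the indices and fails for the rest, giving $(L-1-\length{\block{k-1}})(\card{\alphab_{k+1}}-1) + \mathbbm{1}_{\alphab_{k}}(a_{k-1})(\length{\block{k-1}}-\length{\block{k-2}})$. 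Adding the base value $\comp(\length{\block{k-1}}+1)$ and collecting coefficients of $L$ and of $\length{\block{k-1}}+1$ yields the two branches of the claimed formula, after using the identity $\card{\alphab_{k-1}} - 1 = (\card{\alphab_{k+1}}-1) + (\card{\alphab_{k-1}}-\card{\alphab_{k+1}})$ and the observation $\mathbbm{1}_{\alphab_{k}}(a_{k-1})(\length{\block{k-2}}+1) + \mathbbm{1}_{\alphab_{k}}(a_{k-1})(\length{\block{k-1}}-\length{\block{k-2}}) = \mathbbm{1}_{\alphab_{k}}(a_{k-1})(\length{\block{k-1}}+1)$ in Case 2.

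The main obstacle is purely bookkeeping: the formula for $\growth$ is piecewise in two independent conditions, so I need to check carefully that for $n_{k}=2$ the summation range really sits in a \emph{single} subcase of the first piecewise term while crossing the threshold of the second, and to correctly place the boundary value $L = \length{\block{k}}-\length{\block{k-2}}$ (which belongs to Case 1 in the statement, corresponding to the last $j = L-1$ still carrying the indicator). No new combinatorial input is needed beyond Corollary~\ref{coro:CPnAndGrowth}; the work is entirely in the telescoping and the algebraic simplification.
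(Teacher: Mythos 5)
Your proposal is correct and follows essentially the same route as the paper: the paper likewise observes that $n_{k}=2$ collapses the first piecewise term of $\growth$ in Corollary~\ref{coro:CPnAndGrowth} to the constant $\card{\alphab_{k+1}}-1$ on the whole range, and then telescopes from $\comp(\length{\block{k-1}}+1)$ in the same two cases. Your bookkeeping (the count of $\length{\block{k-1}}-\length{\block{k-2}}$ indicator-carrying indices in Case~2 and the placement of the boundary $L=\length{\block{k}}-\length{\block{k-2}}$ in Case~1) checks out.
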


\begin{proof}
Because of $n_{k} = 2 $ we have $ \length{\block{k}} = 2 \length{\block{k-1}} + 1 $ and the growth given in Corollary~\ref{coro:CPnAndGrowth} simplifies to
\[ \growth( L ) = \card{ \alphab_{k+1} } - 1 + \begin{cases}
\mathbbm{1}_{\alphab_{k}}(a_{k-1}) &  \text{if } \length{\block{k-1}} + 1 \leq L \leq \length{\block{k}} - \length{\block{k-2}} - 1 \\
0 &  \text{if } \length{\block{k}} - \length{\block{k-2}} \leq L \leq \length{\block{k}}
\end{cases} \, .\]
We can now compute the complexity from the value of $\comp( \length{\block{k-1}} + 1 ) $ and the growth. First we consider the case of $ \length{\block{k-1}} + 2 \leq L \leq \length{\block{k}} - \length{\block{k-2}} $:
\begin{align*}
\comp( L ) &= \comp( \length{\block{k-1}} + 1 ) + \sum_{j = \length{\block{k-1}}+1}^{L-1} \growth( j ) \\
&= ( \card{ \alphab_{k-1} } - 1 ) \cdot (\length{\block{k-1}}+1) + \mathbbm{1}_{ \alphab_{k} } (a_{k-1}) \cdot ( \length{\block{k-2}}+1 ) \\
& \qquad + (L - \length{\block{k-1}} - 1) \cdot ( \card{ \alphab_{k+1} } - 1 + \mathbbm{1}_{\alphab_{k}}(a_{k-1}) ) \\
&= ( \card{ \alphab_{k-1} } - \card{ \alphab_{k+1} } ) ( \length{\block{k-1}} + 1) - \mathbbm{1}_{\alphab_{k}}(a_{k-1}) \cdot ( \length{\block{k-1}} - \length{\block{k-2}} ) \\
& \qquad + ( \card{ \alphab_{k+1} } - 1 + \mathbbm{1}_{\alphab_{k}}(a_{k-1}) ) L 
\end{align*}
Similarly we obtain the complexity for $ \length{\block{k}} - \length{\block{k-2}} + 1 \leq L \leq \length{\block{k}} + 1 $:
\begin{align*}
\comp( L ) &= \comp( \length{\block{k-1}} + 1 ) + \sum_{ j = \length{\block{k-1}}+1 }^{L-1} \growth( j ) \\
&= ( \card{ \alphab_{k-1} } - 1 ) \cdot (\length{\block{k-1}}+1) + \mathbbm{1}_{ \alphab_{k} } (a_{k-1}) \cdot ( \length{\block{k-2}}+1 ) \\
& \qquad + ( \card{ \alphab_{k+1} } - 1 ) ( L - 1 - \length{\block{k-1}} ) + \mathbbm{1}_{\alphab_{k}}( a_{k-1} ) ( \length{\block{k-1}} - \length{\block{k-2}} ) \\
&= ( \card{ \alphab_{k-1} } - \card{ \alphab_{k+1} } ) ( \length{\block{k-1}} + 1) + \mathbbm{1}_{ \alphab_{k} } (a_{k-1}) \cdot ( \length{\block{k-1}} + 1 ) \\
& \qquad + ( \card{ \alphab_{k+1} } - 1 ) L \qedhere
\end{align*} 
\end{proof}

\begin{thm}[Complexity function III]
For $k \geq 1$ with $ n_{k} > 2 $, the complexity function in the range $ \length{\block{k-1}} + 2 \leq L \leq \length{\block{k}} + 1 $ is given by
\begin{align*}
\comp( L ) = (\length{\block{k-1}}+1)  + ( \card{ \alphab_{k} } - 1) L \, + & \begin{cases}
\mathbbm{1}_{\alphab_{k}}(a_{k-1})( L - 2 \length{\block{k-1}} + \length{\block{k-2}} - 1 ) \\
0 \\
- \mathbbm{1}_{ \cmplmt{ \alphab_{k+1} } }(a_{k}) ( L - \length{\block{k}} + \length{\block{k-1}} ) 
\end{cases} \\
& \begin{cases}
\text{if } \length{\block{k-1}} + 2 \leq L \leq 2 \length{\block{k-1}} - \length{\block{k-2}} + 1\\ 
\text{if } 2 \length{\block{k-1}} - \length{\block{k-2}} + 2 \leq L \leq \length{\block{k}} - \length{\block{k-1}}\\
\text{if } \length{\block{k}} - \length{\block{k-1}} +1 \leq L \leq \length{\block{k}} +1
\end{cases} \hspace{-1em} .
\end{align*}
\end{thm}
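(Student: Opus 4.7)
The plan is to mimic the proof of Theorem~\ref{thm:comp2} but with the range $[\length{\block{k-1}}+1, \length{\block{k}}]$ partitioned into three subintervals rather than two. First I would verify that when $n_{k} > 2$ the two "exceptional" intervals appearing in the growth formula of Corollary~\ref{coro:CPnAndGrowth} -- namely $[\length{\block{k-1}}+1,\, 2\length{\block{k-1}} - \length{\block{k-2}}]$ on which the bonus $+\mathbbm{1}_{\alphab_{k}}(a_{k-1})$ appears, and $[\length{\block{k}} - \length{\block{k-1}},\, \length{\block{k}}]$ on which the penalty $-\mathbbm{1}_{\cmplmt{\alphab_{k+1}}}(a_{k})$ appears -- are disjoint and ordered. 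This follows from the recursion $\length{\block{k}}+1 = n_{k}(\length{\block{k-1}}+1) \geq 3(\length{\block{k-1}}+1)$, which gives $\length{\block{k}} - \length{\block{k-1}} \geq 2\length{\block{k-1}} + 2 > 2\length{\block{k-1}} - \length{\block{k-2}} + 1$. Hence the growth $\growth(L)$ takes three distinct constant values on the three subintervals listed in the statement, and these three subintervals exactly account for the three cases in the claimed formula.

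Next I would compute $\comp(L)$ as a telescoping sum starting from $\comp(\length{\block{k-1}}+1)$, which by Corollary~\ref{coro:CPnAndGrowth} equals $(\card{\alphab_{k-1}} - 1)(\length{\block{k-1}}+1) + \mathbbm{1}_{\alphab_{k}}(a_{k-1})(\length{\block{k-2}}+1)$. For each of the three ranges of $L$, I would sum the appropriate growth values from $j = \length{\block{k-1}}+1$ up to $j = L-1$, splitting the sum at the thresholds $2\length{\block{k-1}} - \length{\block{k-2}}$ and $\length{\block{k}} - \length{\block{k-1}} - 1$ whenever $L$ lies past them. In the first range only the bonus contributes, in the second range both bonus and penalty have already been fully accumulated (bonus) or not yet started (penalty), and in the third range the penalty begins to subtract.

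Finally I would simplify the resulting expressions into the form claimed in the theorem. The key algebraic identity is $\card{\alphab_{k-1}} = \card{\alphab_{k}} + 1 - \mathbbm{1}_{\alphab_{k}}(a_{k-1})$, which follows from $\alphab_{k-1} = \alphab_{k} \cup \{a_{k-1}\}$ together with the standing assumption $a_{k-1} \neq a_{k}$ (cf.\ Remark~\ref{rem:wN-1An+1An}); this converts the coefficient $(\card{\alphab_{k-1}}-1)(\length{\block{k-1}}+1)$ coming from $\comp(\length{\block{k-1}}+1)$ into a contribution proportional to $(\card{\alphab_{k}}-1)L$ together with the constant $(\length{\block{k-1}}+1)$ visible in the theorem, plus bonus/penalty corrections. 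The relation $\card{\alphab_{k}} - \card{\alphab_{k+1}} = \mathbbm{1}_{\cmplmt{\alphab_{k+1}}}(a_{k})$ from Corollary~\ref{coro:CPnAndGrowth} is used in the third range to rewrite the accumulated penalty.

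The principal obstacle I anticipate is purely bookkeeping: juggling the two indicator functions $\mathbbm{1}_{\alphab_{k}}(a_{k-1})$ and $\mathbbm{1}_{\cmplmt{\alphab_{k+1}}}(a_{k})$ across three telescoping sums of varying length, and verifying that the three piecewise formulas agree at the boundary values $L = 2\length{\block{k-1}} - \length{\block{k-2}} + 1$ and $L = \length{\block{k}} - \length{\block{k-1}} + 1$, so that the displayed case-expression is internally consistent. Since there is no new combinatorial ingredient beyond the growth formula, the remainder is organized arithmetic.
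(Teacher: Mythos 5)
Your proposal follows the same route as the paper's own proof: verify that $n_{k}>2$ forces $\length{\block{k}}-\length{\block{k-1}}=(n_{k}-1)(\length{\block{k-1}}+1)>2\length{\block{k-1}}-\length{\block{k-2}}$ so the growth formula of Corollary~\ref{coro:CPnAndGrowth} splits into three ordered constant pieces, then telescope from $\comp(\length{\block{k-1}}+1)$ and simplify with $\card{\alphab_{k-1}}=\card{\alphab_{k}}+1-\mathbbm{1}_{\alphab_{k}}(a_{k-1})$ and $\card{\alphab_{k}}-\card{\alphab_{k+1}}=\mathbbm{1}_{\cmplmt{\alphab_{k+1}}}(a_{k})$. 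This is correct and matches the paper's argument in all essentials.
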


\begin{proof}
To shorten notation, we introduce the abbreviations
\begin{align*}
I_{1} &:= \mathbb{Z} \cap \big[ \length{\block{k-1}} + 2 \, , \, 2 \length{\block{k-1}} - \length{\block{k-2}} + 1 \big] \, ,\\
I_{2} &:=\mathbb{Z} \cap  \big[ 2 \length{\block{k-1}} - \length{\block{k-2}} + 2 \, , \, \length{\block{k}} - \length{\block{k-1}} \big] \, ,\\
I_{3} &:=\mathbb{Z} \cap  \big[ \length{\block{k}} - \length{\block{k-1}} +1 \, , \, \length{\block{k}} +1 \big] \, .
\end{align*}
We proceed similar to the proof of Theorem~\ref{thm:comp2}. Because of $ n_{k} > 2 $ we have $ \length{\block{ k }} - \length{\block{k-1}} = (n_{k} - 1) ( \length{\block{k-1}} + 1) > 2 \length{\block{k-1}} - \length{\block{ k-2 }} $ and the growth given in Corollary~\ref{coro:CPnAndGrowth} simplifies to
\[ \growth(L) = \card{ \alphab_{k} } - 1 + \begin{cases}
\mathbbm{1}_{\alphab_{k}}(a_{k-1}) &  \text{if } L+1 \in I_{1} \\
0 &  \text{if } L+1 \in I_{2} \\
- ( \card{ \alphab_{k} } - \card{ \alphab_{k+1} }  ) & \text{if } L+1 \in I_{3}
\end{cases} \, .\]
We compute the complexity from the value of $\comp( \length{\block{k-1}} + 1 ) $ and the growth:
\begin{align*}
& \comp( L ) = \comp( \length{\block{k-1}} + 1 ) + \sum_{j = \length{\block{k-1}}+1}^{L-1} \growth( j ) \\
& = ( \card{ \alphab_{k-1} } - 1 ) (\length{\block{k-1}}+1) + \mathbbm{1}_{ \alphab_{k} } (a_{k-1}) ( \length{\block{k-2}}+1 ) + ( \card{ \alphab_{k} } - 1) (L - \length{\block{k-1}}-1) \\
& \quad + \begin{cases}
\mathbbm{1}_{\alphab_{k}}(a_{k-1}) ( L - \length{\block{k-1}} - 1 ) & \text{if } L \in I_{1} \\
\mathbbm{1}_{\alphab_{k}}(a_{k-1})( \length{\block{k-1}} - \length{\block{k-2}} ) & \text{if } L \in I_{2} \\
\mathbbm{1}_{\alphab_{k}}(a_{k-1}) ( \length{\block{k-1}} - \length{\block{k-2}} ) - ( \card{ \alphab_{k} } - \card{ \alphab_{k+1} } ) ( L - \length{\block{k}} + \length{\block{k-1}} ) & \text{if } L \in I_{3}
\end{cases}\\
&= ( \card{ \alphab_{k-1} } - \card{ \alphab_{k} } ) (\length{\block{k-1}}+1) + \mathbbm{1}_{ \alphab_{k} } (a_{k-1}) (\length{\block{k-1}} + 1 ) + ( \card{ \alphab_{k} } - 1) L \\
& \quad + \begin{cases}
\mathbbm{1}_{\alphab_{k}}(a_{k-1})( L - 2 \length{\block{k-1}} + \length{\block{k-2}} - 1 ) & \text{if } L \in I_{1} \\
0 & \text{if } L \in I_{2} \\
- ( \card{ \alphab_{k} } - \card{ \alphab_{k+1} } ) ( L - \length{\block{k}} + \length{\block{k-1}} ) & \text{if } L \in I_{3}
\end{cases} \\
&= (\length{\block{k-1}}+1)  + ( \card{ \alphab_{k} } - 1) L + \begin{cases}
\mathbbm{1}_{\alphab_{k}}(a_{k-1})( L - 2 \length{\block{k-1}} + \length{\block{k-2}} - 1 ) & \text{if } L \in I_{1} \\
0 & \text{if } L \in I_{2} \\
- ( \card{ \alphab_{k} } - \card{ \alphab_{k+1} } ) ( L - \length{\block{k}} + \length{\block{k-1}} ) & \text{if } L \in I_{3}
\end{cases}
\end{align*}
Now the relation $  \card{ \alphab_{k} } - \card{ \alphab_{k+1} } = \mathbbm{1}_{ \cmplmt{ \alphab_{k+1} } }(a_{k}) $ yields the claim.
\end{proof}

Recall from Subsection~\ref{subsec:SimpleTWords} that $ \eventNr $ denotes the number from which on $ \alphab_{k} $ is equal to $\alphabEv$. For $ k \geq \eventNr + 1 $, the above formulas for the complexity simplify and the difference between $ n_{k} = 2 $ and $ n_{k} > 2 $ vanishes:

\begin{coro}
\label{coro:compAlphabEv}
For $ k  \geq \eventNr + 1 $ the complexity function in the range $ \length{\block{k-1}} + 2 \leq L \leq \length{\block{k}} + 1 $ is given by
\begin{align*}
\comp( L ) &= \begin{cases}
\card{ \alphabEv } \cdot L - \length{\block{k-1}} + \length{\block{k-2}} & \text{if } \length{\block{k-1}} + 2 \leq L \leq 2 \length{\block{k-1}} - \length{\block{k-2}} + 1 \\
( \card{ \alphabEv } - 1 ) \cdot L + \length{\block{k-1}} + 1 & \text{if } 2 \length{\block{k-1}} - \length{\block{k-2}} + 2 \leq L \leq \length{\block{k}} + 1
\end{cases} \, .
\end{align*}
\end{coro}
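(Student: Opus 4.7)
The plan is to derive this corollary as a direct specialisation of the two preceding complexity theorems (Complexity function II for $n_k = 2$ and Complexity function III for $n_k > 2$), exploiting the fact that for $k \geq \eventNr + 1$ the coding sequence has stabilised into $\alphabEv$.

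First I would extract the relevant simplifications that come from $k \geq \eventNr + 1$. Since $k-1$, $k$ and $k+1$ are all at least $\eventNr$, we have $\alphab_{k-1} = \alphab_{k} = \alphab_{k+1} = \alphabEv$, so every cardinality term $\card{\alphab_{k-1}}$, $\card{\alphab_{k}}$, $\card{\alphab_{k+1}}$ equals $\card{\alphabEv}$, and all differences like $\card{\alphab_{k-1}} - \card{\alphab_{k+1}}$ or $\card{\alphab_{k}} - \card{\alphab_{k+1}}$ vanish. Moreover $a_{k-1}$ and $a_{k}$ both lie in $\alphabEv$, so $\mathbbm{1}_{\alphab_{k}}(a_{k-1}) = 1$ and $\mathbbm{1}_{\cmplmt{\alphab_{k+1}}}(a_{k}) = 0$. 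These are the only inputs I need.

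Next I would handle the case $n_{k} > 2$ using Theorem~Complexity function~III. With the simplifications above, the additive correction on $I_{3}$ vanishes (its coefficient is $\mathbbm{1}_{\cmplmt{\alphab_{k+1}}}(a_{k}) = 0$), so the values on $I_{2}$ and $I_{3}$ agree and both give $\comp(L) = (\card{\alphabEv} - 1) L + \length{\block{k-1}} + 1$. On $I_{1}$, substituting $\mathbbm{1}_{\alphab_{k}}(a_{k-1}) = 1$ and combining the terms yields $\comp(L) = \card{\alphabEv} \cdot L - \length{\block{k-1}} + \length{\block{k-2}}$. The intervals $I_{1}$ and $I_{2} \cup I_{3}$ then coincide with the two ranges in the corollary.

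For the remaining case $n_{k} = 2$, I would apply Theorem~Complexity function~II. The identity $\length{\block{k}} = 2\length{\block{k-1}} + 1$ gives $\length{\block{k}} - \length{\block{k-2}} = 2\length{\block{k-1}} - \length{\block{k-2}} + 1$, so the two intervals in Theorem~II match the two intervals in the corollary. A short substitution $\card{\alphab_{k-1}} = \card{\alphab_{k+1}} = \card{\alphabEv}$ and $\mathbbm{1}_{\alphab_{k}}(a_{k-1}) = 1$ into the formula of Theorem~II reproduces exactly the two branches of the corollary. Hence in both cases the claimed formula holds. There is no real obstacle here; the whole proof is book-keeping, and the only mild subtlety worth stressing is that in the $n_{k}=2$ setting the interval boundary $\length{\block{k}} - \length{\block{k-2}}$ from Theorem~II must be rewritten in the $\length{\block{k-1}}$-language used in the corollary so that both cases are presented uniformly.
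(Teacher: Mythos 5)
Your proposal is correct and is exactly the derivation the paper intends: the corollary is stated as a direct specialisation of the two complexity theorems, using $\alphab_{k-1}=\alphab_{k}=\alphab_{k+1}=\alphabEv$, $\mathbbm{1}_{\alphab_{k}}(a_{k-1})=1$ and $\mathbbm{1}_{\cmplmt{\alphab_{k+1}}}(a_{k})=0$ for $k\geq\eventNr+1$. Your observation that $\length{\block{k}}-\length{\block{k-2}} = 2\length{\block{k-1}}-\length{\block{k-2}}+1$ when $n_{k}=2$ is precisely the point that makes the two cases merge into a single uniform formula.
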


\begin{exmpl}
For the Grigorchuk subshift we have $ \length{\block{k}} = 2^{k+1} - 1 $, $\card{ \alphab_{0} } = 4$, $ \card{ \alphabEv } = 3$ and $ \eventNr = 1$. In particular, $ \mathbbm{1}_{\alphab_{1}}(a_{0}) = 0 $ holds. From Proposition~\ref{prop:comp1} and Theorem~\ref{thm:comp2} for $k = 1$ and from Corollary~\ref{coro:compAlphabEv} for $ k \geq 2 $, we recover the complexity function that was given for the Grigorchuk subshift in \cite{GLN_Combinat}, Theorem 1:
\begin{align*}
\comp( L ) &= 3L + 1 \quad \text{for } 0 \leq L \leq 1 \, ,\\
\comp( L) &= 3 L \hspace{1.7em} \quad \text{for } L = 2 \, , \\
\comp( L) &= 2 L + 2 \quad \text{for } 3 \leq L \leq 4 \\
\text {and} \quad \comp( L ) &= \begin{cases}
3 \cdot L - 2^{k} + 2^{k-1} & \text{if } 2^{k} + 1 \leq L \leq 2^{k+1} -  2^{k-1} \\
2 \cdot L + 2^{k} & \text{if } 2^{k+1} - 2^{k-1} + 1 \leq L \leq 2^{k+1}
\end{cases} \quad \text{for } k \geq 2 \, .
\end{align*}
\end{exmpl}

\begin{prop}
\label{prop:MinMaxQuotient}
For $ k \geq \eventNr + 1 $ and $ \length{\block{k-1}}+2 \leq L \leq \length{\block{k}}+1 $, the quotient $ \frac{\comp( L )}{L} $ lies between
\[ \card{ \alphabEv } - 1 < \min \Big\{ \card{ \alphabEv - \frac{n_{k}-1}{n_{k}} ,\, \card{ \alphabEv } - \frac{n_{k-1}-1}{n_{k-1}}} \Big\} \leq \min_{ \length{\block{k-1}}+2 \leq L \leq \length{\block{k}}+1 } \frac{\comp( L )}{L} \]
and
\[ \max_{ \length{\block{k-1}}+2 \leq L \leq \length{\block{k}}+1 } \frac{\comp( L )}{L} = \card{ \alphabEv } - \frac{ n_{k-1}-1 }{ 2 n_{k-1} - 1 } \leq \card{ \alphabEv } - \frac{1}{3} \, ,\]
where the maximum is taken at $ L = 2 \length{\block{k-1}} - \length{\block{k-2}} + 1 $.
\end{prop}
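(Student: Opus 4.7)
The plan is to apply Corollary~\ref{coro:compAlphabEv} directly. It gives two explicit piecewise-linear expressions for $\comp(L)$ on the range $[\length{\block{k-1}}+2,\length{\block{k}}+1]$, so dividing by $L$ reduces the problem to optimising elementary functions of $L$ on two subintervals. Concretely, write
\[ f_{1}(L) := \frac{\comp(L)}{L} = \card{\alphabEv} - \frac{\length{\block{k-1}}-\length{\block{k-2}}}{L} \]
on the first subinterval $I_{1} := [\length{\block{k-1}}+2,\, 2\length{\block{k-1}}-\length{\block{k-2}}+1]$ and
\[ f_{2}(L) := \frac{\comp(L)}{L} = (\card{\alphabEv}-1) + \frac{\length{\block{k-1}}+1}{L} \]
on the second subinterval $I_{2} := [2\length{\block{k-1}}-\length{\block{k-2}}+2,\, \length{\block{k}}+1]$. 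Since $\length{\block{k-1}}>\length{\block{k-2}}$, the function $f_{1}$ is strictly increasing, while $f_{2}$ is strictly decreasing. Hence the maximum of the quotient is attained either at the right endpoint of $I_{1}$ or at the left endpoint of $I_{2}$; the minimum is attained at one of the two outer endpoints $L=\length{\block{k-1}}+2$ and $L=\length{\block{k}}+1$.

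For the maximum I would evaluate $f_{1}$ at $L^{*} := 2\length{\block{k-1}}-\length{\block{k-2}}+1$. Using the recursion $\length{\block{k-1}}+1=n_{k-1}(\length{\block{k-2}}+1)$ one computes
\[ L^{*} = (2n_{k-1}-1)(\length{\block{k-2}}+1) \quad\text{and}\quad \length{\block{k-1}}-\length{\block{k-2}} = (n_{k-1}-1)(\length{\block{k-2}}+1), \]
which gives $f_{1}(L^{*}) = \card{\alphabEv} - \frac{n_{k-1}-1}{2n_{k-1}-1}$. A direct check (using monotonicity of $n\mapsto \frac{n-1}{2n-1}$ for $n\geq 2$) shows $f_{1}(L^{*}) \geq f_{2}(L^{*}+1)$, so the maximum is indeed attained at $L^{*}$. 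The bound $\card{\alphabEv}-\frac{1}{3}$ then follows from $n_{k-1}\geq 2$.

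For the minimum, the right endpoint of $I_{2}$ gives
\[ f_{2}(\length{\block{k}}+1) = (\card{\alphabEv}-1) + \frac{\length{\block{k-1}}+1}{n_{k}(\length{\block{k-1}}+1)} = \card{\alphabEv} - \frac{n_{k}-1}{n_{k}}, \]
so the second quantity in the claimed lower bound is actually attained. For the left endpoint of $I_{1}$, a short cross-multiplication (using $\length{\block{k-1}}+1 = n_{k-1}(\length{\block{k-2}}+1)$) shows
\[ \frac{\length{\block{k-1}}-\length{\block{k-2}}}{\length{\block{k-1}}+2} < \frac{n_{k-1}-1}{n_{k-1}}, \]
so $f_{1}(\length{\block{k-1}}+2) > \card{\alphabEv} - \frac{n_{k-1}-1}{n_{k-1}}$. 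Combining the two endpoint estimates gives $\min_{L}\comp(L)/L \geq \min\{\card{\alphabEv}-\frac{n_{k}-1}{n_{k}},\, \card{\alphabEv}-\frac{n_{k-1}-1}{n_{k-1}}\}$. The strict inequality $\card{\alphabEv}-1 < \card{\alphabEv}-\frac{n-1}{n}$ for $n\geq 2$ yields the left-most inequality in the statement.

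The only mildly delicate step is the identification of $f_{1}(L^{*})$ with $\card{\alphabEv}-\frac{n_{k-1}-1}{2n_{k-1}-1}$ and the verification that this exceeds $f_{2}$ at the neighbouring point $L^{*}+1$; everything else is straightforward monotonicity together with the block length recursion.
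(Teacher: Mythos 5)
Your proposal is correct and follows essentially the same route as the paper: apply Corollary~\ref{coro:compAlphabEv}, use the monotonicity of the two branches of $\comp(L)/L$ to reduce to the four candidate points, and evaluate them via the block-length recursion. The only quibble is that your parenthetical justification for $f_{1}(L^{*})\geq f_{2}(L^{*}+1)$ (monotonicity of $n\mapsto\frac{n-1}{2n-1}$) is not the relevant fact --- the comparison follows directly from $f_{1}(L^{*})=\card{\alphabEv}-1+\frac{\length{\block{k-1}}+1}{L^{*}}=f_{2}(L^{*})>f_{2}(L^{*}+1)$, which is exactly the computation the paper performs.
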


\begin{proof}
From Corollary~\ref{coro:compAlphabEv} we obtain
\begin{align*}
\frac{ \comp( L ) }{L} &= \begin{cases}
\card{ \alphabEv } - \frac{ \length{\block{k-1}} - \length{\block{k-2}} }{L} & \text{if } \length{\block{k-1}} + 2 \leq L \leq 2 \length{\block{k-1}} - \length{\block{k-2}} + 1 \\
\card{ \alphabEv } - 1 + \frac{ \length{\block{k-1}} + 1}{L} & \text{if } 2 \length{\block{k-1}} - \length{\block{k-2}} + 2 \leq L \leq \length{\block{k}} + 1
\end{cases}
\end{align*}
for $ k \geq \eventNr + 1 $. Therefore, the maximum in the interval $ [ \length{\block{k-1}}+2 \, ,\; \length{\block{k}}+1 ]$ is taken at either $ L_{1} := 2 \length{\block{k-1}} - \length{\block{k-2}} + 1 $ or $ L_{2} := 2 \length{\block{k-1}} - \length{\block{k-2}} + 2 $. Direct computation shows that the value at $L_{1}$ is greater:
\[ \frac{ \comp( L_{1} ) }{L_{1}} = \card{ \alphabEv } - \frac{ \length{\block{k-1}} - \length{\block{k-2}} }{ 2 \length{\block{k-1}} - \length{\block{k-2}} + 1 } = \card{ \alphabEv } - 1 + \frac{ \length{\block{k-1}} + 1 }{ 2 \length{\block{k-1}} - \length{\block{k-2}} + 1 } >  \frac{ \comp( L_{2} ) }{L_{2}} \, . \]
Thus the maximum is taken at $L_{1}$ and by dividing both numerator and denominator of $ \frac{\comp( L_{1} )}{L_{1}} $ by $ \length{\block{k-2}} + 1 $, we obtain the claimed expression. Similarly, the minimum in the interval is taken at either $ L_{3} = \length{\block{k-1}} + 2 $ or at $ L_{4} := \length{\block{k}} + 1 $. While the value of the minimum depends on the values of $n_{k-1}$ and $n_{k}$, a short calculation yields the claim since
\[ \frac{ \comp( L_{3} ) }{L_{3}} = \card{ \alphabEv } - \frac{ \length{\block{k-1}} - \length{\block{k-2}} }{ \length{\block{k-1}} + 2 } > \card{ \alphabEv } - \frac{ ( \length{\block{k-1}} +1)(1 - \frac{1}{n_{k-1}} ) }{ \length{\block{k-1}} + 1 } =  \card{ \alphabEv } - \frac{n_{k-1}-1}{n_{k-1}} \]
and
\[ \frac{ \comp( L_{4} ) }{L_{4}} = \card{ \alphabEv } - 1 + \frac{ \length{\block{k-1}} + 1}{ \length{\block{k}} + 1 } = \card{ \alphabEv } - 1 + \frac{1}{ n_{k} } = \card{ \alphabEv } - \frac{n_{k}-1}{ n_{k} } \, . \qedhere \]
\end{proof}

\begin{rem}
In particular, Proposition~\ref{prop:MinMaxQuotient} yields
\[ \card{ \alphabEv } - 1 \leq \liminf_{L \to \infty} \frac{ \comp( L ) }{L} \leq \limsup_{L \to \infty} \frac{ \comp( L ) }{L} \leq \card{ \alphabEv } - \frac{1}{3} <  \card{ \alphabEv } \, . \]
Besides the regularity of the Toeplitz word, this is an alternative way to prove unique ergodicity for simple Toeplitz words with $ \card{ \alphabEv } \leq 3 $: By Theorem~1.5 in \cite{Boshernitzan_UniErgodic}, every minimal subshift with $ \limsup_{L \to \infty} \frac{\comp( L )}{L} < 3 $ is uniquely ergodic. Such a proof was for example used in \cite{DKMSS_Regul-Article}.
\end{rem}


\section{De Bruijn Graphs and Palindrome Complexity}
\label{sec:DeBruijnPali}

In this section, we investigate a sequence of graphs which are called de Bruijn graphs or Rauzy graphs. The $L$-th graph in this sequence encodes in its vertices which words of length $L$ occur in the subshift. In its edges, it shows by which letter(s) a word can be extended to the right or to the left. To construct the graphs, we rely heavily on the results about the complexity and their proofs from the previous Section~\ref{sec:SubwordComp}. The construction is carried out in detail in the first subsection. In the second subsection, palindromes are discussed. We prove that reflection symmetry in the graphs corresponds to reflection symmetry of the words. This yields an explicit formula for the palindrome complexity.

\subsection{Description of the de Bruijn Graphs}
\label{subsec:DescripdeBruijn}

Let $\subshift$ be a subshift. The $L$-th de Bruijn graph $\debruijn{L} = (\vertices{L}, \edges{L}) $ associated to $\subshift$ is a directed graph with vertices
\[ \vertices{L} := \{ u \in \langu{\subshift} : \length{u} = L \} \]
and an edge $ (u, v) \in \edges{L} \subseteq \vertices{L} \times \vertices{L} $ from $ u = u(1) \hdots u(L) \in \vertices{L}$ to $ v = v(1) \hdots v(L) \in \vertices{L}$ if
\[ u(2) \hdots u(L) = v(1) \hdots v(L-1) \quad  \text{and} \quad  u \; v(L) =  u(1) \; v \in \langu{\subshift} \]
hold. The edges can be interpreted as words of length $L+1$, thus encoding the possible extensions of words of length $L$. Hence, the results from the previous Section~\ref{sec:SubwordComp} about right special words and their extensions describe the branching points of the graph and their branching behaviour.  We know from the proof of Proposition~\ref{prop:geq2} that the suffix $v_{1}$, that consists of the last $L$ letters of $\block{k}$, is right special and can be extended by all letters in $\alphab_{k}$ if $k \geq 0$ and $L \leq \length{\block{k}} - \length{\block{k-1}} - 1 $. Another right special word is given in the proof of Proposition~\ref{prop:geq3} for $ L \geq \length{ \block{ 0 }} +1 $. We will discuss its implications later, but first we consider $ 1 \leq L \leq \length{ \block{ 0 }} = n_{0} - 1 $, where Proposition~\ref{prop:geq3} does not apply.

The suffix of length $ L \leq \length{ \block{0} } $ of $\block{0}$ is given by $v_{1} = a_{0}^{L}$. For $ L \leq \length{\block{0}} - \length{\block{-1}} - 1 = n_{0} - 2  $, the extension with all letters in $\alphab_{0}$ is possible. Extension by $a_{0}$ yields an edge back to the vertex $v_{1}$. Extension by a letter $b \in \alphab_{0} \setminus \{ a_{0} \} $ yields an edge from $v_{1}$ to the word $a_{0}^{L-1} \, b $. Since $v_{1}$ is the prefix of $\block{0}$ as well, the word $v_{1} b v_{1}$ exists and we get a loop with $ L+1 $ edges from $v_{1}$ via $a_{0}^{L-1} \, b $ back to $v_{1}$. The words encountered in the loop are clearly pairwise different and also different from all words encountered when we extend $v_{1}$ with a letter different from $b$. We obtain the graph shown in Figure~\ref{fig:deBruijn1ToL0}.  For $L = n_{0}-1$, the extensions with letters $b \neq a_{0} $ remain the same, but the edge from $v_{1}$ directly back to itself exists if and only if $a_{0} \in \alphab_{1}$ holds.

\begin{figure}
\centering
\footnotesize
\begin{tikzpicture}
[ every path/.style = {shorten <=1pt, shorten >=1pt, >=stealth},
  vertex/.style = {circle, minimum size=17pt, inner sep=2pt, draw}]

\node (dotstop) {$\hdots$};
\node [vertex] (vtopleft)  [left = of dotstop] {};
\node [vertex] (vtopright) [right = of dotstop] {};
\node (dotsmiddle) [below = of dotstop] {$\vdots$};
\node (dotsbottom) [below = of dotsmiddle] {$\hdots$};
\node [vertex] (vbottomleft) [left = of dotsbottom] {};
\node [vertex] (vbottomright) [right = of dotsbottom] {};
\node [vertex] (w0w0) [below = of dotsbottom] {$v_{1}$};
\graph{
(w0w0) -> [controls ={ +(10em, -5ex) and + (8em, 0)}] (vtopright.east);
(vtopright.west)  -> (dotstop) -> (vtopleft.east);
(w0w0) <-  [controls ={ +(-10em, -5ex) and + (-8em, 0ex)}] (vtopleft.west);
(w0w0) -> [controls ={ +(10em, 0) and + (10em, 0)}, edge node ={ node [at end, above, xshift = -1em, yshift = 1.5ex] {$L + 1$ edges on each arc} }] (dotsmiddle.north east);
(w0w0) <- [controls ={ +(-10em, 0) and + (-10em, 0)}]  (dotsmiddle.north west);
(w0w0) -> [controls ={ +(8.5em, 5ex) and + (8.5em, 0)}]  (dotsmiddle.south east);
(w0w0) <- [controls ={ +(-8.5em, 5ex) and + (-8.5em, 0)}] (dotsmiddle.south west);
(w0w0) -> [bend right = 15, edge node= {node [right, yshift=-1ex, fill = white, inner sep = 0] {{$\underbrace{\hspace{1.8cm}}_{\card{ \alphab_{0} } - 1 }$} } } ] (vbottomright) -> (dotsbottom) -> (vbottomleft)  -> [bend right = 15] (w0w0);
};
\draw [->] (w0w0.south east) .. controls +(1.6cm, -1.8cm) and +(-1.6cm, -1.8cm) .. node [right, near start, xshift=0.5em, align = left]{for $L = n_{0}-1$ this edge exists\\ if and only if $ a_{0} \in \alphab_{1} $ holds} (w0w0.south west);
\end{tikzpicture}
\normalsize
\caption{The de Bruijn graph for $ 1 \leq L \leq \length{ \block{ 0 }} $ .}
\label{fig:deBruijn1ToL0}
\end{figure}

Next we turn to the case $ k \geq 1 $ and $ \length{ \block{ k-1 }}+1 \leq L \leq \length{ \block{ k }} $. To keep the notation short, we define $r := L \mod (\length{ \block{ k-1 }} + 1)$. By $u_{1}$ (respectively, $v_{1}$) we denote the prefix (respectively, the suffix) of length $L$ of $\block{k}$. It follows from the decomposition $ \block{k} = \block{k-1} a_{k} \block{k-1} a_{k} \hdots a_{k} \block{k-1} $ that $v_{1}$ is obtained when $u_{1}$ is shifted $ \length{ \block{ k-1 }}-r $ positions to the right.

As before we know that $v_{1}$ can be extended to the right by all letters in $ \alphab_{k} $ if $ \length{ \block{ k-1 }} + 1 \leq L \leq \length{ \block{ k }} - \length{ \block{ k-1 }} - 1 $ holds (see  Proposition~\ref{prop:geq2}). For $ \length{ \block{ k }} - \length{ \block{ k-1 }}  \leq L \leq \length{ \block{ k }}$, the extension of $v_{1}$ by $a_{k}$ exists if and only if $a_{k} \in \alphab_{k+1}$ holds. When $v_{1}$ is extended with a letter in $\alphab_{k} \setminus \{ a_{k} \}$, then shifting $v_{1}$ by $ L+1 $ positions along this extension will result in $u_{1}$. When $v_{1}$ is extended with $a_{k}$, then already a shift by $ r+1 $ positions will result in $u_{1}$.  This yields the graph in Figure~\ref{fig:deBruijnAllg1}. If Proposition~\ref{prop:geq3} doesn't apply because either $a_{k-1} \notin \alphab_{k} $ or $ 2 \length{\block{k-1}} - \length{\block{k-2}} < L $ holds, then this is the de Bruijn graph for $L$.

\begin{figure}
\centering
\footnotesize
\begin{tikzpicture}
[ every path/.style = {shorten <=1pt, shorten >=1pt, >=stealth, absolute, draw},
  vertex/.style = {circle, minimum size=14pt, inner sep=1pt, draw}]
\matrix[row sep = 3ex, column sep = 3em]{
& \node [vertex] (zu) {}; & \node (dotstopz) {$\cdots$}; & \node [vertex] (vz) {}; & \\
& & \node (dotstopy2){$\cdots$}; & &\\
& & \node (dotstopy1) {$\cdots$}; & &\\
& \node [vertex] (xu) {}; & \node (dotstopx) {$\cdots$}; & \node [vertex] (vx) {}; & \\
\node [vertex] (u) {$u_{1}$}; & \node [vertex] (ux) {}; & \node (dotsmiddle) {$\cdots$}; & \node [vertex] (xv) {};	& \node [vertex] (v) {$v_{1}$}; \\
& \node [vertex] (tu) {}; &  \node (dotsbottom) {$\cdots$}; & \node [vertex] (vt) {}; & \\
};
\graph [use existing nodes]{
u -> ux ->[edge node= {node [above, xshift = 2.5em] {$\length{ \block{ k-1 }} - r $ edges} }]  dotsmiddle -> xv -> v -> {
	vt [> out = 240, > in = 0,  target edge node = { node [right, xshift = 1em, yshift = -1.5ex, align = left] {this arc exists for $ L \geq \length{ \block{ k }} - \length{ \block{ k-1 }} $ \\ if and only if $ a_{k} \in \alphab_{k+1} $ holds }}] -> dotsbottom [target edge node= {node [below, xshift = 2em, yshift=-1.5ex] {$ v_{1}|_{[2, L]} a_{k} $} }, source edge node= {node [below, xshift = 1.5em] {$ r+1 $ edges} }] -> tu [< out = 180, < in = 300],
	vx [> out = 120, > in = 0] -> dotstopx -> xu [< out = 180, < in = 60],	
	dotstopy1 [> out = 90, > in = 0, target edge node = {node [above, near end] {$\vdots$} } , < out = 180, < in = 90, source edge node = {node [above, near start] {$\vdots$} }],	
	dotstopy2 [> out = 60, > in = 0, < out = 180, < in = 120],	
	vz [> out =30, > in = 340, target edge node = {node [xshift = -1em, yshift = -8ex, fill = white ,inner sep = 1pt] { $\overbrace{\hspace{1.4cm}}^{\card{ \alphab_{k} } -1 }$} }] -> dotstopz [target edge node= {node [above, xshift = 6.5em, yshift=1.5ex] {$ v_{1}|_{[2, L]} \, a \quad $with $ a \in  \alphab_{k} \setminus \{ a_{k} \} $} }]  -> [edge node= {node [below, xshift = 2.5em, yshift=-2ex] {$ L + 1 $ edges on each arc} }] zu [< out = 200, < in = 150]
} -> u;
};
\end{tikzpicture}
\normalsize
\caption{The de Bruijn graph $\debruijn{L}$ for $ k \geq 1 $ and $ \length{ \block{ k-1 }} + 1 \leq L \leq \length{ \block{ k }} $, when either $a_{k-1} \notin \alphab_{k} $ or $ L > 2 \length{\block{k-1}} - \length{\block{k-2}} $ holds. The number of edges of an arc refers to the distance between $u_{1}$ and $v_{1}$.}
\label{fig:deBruijnAllg1}
\end{figure}

If Proposition~\ref{prop:geq3} applies, then the de Bruijn graph has to be adjusted, since there is another right special word $v_{2} \neq v_{1}$. It is the suffix of length $L$ of $ \block{k-1} a_{k-1} \block{k-1} $ and can be extended with both, $a_{k-1}$ and $a_{k}$. Note that every suffix of $ \block{k-1} a_{k-1} \block{k-1} $ is contained in $ \block{k} a_{k-1} \block{k} $. To see how many positions we have to shift from the suffix $v_{1}$ of the first $ \block{k} $ to reach $ v_{2} $, we observe that after the shift, the right end of the word has to align with the right end of a $\block{k-2}$-block. When we reach $v_{2}$ for the first time, the leftmost letter of the word has to be in the leftmost $ \block{k-2} a_{k-1} $ of the rightmost $\block{k-1}$-block of the left $\block{k}$-block, see Figure~\ref{fig:v1v2Location}.

\begin{figure}
\centering
\footnotesize

\begin{tikzpicture}
[every node/.style ={rectangle, inner sep=0pt, minimum height=0.6cm},
pn/.style={minimum width=4.87cm, draw},
pn-1/.style={minimum width=2.18cm, draw},
pn-2/.style={minimum width=0.39cm, draw},
buchst/.style={minimum width=0.28cm, draw}]
\node [right] at (0,0) [pn] (A) {$\block{k}$};
\node [buchst] (A) [right=0.1cm of A] {$b$};
\node [pn] (A) [right=0.1cm of A] {$\block{k}$};
\node [right] at (0,-1) [pn-1] (A) {$\block{k-1}$};
\node [buchst] (A) [right=0.1cm of A] {$c$};
\node [pn-1] (A) [right=0.1cm of A] {$\block{k-1}$};
\node [buchst] (A) [right=0.1cm of A] {$b$};
\node [pn-1] (A) [right=0.1cm of A] {$\block{k-1}$};
\node [buchst] (A) [right=0.1cm of A] {$c$};
\node [pn-1] (A) [right=0.1cm of A] {$\block{k-1}$};
\node [right=0.5cm of A, align=left] {with $b = a_{k-1}$\\ \, and $c = a_{k}$};
\node [right] at (0,-2) [pn-1] (A) {$\block{k-1}$};
\node [buchst] (A) [right=0.1cm of A] {$c$};
\node[pn-2] (Fix1) [right=0.1cm of A] {$$};
\node [buchst] (A) [right=0.1cm of Fix1] {$b$};
\node[pn-2] (A) [right=0.1cm of A] {$$};
\node [buchst] (A) [right=0.1cm of A] {$b$};
\node[pn-2] (A) [right=0.1cm of A] {$$};
\node [buchst] (A) [right=0.1cm of A] {$b$};
\node[pn-2] (Fix2) [right=0.1cm of A] {$$};
\node [buchst] (A) [right=0.1cm of Fix2] {$b$};
\node[pn-2] (Fix3) [right=0.1cm of A] {$$};
\node [buchst] (A) [right=0.1cm of Fix3] {$b$};
\node[pn-2] (A) [right=0.1cm of A] {$$};
\node [buchst] (A) [right=0.1cm of A] {$c$};
\node [pn-1] (A) [right=0.1cm of A] {$\block{k-1}$};
\node [minimum height=0cm] (A) [below=0.15 of Fix1.south] {};
\node [minimum height=0cm] (B) [below=0.15 of Fix2.south east] {};
\node [minimum height=0cm] (C) [below=0.3 of Fix1.south] {};
\node [minimum height=0cm] (D) [below=0.3 of Fix3.south east, label={right: \; $v_{2}$}] {};
\draw (A) -- (B);
\draw (C) -- (D);
\end{tikzpicture}

\normalsize
\caption{Location of the first instance of $v_{2}$ that is reached by shifting from $v_{1}$: two examples for different word lengths $L$.}
\label{fig:v1v2Location}
\end{figure}

Let $ \widetilde{r} := L \mod ( \length{\block{ k-2 }} + 1 ) $. Starting in $v_{1}$, we reach the branching point $v_{2}$ for the first time after $ r + 1 + \length{\block{ k-2 }} - \widetilde{r} $ shifts. The vertex $v_{2}$ lies on the arc that represents the extension of $v_{1}$ by $a_{k-1} \in \alphab_{k} \setminus \{ a_{k} \}$. When we follow the arc from $v_{1}$ to $v_{2}$, we pass through the prefix of length $L$ of $ \block{ k-1 } a_{k-1} \block{ k-1 } $. Denote this prefix by $u_{2}$. We reach it after $ r+1 $ shifts from $v_{1}$. Continuing from $u_{2}$ along the arc, we reach $v_{2}$, where the arc splits in two paths. One path is the extension of $v_{2}$ with $a_{k-1}$. On this path, we reach the prefix $u_{2}$ after $ \widetilde{r}+1 $ shifts and upon further shifting proceed once more to $v_{2}$. The other path is the extension of $v_{2}$ with $a_{k}$. This path leads us from $v_{2}$ to $u_{1}$, which is reached after $ L - \length{ \block{ k-1 }} = r+1 $ shifts. When we take these facts into account, we obtain the de Bruijn graph as shown in Figure~\ref{fig:deBruijnAllg2}.

\begin{figure}
\centering
\footnotesize
\begin{tikzpicture}
[ every path/.style = {shorten <=1pt, shorten >=1pt, >=stealth, absolute, draw},
  vertex/.style = {circle, minimum size=14pt, inner sep=1pt, draw}]
\matrix[row sep = 3ex, column sep = 1.5em]{
& \node [vertex] (zu) {}; & & \node (dotstopz) {$\cdots$}; & & \node [vertex] (vz) {}; & \\
& & & \node (dotstopy2){$\cdots$}; & & &\\
& & & \node (dotstopy1) {$\cdots$}; & & &\\
& \node [vertex] (xu) {}; & & \node (dotstopx) {$\cdots$}; & & \node [vertex] (vx) {}; & \\
\node [vertex] (u) {$u_{1}$}; & \node [vertex] (ux) {}; & & \node (dotsmiddle) {$\cdots$}; & & \node [vertex] (xv) {}; & \node [vertex] (v) {$v_{1}$}; \\
& \node (dotsbottoms1) {$\cdots$}; & \node [vertex] (vtilde) {$v_{2}$}; & \node (dotsbottoms2) {$\cdots$}; & \node [vertex] (utilde) {$u_{2}$}; & \node (dotsbottoms3) {$\cdots$}; & \\
& & & \node (dotsbottomr){$\cdots$}; & & &\\
& \node [vertex] (tu) {}; & & \node (dotsbottomt) {$\cdots$}; & & \node [vertex] (vt) {}; & \\
};
\graph [use existing nodes]{
u -> ux ->[edge node= {node [above, xshift = 3em] {$\length{ \block{ k-1 }} - r $ edges} }]  dotsmiddle -> xv -> v -> {
	vt [> out = 270, > in = 0, target edge node = {node [xshift = 8.5em, yshift = -2.5ex, align = left, fill=white] {this arc exists for  $ L \geq \length{ \block{ k }} - \length{ \block{ k-1 }}$\\ if and only if $ a_{k} \in \alphab_{k+1} $ holds} }] -> dotsbottomt [target edge node= {node [xshift = 3.5em, yshift=-3.5ex] {$ v_{1}|_{[2, L]} a_{k} $} }, source edge node= {node [below, xshift = 3em] {$ r+1 $ edges}}] -> tu [< out = 180, < in = 270],	
	dotsbottoms3 [> out = 240, > in = 0, target edge node = {node [xshift = 3em, yshift=-4ex, inner sep = 0.5ex, fill = white] {$r+1$ edges between $v_{1}$ and $u_{2}$}}] -> utilde -> dotsbottoms2 -> vtilde [target edge node = {node [xshift = 1.7em, yshift = 3ex] {$ \length{ \block{ k-2 }} - \widetilde{r} $ edges between $u_{2}$ and $v_{2}$} }] -> dotsbottoms1 [< out = 180, < in = 300, source edge node = { node [xshift = -3em, yshift = -4ex, inner sep=0.5ex, fill = white] {$r+1$ edges between $v_{2}$ and $u_{1}$} }],
	vx [> out = 120, > in = 0] -> dotstopx -> xu [< out = 180, < in = 60],	
	dotstopy1 [> out = 90, > in = 0, target edge node = {node [above, near end] {$\vdots$} } , < out = 180, < in = 90, source edge node = {node [above, near start] {$\vdots$} }],	
	dotstopy2 [> out = 60, > in = 0, < out = 180, < in = 120],	
	vz [> out =30, > in = 340, target edge node = {node [xshift = -1.3em, yshift = -8.2ex, fill = white, inner sep = 0.5pt] { $\overbrace{\hspace{1.2cm}}^{ \card{ \alphab_{k} } - 2}$} }] -> dotstopz [target edge node= {node [xshift = 8.5em, yshift=3.5ex] {$ v_{1}|_{[2, L]} \, a \quad $with $ a \in  \alphab_{k} \setminus \{ a_{k-1}, a_{k} \} $} }] -> [edge node= {node [xshift = 3.5em, yshift=-3.5ex] {$ L + 1 $ edges on each arc} }] zu [< out = 200, < in = 150]
} -> u;
vtilde ->[out=270, in= 180, edge node= {node [at end, below, xshift = 1em]{$ \widetilde{r}+1 $ edges between $v_{2}$ and $u_{2}$} }] dotsbottomr ->[out = 0, in = 270] utilde;
};
\end{tikzpicture}
\normalsize
\caption{The de Bruijn graph $\debruijn{L}$ for $ k \geq 1 $ and $ \length{ \block{ k-1 }}+1 \leq L \leq 2 \length{\block{k-1}} - \length{\block{k-2}}$ when $a_{k-1} \in \alphab_{k} $ holds. Unless stated otherwise, the number of edges refers to the distance between $u_{1}$ and $v_{1}$.}
\label{fig:deBruijnAllg2}
\end{figure}

From what we have seen in Section~\ref{sec:SubwordComp} it follows that this describes the de Bruijn graph completely. More precisely, we know from Corollary~\ref{coro:CPnAndGrowth} that there are no other branching points besides the ones described in Proposition~\ref{prop:geq2} and \ref{prop:geq3} and that these cannot branch into more paths than discussed. Moreover, we know that every word of length $ L \leq \length{ \block{ k }} $ is contained in a word $ \block{k} a \block{k} $ with $ a \in \alphab_{k+1} $ (see Proposition~\ref{prop:enthalten}). For all $a \in \alphab_{k+1}$, the extension of $v_{1}$ by $a$ is included in the graph, together with all other words that follow when we shift further. Thus, neither edges nor vertices are missing.

\begin{exmpl}
For the Grigorchuk subshift, $\length{ \block{ 0 }} = 1$ and $a = a_{0} \notin \alphab_{1}$ hold. Thus, we obtain the graph $\debruijn{1}$ as shown in Figure~\ref{fig:deBruGrigo1} as a special case of Figure~\ref{fig:deBruijn1ToL0}. Because of $a_{0} \notin \alphab_{1}$, it is Figure~\ref{fig:deBruijnAllg1} which describes the de Bruijn graph for $k=1$. The result is shown in the Figures~\ref{fig:deBruGrigo2} and \ref{fig:deBruGrigo3}. For $ k \geq 2 $, we always have $a_{k-1} \in \alphab_{k}$. Therefore the graph $\debruijn{L}$ is given by Figure~\ref{fig:deBruijnAllg2} for $ \length{ \block{ k-1 }}+1 \leq L \leq \length{\block{k}} - \length{\block{k-2}} - 1 $ and by Figure~\ref{fig:deBruijnAllg1} for $ \length{\block{k}} - \length{\block{k-2}} \leq L \leq \length{\block{k}} $. The existence condition for the bottommost arc in each figure is always satisfied, since $a_{k} \in \alphab_{k+1} $ holds for all $ k \geq 1 $. Using $ \alphab_{k} = \{ a_{k-1}, a_{k}, a_{k+1} \} $ and $\length{ \block{ k }} = 2^{k+1} - 1$, we obtain the graphs shown in the Figures~\ref{fig:deBruGrigoAllg1} and \ref{fig:deBruGrigoAllg2}.
\end{exmpl}

\begin{figure}
\centering
\footnotesize
%
\begin{minipage}[b]{0.3\textwidth}
\centering
\begin{tikzpicture}
[ every path/.style = {shorten <=1pt, shorten >=1pt, >=stealth},
  vertex/.style = {circle, minimum size=17pt, inner sep=2pt, draw}]
\node [vertex] (x) {$x$};
\node [vertex] (y)  [below = of x] {$y$};
\node [vertex] (z) [below = of y] {$z$};
\node [vertex] (a) [below = of z] {$a$};
\graph{
(a) -> [bend right = 90] (x) -> [bend right = 90] (a);
(a) -> [bend right = 60] (y) -> [bend right = 60] (a);
(a) -> [bend right = 30] (z) -> [bend right = 30] (a);
};
\end{tikzpicture}
\subcaption{$\debruijn{1}$ with $v_{1} = a$\label{fig:deBruGrigo1}}
\end{minipage}
\hfill
%
\begin{minipage}[b]{0.3\textwidth}
\centering
\begin{tikzpicture}
[ every path/.style = {shorten <=1pt, shorten >=1pt, >=stealth, absolute, draw},
  vertex/.style = {circle, minimum size=14pt, inner sep=1pt, draw}]
\matrix[row sep = 4ex, column sep = 2em]{
& \node [vertex] (ya) {$ya$}; & \node [vertex] (ay) {$ay$}; & \\
& \node [vertex] (za) {$za$}; & \node [vertex] (az) {$az$}; & \\
\node [vertex] (u) {$ax$}; & & & \node [vertex] (v) {$xa$}; \\
};
\graph [use existing nodes]{
u  -> v -> {
	az [> out = 120, > in = 0] -> za [< out = 180, < in = 60],	
	ay [> out =90, > in = 0] -> ya [< out = 180, < in = 90]
} -> u;
v [< out = 270, < in = 270] -> u;
};
\end{tikzpicture}
\subcaption{$\debruijn{2}$ with $u_{1}= ax$, $v_{1} = xa$ and $r = 0$\label{fig:deBruGrigo2}}
\end{minipage}
\hfill
%
\begin{minipage}[b]{0.35\textwidth}
\centering
\begin{tikzpicture}
[ every path/.style = {shorten <=1pt, shorten >=1pt, >=stealth, absolute, draw},
  vertex/.style = {circle, minimum size=14pt, inner sep=1pt, draw}]
\matrix[row sep = 4ex, column sep = 2em]{
\node [vertex] (yax) {$yax$}; & \node [vertex](aya) {$aya$}; & \node [vertex] (xay) {$xay$}; \\
\node [vertex] (zax) {$zax$}; & \node [vertex] (aza) {$aza$}; & \node [vertex] (xaz) {$xaz$}; \\
& \node [vertex] (axa) {$axa$}; & \\
&  \node [vertex] (xax) {$xax$}; & \\
};
\graph [use existing nodes]{
axa -> {
	xax [> bend left=30, < bend left = 30],	
	xaz [> bend right = 30] -> aza -> zax [< bend right = 30],
	xay [> bend right = 90] -> aya -> yax [< bend right = 90]
} -> axa;
};
\end{tikzpicture}
\subcaption{$\debruijn{3}$ with $u_{1} = v_{1} = axa$ and $r =1$\label{fig:deBruGrigo3}}
\end{minipage}
\caption{The first de Bruijn graphs for the Grigorchuk subshift}
\label{fig:deBruGrigo123}
\normalsize
\end{figure}

\begin{figure}
\centering
\footnotesize
\begin{tikzpicture}
[ every path/.style = {shorten <=1pt, shorten >=1pt, >=stealth, absolute, draw},
  vertex/.style = {circle, minimum size=14pt, inner sep=1pt, draw}]
\matrix[row sep = 3ex, column sep = 1.5em]{
& \node [vertex] (zu) {}; & & \node (dotstopz) {$\cdots$}; & & \node [vertex] (vz) {}; & \\
\node [vertex] (u) {$u_{1}$}; & \node [vertex] (ux) {}; & & \node (dotsmiddle) {$\cdots$}; & & \node [vertex] (xv) {}; & \node [vertex] (v) {$v_{1}$}; \\
& \node (dotsbottoms1) {$\cdots$}; & \node [vertex] (vtilde) {$v_{2}$}; & \node (dotsbottoms2) {$\cdots$}; & \node [vertex] (utilde) {$u_{2}$}; & \node (dotsbottoms3) {$\cdots$}; & \\
& & & \node (dotsbottomr){$\cdots$}; & & &\\
& \node [vertex] (tu) {}; & & \node (dotsbottomt) {$\cdots$}; & & \node [vertex] (vt) {}; & \\
};
\graph [use existing nodes]{
u -> ux ->[edge node= {node [above, xshift = 3em] {$2^{k+1} -  L - 1 $ edges} }]  dotsmiddle -> xv -> v -> {
	vt [> out = 270, > in = 0] -> dotsbottomt [target edge node= {node [xshift = 3.5em, yshift=-3.5ex] {$ v_{1}|_{[2, L]} a_{k} $} }, source edge node= {node [xshift = 3em, yshift=-2.3ex] {$  L - 2^{k} + 1 $ edges}}] -> tu [< out = 180, < in = 270],	
	dotsbottoms3 [> out = 240, > in = 0, target edge node = {node [xshift = 3em, yshift=-4ex, fill = white, inner sep = 0.5ex] {$ L - 2^{k}+1$ edges between $v_{1}$ and $u_{2}$}}] -> utilde -> dotsbottoms2 -> vtilde [target edge node = {node [above, xshift = 1.7em, yshift = 1ex] {$ 2^{k-1} - 1 - \widetilde{r} $ edges between $u_{2}$ and $v_{2}$} }] -> dotsbottoms1 [< out = 180, < in = 300, source edge node = { node [xshift = -3em, yshift = -4ex, fill = white, inner sep = 0.5ex] {$ L - 2^{k} + 1$ edges between $v_{2}$ and $u_{1}$} }],
	vz [> out =90, > in = 0] -> dotstopz [target edge node= {node [xshift = 3.5em, yshift=3.5ex] {$ v_{1}|_{[2, L]} \, a_{k+1} $} }] -> [edge node= {node [xshift = 3.5em, yshift=2ex] {$ L + 1 $ edges} }] zu [< out = 180, < in = 90]
} -> u;
vtilde ->[out=270, in= 180, edge node= {node [at end, below, xshift = 1em]{$ \widetilde{r}+1 $ edges between $v_{2}$ and $u_{2}$} }] dotsbottomr ->[out = 0, in = 270] utilde;
};
\end{tikzpicture}
\normalsize
\caption{The de Bruijn graph for the Grigorchuk subshift for $ k \geq 2 $ for and $ 2^{k} \leq L \leq 2^{k+1} - 2^{k-1} - 1$. We have $r = L - 2^{k}$. Unless stated otherwise, the number of edges refers to the distance between $u_{1}$ and $v_{1}$.}
\label{fig:deBruGrigoAllg1}
\end{figure}

\begin{figure}
\centering
\footnotesize
\begin{tikzpicture}
[ every path/.style = {shorten <=1pt, shorten >=1pt, >=stealth, absolute, draw},
  vertex/.style = {circle, minimum size=14pt, inner sep=1pt, draw}]
\matrix[row sep = 3ex, column sep = 3em]{
& \node [vertex] (zu) {}; & \node (dotstopz) {$\cdots$}; & \node [vertex] (vz) {}; & \\
& \node [vertex] (xu) {}; & \node (dotstopx) {$\cdots$}; & \node [vertex] (vx) {}; & \\
\node [vertex] (u) {$u_{1}$}; & \node [vertex] (ux) {}; & \node (dotsmiddle) {$\cdots$}; & \node [vertex] (xv) {};	& \node [vertex] (v) {$v_{1}$}; \\
& \node [vertex] (tu) {}; &  \node (dotsbottom) {$\cdots$}; & \node [vertex] (vt) {}; & \\
};
\graph [use existing nodes]{
u -> ux ->[edge node= {node [above, xshift = 2.5em] {$ 2^{k+1} - L - 1 $ edges} }]  dotsmiddle -> xv -> v -> {
	vt [> out = 240, > in = 0] -> dotsbottom [target edge node= {node [below, xshift = 2em, yshift=-1.5ex] {$ v_{1}|_{[2, L]} a_{k} $} }, source edge node= {node [below, xshift = 2em] {$  L -2^{k} +1 $ edges} }] -> tu [< out = 180, < in = 300],
	vx [> out = 120, > in = 0] -> dotstopx [target edge node= {node [below, xshift = 1.5em, yshift=-0.5ex] {$ v_{1}|_{[2, L]} \, a_{k+1} $} }] -> xu [< out = 180, < in = 60],	
	vz [> out =90, > in = 0] -> dotstopz [target edge node= {node [above, xshift = 2em, yshift=1.5ex] {$ v_{1}|_{[2, L]} \, a_{k-1} $} }]  -> [edge node= {node [below, xshift = 2.5em, yshift=-2ex] {$ L + 1 $ edges on each arc} }] zu [< out = 180, < in = 90]
} -> u;
};
\end{tikzpicture}
\normalsize
\caption{The de Bruijn graph for the Grigorchuk subshift for $ k \geq 2 $ for and $ 2^{k+1} - 2^{k-1} \leq L \leq 2^{k+1} - 1 $. We have $ r = L -2^{k}$. The number of edges of an arc refers to the distance between $u_{1}$ and $v_{1}$.}
\label{fig:deBruGrigoAllg2}
\end{figure}

\FloatBlock 

\subsection{Application: Palindrome Complexity}

A word is called a palindrome if it remains the same when read backwards, that is, if
\[ u = u(1) \hdots u(L) = u(L) \hdots u(1) =: \rev{u} \]
holds, where $\rev{u}$ denotes the reflection of the word $u$ at its midpoint. Note that $\rev{\bullet}$ defines an involution on the set of subwords of length $L$ of the subshift.

\begin{exmpl}
For every $ k \in \mathbb{N}_{0} $, the word $\block{k}$ is a palindrome: For $\block{0} = a_{0}^{n_{0}-1}$ this is obviously true and for $ k \geq 1 $ it follows by induction from the decomposition
\[ \rev{ \block{k+1} } = \rev{ \block{k} } \; a_{k+1} \; \rev{ \block{k} } \; \hdots \; \rev{ \block{k}} =  \block{k} \; a_{k+1} \; \block{k} \; \hdots \; \block{k} = \block{k+1} \, . \]
\end{exmpl}

Similar to the subword complexity, we define the palindrome complexity as
\[ \pali : \mathbb{N} \rightarrow \mathbb{N} \quad L \mapsto \card{ \{ u \in \langu{\subshift} : \length{u} = L \, \text{ and } \rev{u} = u \} } \, . \]
To investigate the palindrome complexity of simple Toeplitz subshifts, we will show that reflection symmetry of words corresponds to reflection symmetry in the de Bruijn graphs. Thus, Palindromes are precisely the vertices that lie on the reflection axis. Recall that $u_{1}$ resp. $v_{1}$ in $ \vertices{L} $ denote the prefix resp. suffix of length $L$ of $\block{k}$. Note that $u_{1} = v_{1}$ holds for $1 \leq L \leq \length{\block{0}}$.

\begin{prop}
Reflection at the midpoint of a word corresponds in the Figures~\ref{fig:deBruijn1ToL0}, \ref{fig:deBruijnAllg1} and \ref{fig:deBruijnAllg2} to reflection of the graph at a vertical axis through the middle of the graph.
\end{prop}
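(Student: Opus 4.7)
The plan is to realise the map $u \mapsto \rev{u}$ as an arrow-reversing involution on each de Bruijn graph $\debruijn{L}$, and then to identify this involution with the left-right mirror drawn in the three figures.

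First I would show that $\langu{\subshift}$ is stable under $\rev{\cdot}$. Because $\block{k}$ is a palindrome for every $k$ (the example preceding the statement) and $\langu{\subshift}$ is exactly the set of subwords of the nested prefixes $\block{k}$ (by the equality $\subshift = \widehat{\subshift}_{\omega}$), every $u \in \langu{\subshift}$ lies inside some $\block{k}$ and hence $\rev{u}$ lies inside the same $\block{k}$. Thus $\rev{\cdot}$ is a well-defined involution on $\vertices{L}$.

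Second, I would verify the edge rule: an edge $(u,v) \in \edges{L}$ has edge word $w := u(1)\cdots u(L)\, v(L)$ of length $L+1$, and $\rev{w} = v(L) u(L) \cdots u(1) = \rev{v} \cdot u(1)$ is precisely the edge word of the edge $(\rev{v}, \rev{u}) \in \edges{L}$. Hence the involution reverses every arrow and sends a directed arc of length $\ell$ from $x$ to $y$ to a directed arc of length $\ell$ from $\rev{y}$ to $\rev{x}$. Palindromicity of $\block{k}$ gives $\rev{u_1} = v_1$, and in Figure~\ref{fig:deBruijnAllg2} palindromicity of $\block{k-1}\, a_{k-1}\, \block{k-1}$ gives $\rev{u_2} = v_2$, so the landmark vertices are exchanged exactly as the vertical mirror demands.

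The main obstacle will be the arc-by-arc matching. Each arc in the three figures is traced out by sliding a length-$L$ window along some finite word $W$, which turns out to be either a symmetric subword of $\block{k}\, a\, \block{k}$ (top and bottom arcs of Figures~\ref{fig:deBruijn1ToL0} and \ref{fig:deBruijnAllg1}, with $a \in \alphab_{k+1}$) or a symmetric subword of $\block{k-1}\, a_{k-1}\, \block{k-1}$ (the inner arcs between $v_2$ and $u_2$ in Figure~\ref{fig:deBruijnAllg2}). Using palindromicity of $\block{k}$, $\block{k-1}$ and $\block{k-2}$ together with the periodic structure of these words one verifies that each such $W$ is itself a palindrome, so by the involution rule the arc is mapped to itself with reversed orientation. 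The two remaining arcs of Figure~\ref{fig:deBruijnAllg2}, the one from $v_1$ to $u_2$ and the one from $v_2$ to $u_1$, correspond to non-palindromic window-words that are each other's reflections, so these two arcs are exchanged. Once this combinatorial bookkeeping is completed for each figure, the involution acts on the graph exactly as the depicted vertical reflection.
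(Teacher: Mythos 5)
Your proposal is correct and follows essentially the same route as the paper: both arguments rest on the palindromicity of the $\block{k}$-blocks and verify arc by arc that word reversal sends the $j$-th vertex from one endpoint to the $j$-th vertex from the other, with the pair of segments between $v_{1}$, $u_{2}$ and $v_{2}$, $u_{1}$ being exchanged and all other arcs mapped to themselves. Your packaging of this as an arrow-reversing involution on $\debruijn{L}$ with palindromic window words is a slightly cleaner formulation of the same computation the paper carries out explicitly on the restrictions $\block{k}|_{[\cdot,\cdot]}$.
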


\begin{proof}
First we consider the arcs from $v_{1}$ to $u_{1}$, except for the arc from $v_{2}$ to $u_{2}$ in Figure~\ref{fig:deBruijnAllg2}, which will be considered later. The vertices on every arc from $v_{1}$ to $u_{1}$ are the subwords that occur in a word of the type $ \block{k} a \block{k} $, $ a \in \alphab_{k} $, between the suffix of the first $\block{k}$ and the prefix of the second $\block{k}$. The word $ \block{k} |_{[\length{ \block{ k }} - L +1+j, \length{ \block{ k }}]} \, a \, \block{k} |_{[1, j-1]} $ is the $j$-th vertex on such an arc, counted from $v_{1}$. Here, an empty interval as index means that no letter of this $\block{k}$ occurs. The reversed word is
\[ \rev{ \block{k} |_{[1, j-1]} }  a \rev{ \block{k} |_{[\length{ \block{ k }} - L +1+j, \length{ \block{ k }}]} } = \block{k} |_{[ \length{ \block{ k }} - j + 2, \length{ \block{ k }} ]} a \block{k} |_{[1, L-j]} \, . \]
This is the $j$-th vertex on the same arc when counted from $u_{1}$. Thus reversing the words corresponds to reflection of the path in the middle.

Now we deal with the path from $u_{1}$ to $v_{1}$. Let $ u := \block{k} |_{ [j, j+L-1] } $ denote a subword of length $L$ that is contained in $\block{k}$ and starts at the $j$-th letter from the left. Then the reflected word is $ \rev{u} = \block{k} |_{[ \length{\block{k}} - j - L + 2 , \length{\block{k}} - j + 1 ]}$, that is, $\rev{u}$ is the subword of $\block{k}$ that ends at the $j$-th letter from the right. The $j$-th vertex on the path from $u_{1}$ to $v_{1}$ is precisely the subword that starts at the $j$-th letter from the left. Hence, reversing the words corresponds to reflecting the path in the middle. 

Finally, we consider the arc from $v_{2}$ to $u_{2}$. Because of $ L \leq 2 \length{\block{k-1}} - \length{\block{k-2}}$ there is a copy of $u_{2}$ that begins at the start of the second $\block{k-2}$-block in $ \block{k-1} a_{k-1} \block{k-1}$, see Figure~\ref{fig:u2v2Location}. Moreover, we can find a copy of $v_{2}$ that begins in the first $\block{k-2}$-block in $ \block{k-1} a_{k-1} \block{k-1}$. The path from $v_{2}$ to $u_{2}$ corresponds to the words between them.

\begin{figure}
\centering
\footnotesize

\begin{tikzpicture}
[every node/.style ={rectangle, inner sep=0pt, minimum height=0.6cm},
pn/.style={minimum width=7.04cm, draw},
pn-1/.style={minimum width=3.53cm, draw},
pn-2/.style={minimum width=1.38cm, draw},
buchst/.style={minimum width=0.28cm, draw}]
%
\node [right] at (0,0) [pn] (A) {$ \block{ k-1  } $};
\node [buchst] (A) [right=0.1cm of A] {$b$};
\node [pn] (A) [right=0.1cm of A] {$ \block{ k-1 } $};
\node [right] at (0,-1) [pn-2] (Fix1) {$ \block{ k-2 }  $};
\node [buchst] (A) [right=0.1cm of Fix1] {$b$};
\node[pn-2] (A) [right=0.1cm of A] {$ \block{ k-2 }  $};
\node [buchst] (A) [right=0.1cm of A] {$b$};
\node[pn-2] (A) [right=0.1cm of A] {$ \block{ k-2 } $};
\node [buchst] (A) [right=0.1cm of A] {$b$};
\node[pn-2] (A) [right=0.1cm of A] {$ \block{k-2 }  $};
\node [buchst] (A) [right=0.1cm of A] {$$b};
\node[pn-2] (A) [right=0.1cm of A] {$ \block{ k-2 }  $};
\node [buchst] (Fix2) [right=0.1cm of A] {$b$};
\node[pn-2] (A) [right=0.1cm of Fix2] {$ \block{ k-2 } $};
\node [buchst] (A) [right=0.1cm of A] {$b$};
\node[pn-2] (A) [right=0.1cm of A] {$ \block{ k-2 }  $};
\node [buchst] (A) [right=0.1cm of A] {$b$};
\node[pn-2] (A) [right=0.1cm of A] {$ \block{ k-2 }  $};
\node [below left=0.2 of Fix2, minimum width=8.65cm, minimum height=0.25cm, draw]{$v_{2}$};
\node (U) [below right=0.7 of Fix1, minimum width=8.65cm, minimum height=0.25cm, draw]{$u_{2}$};
\node [right=2 of U]{with $b = a_{k-1}$};
\end{tikzpicture}

\normalsize
\caption{Location of $v_{2}$ and $u_{2}$ in $\block{ k-1 } a_{k-1} \block{ k-1 }$.}
\label{fig:u2v2Location}
\end{figure}

Recall the notation $ \widetilde{r} := L \mod ( \length{\block{ k-2 }} + 1 ) $. Moreover, let $x_{j}$ denote $j$-th vertex after $v_{2}$ on the path to $u_{2}$, where $j=0$ is $v_{2}$ and $j=\widetilde{r}+1$ is $u_{2} $. The already established symmetry of the arcs between $v_{1}$ and $u_{1}$ yields the equality $ \rev{x_{0}} = \rev{v_{2}} = u_{2} = x_{\widetilde{r}+1}$. For $j = 1 , \hdots , \widetilde{r}$, the $j$-th vertex after $v_{2}$ on the path to $u_{2}$ is the word
\begin{align*}
x_{j} &= \block{k-1}|_{[ \length{ \block{k-2} } + 1 - \widetilde{r} + j \,,\, \length{ \block{k-1} } ]} \; a_{k-1} \;  \block{k-1}|_{[ 1 \,,\, L - \length{\block{ k-1 }} + \length{\block{ k-2 }} - \widetilde{r} + j - 1 ]} \\
&= \Big[ \block{k-2}|_{[ \length{ \block{k-2} } + 1 - \widetilde{r} + j \,,\, \length{ \block{k-2} } ]} \; a_{k-1} \; \block{k-2} \hdots \block{k-2} \Big] \; a_{k-1} \\
& \quad \, \, \Big[ \block{k-2} \hdots \block{k-2} \; a_{k-1} \; \block{k-2}|_{[ 1 \,,\, j-1 ]} \Big] \, .
\end{align*}
Now we obtain the reflection symmetry of the arc from $v_{2}$ to $u_{2}$ from the fact that the $\block{k-2}$-blocks are palindromes: 
\begin{align*}
\rev{x_{j}} &= \rev{ \block{k-2}|_{[ 1 \,,\, j-1 ]} } \; a_{k-1} \; \block{k-2} \hdots \block{k-2} \; a_{k-1} \; \rev{ \block{k-2}|_{[ \length{ \block{k-2} } + 1 - \widetilde{r} + j \,,\, \length{ \block{k-2} } ]} } \\
&= \block{k-2}|_{[\length{ \block{k-2} } - j + 2 \,,\, \length{ \block{k-2} }]} \; a_{k-1} \; \block{k-2} \hdots \block{k-2} \; a_{k-1} \; \block{k-2}|_{[1 \,,\, \widetilde{r} - j ]} \\
&= x_{\widetilde{r} +1 - j} \qedhere
\end{align*}
\end{proof}

The symmetry of the graphs with respect to $\rev{\bullet}$ implies that the number of palindromes of length $L$ is the number of arcs in $\debruijn{L}$ with an even number of edges. The formula for the palindrome complexity follows now from the description of the graphs in the previous subsection (see also Figure~\ref{fig:deBruijn1ToL0}, \ref{fig:deBruijnAllg1} and \ref{fig:deBruijnAllg2}). As before, we denote $r := L \mod (\length{ \block{ k-1 }} + 1)$ and $ \widetilde{r} := L \mod ( \length{\block{ k-2 }} + 1 ) $.

\begin{coro}
For $ 1 \leq L \leq \length{\block{0}}$, the palindrome complexity is given by
\[ \pali(L) = ( \card{\alphab_{0}} - 1) \cdot (L \mod 2) + 1 \, .\]
For $ k \geq 1 $ and $\length{ \block{ k-1 }} + 1 \leq L \leq \length{ \block{ k }} $, the palindrome complexity is given by
\begin{align*}
\pali(L) = \, &(\card{ \alphab_{k} } - 1 ) \cdot ( L \mod 2 ) + ( \length{ \block{ k-1 } } + 1 - r \mod 2 ) \\
&+ ( r \mod 2 ) \cdot \begin{cases}
1 & \text{if } \length{ \block{ k-1 }} + 1 \leq L \leq \length{ \block{ k }} - \length{ \block{ k-1 }} - 1 \\
\mathbbm{1}_{\alphab_{k+1}}(a_{k}) & \text{if } \length{ \block{ k }} - \length{ \block{ k-1 }} \leq L \leq \length{ \block{ k }}
\end{cases} \\
&+ \begin{cases}
0 \\
( \widetilde{r} \mod 2) + ( \length{\block{k-2}} + 1 - \widetilde{r} \mod 2) - ( L \mod 2 ) \\
\end{cases} \\
& \quad \, \begin{cases}
\text{if } a_{k-1} \notin \alphab_{k} \text{ or } L > 2 \length{\block{k-1}} - \length{\block{k-2}} \\
\text{if } a_{k-1} \in \alphab_{k} \text{ and }  L \leq 2 \length{\block{k-1}} - \length{\block{k-2}}
\end{cases}
\end{align*}
\end{coro}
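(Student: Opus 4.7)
The plan is to exploit the reflection symmetry of the de Bruijn graphs established in the preceding proposition. Since a vertex of $\debruijn{L}$ is a palindrome precisely when it is fixed by the involution $\rev{\bullet}$, and since this involution acts on the graph as a geometric reflection about a central vertical axis, $\pali(L)$ equals the number of vertices fixed by the reflection. Using the explicit descriptions in Figures~\ref{fig:deBruijn1ToL0}, \ref{fig:deBruijnAllg1} and \ref{fig:deBruijnAllg2}, the task reduces to walking along each arc (viewed as a maximal path between branching points) and asking whether it is mapped to itself and, if so, whether it has a genuine midpoint vertex, that is, whether its number of edges is even.

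For $1 \leq L \leq \length{\block{0}}$ Figure~\ref{fig:deBruijn1ToL0} applies. The central vertex $v_{1}=a_{0}^{L}$ is always a palindrome; the self-loop at $v_{1}$ (when present) contributes nothing; and each of the $\card{\alphab_{0}}-1$ arcs through a word of the form $a_{0}^{L-1}b$ has $L+1$ edges and, because $\block{0}b\block{0}$ is a palindrome, is mapped to itself by $\rev{\bullet}$, contributing one fixed vertex iff $L$ is odd. Summing these produces the first line of the claim.

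For $k\geq 1$ and $\length{\block{k-1}}+1\leq L\leq\length{\block{k}}$ I first treat the sub-case in which Figure~\ref{fig:deBruijnAllg1} applies, extracting three separate contributions. The middle path $u_{1}\to v_{1}$, of $\length{\block{k-1}}-r$ edges, is reflection-invariant (it is the unique $u_{1}\to v_{1}$ arc in the graph) and contributes one fixed vertex iff its number of vertices $\length{\block{k-1}}+1-r$ is odd; this uniformly handles the degeneracy $u_{1}=v_{1}$ that occurs at $r=\length{\block{k-1}}$. The $\card{\alphab_{k}}-1$ return arcs from $v_{1}$ to $u_{1}$ that come from an extension by some $a\in\alphab_{k}\setminus\{a_{k}\}$ are each reflection-invariant (since $\block{k}a\block{k}$ is a palindrome for any letter $a$) and contribute $(L \bmod 2)$ each. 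The $a_{k}$-arc, when it exists, has $r+1$ edges and contributes $(r \bmod 2)$; its existence condition matches precisely the case distinction in the formula.

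In the remaining sub-case, where Figure~\ref{fig:deBruijnAllg2} applies, the $a_{k-1}$-arc is replaced by the split structure consisting of three segments $v_{1}\to u_{2}$, $u_{2}\to v_{2}$, $v_{2}\to u_{1}$ together with the back arc $v_{2}\to u_{2}$. The two outer segments, both of $r+1$ edges, are exchanged by the reflection and together contribute no fixed vertex. The middle segment, of $\length{\block{k-2}}-\widetilde{r}$ edges, is the unique $u_{2}\to v_{2}$ arc in the graph and is therefore reflection-invariant, contributing $(\length{\block{k-2}}+1-\widetilde{r}) \bmod 2$ fixed vertices; this also absorbs the degeneracy $u_{2}=v_{2}$ at $\widetilde{r}=\length{\block{k-2}}$. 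The back arc, of $\widetilde{r}+1$ edges, is likewise the unique $v_{2}\to u_{2}$ arc and contributes $\widetilde{r}\bmod 2$. Replacing the $(L\bmod 2)$ contribution that the would-be full $a_{k-1}$-arc provides in the first sub-case by these two terms produces exactly the extra summand appearing in the corollary. The main technical hurdle will be to verify rigorously that every arc in Figures~\ref{fig:deBruijnAllg1} and \ref{fig:deBruijnAllg2} is really sent to itself by $\rev{\bullet}$ rather than being exchanged with a different arc of the same edge count, and to make sure that the shared endpoints $u_{1},v_{1},u_{2},v_{2}$ are counted exactly once; here the palindromicity of the blocks $\block{k}$ and $\block{k-2}$, together with the complete description of right-special words from Propositions~\ref{prop:geq2} and \ref{prop:geq3}, does the work.
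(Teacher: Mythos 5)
Your proposal is correct and follows essentially the same route as the paper: the paper likewise deduces the formula from the reflection symmetry of the de Bruijn graphs, observing that palindromes are exactly the midpoint vertices of reflection-invariant arcs, i.e.\ that $\pali(L)$ counts the arcs (including the degenerate $u_{1}\to v_{1}$ path and, in Figure~\ref{fig:deBruijnAllg2}, the four segments around $u_{2},v_{2}$) with an even number of edges. Your arc-by-arc bookkeeping, including the replacement of the $(L\bmod 2)$ contribution of the $a_{k-1}$-arc by the $(\widetilde{r}\bmod 2)+((\length{\block{k-2}}+1-\widetilde{r})\bmod 2)$ terms, is exactly how the stated formula is assembled, and the arc-invariance you flag as the remaining technical point is precisely what the preceding proposition supplies.
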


\begin{exmpl}
For a generalized Grigorchuk subshift and every $ k\geq 0 $, the length $ \length{ \block{k} } + 1 $ is a power of two and thus even. Therefore $ (r \mod 2) = (L \mod 2) = (\widetilde{r} \mod 2) $ holds and the palindrome complexity simplifies for a generalized Grigorchuk subshift to
\[ \pali(L) = ( \card{\alphab_{0}} - 1) \cdot (L \mod 2) + 1 \qquad \text{for } 1 \leq L \leq \length{\block{0}} \]
and 
\begin{align*}
\pali(L) =  ( L \mod 2 ) \cdot \Bigg( & \card{ \alphab_{k} } + \begin{cases}
1 & \text{if } \length{ \block{ k-1 }} + 1 \leq L \leq \length{ \block{ k }} - \length{ \block{ k-1 }} - 1 \\
\mathbbm{1}_{\alphab_{k+1}}(a_{k}) & \text{if } \length{ \block{ k }} - \length{ \block{ k-1 }} \leq L \leq \length{ \block{ k }}
\end{cases} \\
& + \begin{cases}
0 & \text{if } a_{k-1} \notin \alphab_{k} \text{ or } L > 2 \length{\block{k-1}} - \length{\block{k-2}} \\
1 & \text{if } a_{k-1} \in \alphab_{k} \text{ and }  L \leq 2 \length{\block{k-1}} - \length{\block{k-2}}
\end{cases} \Bigg)
\end{align*}
for $ k \geq 1 $ and $\length{ \block{ k-1 }} + 1 \leq L \leq \length{ \block{ k }} $.
\end{exmpl}

\begin{exmpl}
For the (standard) Grigorchuk subshift, $ \card{\alphab_{0}} = 4 $ and $a_{0} \notin \alphab_{1}$ hold. Moreover, we have $\length{\block{k}} + 1 = 2^{k+1}$ for all $ k \geq 0 $, as well as $ \card{\alphab_{k}} = 3 $ and $a_{k} \in \alphab_{k+1}$ for all $ k \geq 1$. This yields
\[ \pali(L) = \begin{cases}
4 \cdot ( L \mod 2 ) & \text{if } 1 \leq L \leq 3 \\
5 \cdot ( L \mod 2 ) & \text{if } 2^{k} \leq L \leq  2^{k+1} - 2^{k-1} - 1 \text{ for } k \geq 2\\
4 \cdot ( L \mod 2 ) & \text{if } 2^{k+1} - 2^{k-1} \leq L \leq 2^{k+1}  -1 \text{ for } k \geq 2
\end{cases} \, .\]
\end{exmpl}


\section{Repetitivity}
\label{sec:Repe}

In this section we continue our study of combinatorial properties of simple Toeplitz subshifts by investigating the repetitivity function. In the first part of the section, we introduce notation and two tools that describe combinatorial properties of the coding sequence $(a_{k})$. In the second part, we give an explicit formula for the repetitivity function of a simple Toeplitz subshift for all $ L \geq  \length{ \block{ m_{1} }} - \length{ \block{ m_{1}-1 }} + 1 $. In the third part, we characterize $\alpha$-repetitivity of the subshift as well as the special case of linear repetitivity (i.e. $\alpha = 1$).

\subsection{General Notions}

Recall that $\langu{\subshift}$ denotes the set of finite words that appear in elements of the subshift $\subshift$. The repetitivity function is defined as $\repe : \, \mathbb{N} \to \mathbb{N} ,\, L \mapsto \min \{ \widetilde{L} : $ every word of length $L$ in $\langu{\subshift}$ is contained in every word of length $\widetilde{L}$ in $ \langu{\subshift} \} $. It is easy to see that the repetitivity function is strictly monotonically increasing. In order to compute the repetitivity of $\subshift $, we need certain information about the structure of the coding sequence $(a_{k})$. The first one is expressed by the values of the function
\[ \alleB : \mathbb{N}_{0} \to \mathbb{N} \, , k \mapsto \min \{ j > k : \{ a_{k+1}, \hdots , a_{j} \} = \alphab_{k+1} \} \, . \]
This function describes at which point all letters that occur after $a_{k}$ have occurred at least once. Note that $ \alleB(k) \geq k + \# \alphab_{k+1} \geq k + \card{ \alphabEv }$ holds for all $ k \in \mathbb{N}_{0} $. 

\begin{prop}
\label{prop:AlleBMon}
The function $\alleB$ is monotonically increasing.
\end{prop}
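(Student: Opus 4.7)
The plan is to establish directly that $\alleB(k) \leq \alleB(k+1)$ holds for every $k \in \mathbb{N}_{0}$. The main idea is to compare the eventual alphabets $\alphab_{k+1}$ and $\alphab_{k+2}$ via the decomposition $\alphab_{k+1} = \{ a_{k+1} \} \cup \alphab_{k+2}$, and to split the argument according to whether $a_{k+1}$ is contained in $\alphab_{k+2}$ or not. Everything else is a short bookkeeping argument about when the minimum in the definition of $\alleB$ is attained.

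In the first case I would assume $a_{k+1} \in \alphab_{k+2}$, so that $\alphab_{k+1} = \alphab_{k+2}$. Setting $j := \alleB(k+1)$, the defining equality $\{ a_{k+2}, \hdots , a_{j} \} = \alphab_{k+2}$ extends to $\{ a_{k+1}, \hdots , a_{j} \} = \alphab_{k+1}$, because prepending the letter $a_{k+1}$, which already lies in the set by assumption, does not change it. Since $j > k+1 > k$, the minimality in the definition of $\alleB(k)$ immediately gives $\alleB(k) \leq j = \alleB(k+1)$.

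In the second case I would assume $a_{k+1} \notin \alphab_{k+2}$, so that $\alphab_{k+2} = \alphab_{k+1} \setminus \{ a_{k+1} \}$ and in particular the letter $a_{k+1}$ does not reappear after position $k+1$. For every $j > k+1$ one then has $\{ a_{k+2}, \hdots , a_{j} \} \subseteq \alphab_{k+2}$, so that the two conditions
\[ \{ a_{k+1}, \hdots , a_{j} \} = \alphab_{k+1} \qquad \text{and} \qquad \{ a_{k+2}, \hdots , a_{j} \} = \alphab_{k+2} \]
are equivalent. The remaining candidate $j = k+1$ cannot realise $\alleB(k)$, since $\{a_{k+1}\} = \alphab_{k+1}$ would force $\alphab_{k+2} = \emptyset$ and hence $\card{\alphabEv} \leq 1$, contradicting the standing assumption $\card{\alphabEv} \geq 2$. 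Minimising over the remaining $j$ on both sides then yields even the equality $\alleB(k) = \alleB(k+1)$.

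There is essentially no obstacle in the argument; the only subtlety is the exclusion of the degenerate candidate $j = k+1$ in the second case, which is where the standing assumption $\card{\alphabEv} \geq 2$ enters.
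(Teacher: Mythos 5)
Your proof is correct and follows essentially the same route as the paper: the same case split on whether $a_{k+1} \in \alphab_{k+2}$ (i.e.\ $\alphab_{k+2} = \alphab_{k+1}$ or $\alphab_{k+2} = \alphab_{k+1} \setminus \{ a_{k+1} \}$) and the same comparison of the defining conditions for $\alleB(k)$ and $\alleB(k+1)$. Your explicit exclusion of the candidate $j = k+1$ via $\card{\alphabEv} \geq 2$ is a small point the paper passes over silently, and is a welcome addition.
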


\begin{proof}
We show that $ \alleB(k+1) \geq \alleB(k) $ holds for every $ k \in \mathbb{N}_{0} $. In the case of $ \alphab_{k+2} = \alphab_{k+1} $, this follows from the definition of $\alleB$, since
\[ \alleB(k+1) = \min\{ j : \{ a_{k+2}, \hdots , a_{j} \} = \alphab_{k+1} \} \geq \min\{ j : \{ a_{k+1}, \hdots , a_{j} \} = \alphab_{k+1} \} = \alleB(k) \]
holds. In the case of $\alphab_{k+2} \neq \alphab_{k+1}$, we have $ \alphab_{k+2} = \alphab_{k+1} \setminus \{ a_{k+1} \} $. Now the claim follows from the relations
\begin{align*}
\alleB(k+1) &= \min\{ j : \{ a_{k+2}, \hdots , a_{j} \} = \alphab_{k+1} \setminus \{ a_{k+1} \} \} \\
&= \min\{ j : \{ a_{k+1}, \hdots , a_{j} \} = \alphab_{k+1} \} \\
&= \alleB(k) \, . \qedhere
\end{align*}
\end{proof}

As can be seen in the above proof, $\alleB$ is in general monotonically, but not strictly monotonically, increasing. The second information we need about $(a_{k})$ to compute the repetitivity is at which positions the value of $\alleB$ actually changes. Therefore, we define the sequence $(m_{i})_{i \in \mathbb{N}_{0}}$ by $m_{0} = 0 $ and  $ \alleB(m_{i}) = \alleB( m_{i} + 1 ) = \hdots = \alleB( m_{i+1}-1 ) < \alleB( m_{i+1} ) $. It denotes those positions where $\alleB$ is growing. The following two propositions give alternative descriptions of $(m_{i})$ that will be helpful when computing the repetitivity.

\begin{prop}
\label{prop:MUndK}
The sequence $(m_{i})$ can be characterized by $m_{0} = 0$ and the recursion relation $m_{i+1} :=  \max \{ j \leq \alleB(m_{i}) : \{ a_{j} , a_{j+1} , \hdots , a_{\alleB(m_{i})} \} = \alphab_{m_{i}+1} \}$.
\end{prop}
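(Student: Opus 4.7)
The plan is to write $M := \alleB(m_i)$ and $J := \max\{ j \leq M : \{ a_j, a_{j+1}, \ldots, a_M \} = \alphab_{m_i+1} \}$, and to establish $J = m_{i+1}$ via the two containment inequalities $J \geq m_{i+1}$ and $J \leq m_{i+1}$.

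For the inequality $J \geq m_{i+1}$, I will verify that $j = m_{i+1}$ is itself a member of the set whose maximum defines $J$. The defining property of the sequence $(m_i)$ ensures that $\alleB$ is constant and equal to $M$ on the range $\{ m_i, m_i+1, \ldots, m_{i+1}-1 \}$, so applying the definition of $\alleB$ at the right endpoint $k = m_{i+1}-1$ yields $\{ a_{m_{i+1}}, \ldots, a_M \} = \alphab_{m_{i+1}}$. It then suffices to identify $\alphab_{m_{i+1}}$ with $\alphab_{m_i+1}$.

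For the converse inequality $J \leq m_{i+1}$, I will rule out every $j$ with $m_{i+1} < j \leq M$. The definition of $m_{i+1}$ yields $\alleB(m_{i+1}) > M$, which after unpacking gives $\{ a_{m_{i+1}+1}, \ldots, a_M \} \subsetneq \alphab_{m_{i+1}+1}$. Shrinking the window further from the left can only remove letters, so every window $\{ a_j, \ldots, a_M \}$ with $j > m_{i+1}$ is likewise a proper subset of $\alphab_{m_{i+1}+1}$. Identifying $\alphab_{m_{i+1}+1}$ with $\alphab_{m_i+1}$ then completes this direction.

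The hard part will be the identification $\alphab_{m_{i+1}} = \alphab_{m_{i+1}+1} = \alphab_{m_i+1}$ used in both halves. This amounts to showing that no letter of $\alphab_{m_i+1}$ has its last occurrence at an index in the range $\{m_i+1, \ldots, m_{i+1}\}$. I would approach this by contradiction, exploiting the case split from the proof of Proposition~\ref{prop:AlleBMon}: if a letter dropped out at some intermediate position $\ell$, then combined with the constancy $\alleB(\ell-1) = \alleB(\ell) = \cdots = \alleB(m_{i+1}-1) = M$ and the minimality clauses in the definition of $\alleB$, one can chase the effect of the drop forward to position $m_{i+1}$ and derive $\alleB(m_{i+1}) = M$, contradicting the strict jump at $m_{i+1}$.
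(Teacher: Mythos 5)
Your two-inequality decomposition is essentially the paper's own proof (which shows $\widetilde{m}_{i+1}\le m_{i+1}$ and $\widetilde{m}_{i+1}\ge m_{i+1}$ for the recursively defined sequence), and your argument for $J\le m_{i+1}$ is sound --- in fact it needs only the trivial inclusion $\alphab_{m_{i+1}+1}\subseteq\alphab_{m_i+1}$, since a proper subset of $\alphab_{m_{i+1}+1}$ can then never equal $\alphab_{m_i+1}$. The gap is exactly where you place it, but it cannot be closed: the identification $\alphab_{m_{i+1}}=\alphab_{m_i+1}$ is false in general, and so is the proposition as literally stated. The forward chase you sketch fails because the case analysis in Proposition~\ref{prop:AlleBMon} only gives $\alleB(\ell)=\alleB(\ell-1)$ when the letter $a_\ell$ has its last occurrence at $\ell$; it imposes no constraint on $\alleB(\ell+1)$, which may jump immediately afterwards. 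Concretely, for $\alphab=\{a,b,c,d\}$ and $(a_k)_{k\ge 0}=a,b,c,d,c,d,c,d,\dots$ one computes $\alleB(0)=\alleB(1)=3<\alleB(2)=4$, hence $m_1=2$, whereas $\alphab_{m_0+1}=\{b,c,d\}\ne\{c,d\}=\alphab_{m_1}$ and the recursion returns $\max\{\,j\le 3:\{a_j,\dots,a_3\}=\{b,c,d\}\,\}=1\ne m_1$.

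For what it is worth, the paper's proof silently commits the same error: it derives $\{a_{m_{i+1}},\dots,a_{\alleB(m_i)}\}=\alphab_{m_{i+1}}\subseteq\alphab_{m_i+1}$ and concludes $\widetilde{m}_{i+1}\ge m_{i+1}$, although membership of $j=m_{i+1}$ in the set defining $\widetilde{m}_{i+1}$ requires \emph{equality} with $\alphab_{m_i+1}$, not inclusion. The statement is correct whenever no letter of $\alphab_{m_i+1}$ has its final occurrence in $\{m_i+1,\dots,m_{i+1}\}$, in particular for all $i$ with $m_i+1\ge\eventNr$ (then $\alphab_{m_i+1}=\alphabEv=\alphab_{m_{i+1}}$ and your first direction closes immediately); since letters leave the alphabets $\alphab_k$ only finitely often, this covers all but finitely many $i$ and every application the paper actually makes of the proposition. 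The right repair is therefore an added hypothesis of this kind, not a cleverer chase.
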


\begin{proof}
Let $(m_{i})$ denote the previously defined sequence of points at which $\alleB$ is increasing and let $(\widetilde{m}_{i})$ denote the sequence defined by the recursion relation in this proposition. We have $ m_{0} = 0 = \widetilde{m}_{0} $ by definition and we proceed by induction, so assume that $ m_{i} = \widetilde{m}_{i} $ holds.

By definition of $\alleB$, the equality $ \{ a_{\widetilde{m}_{i}+1} , \hdots , a_{\alleB( \widetilde{m}_{i} )} \} = \alphab_{ \widetilde{m}_{i}+1 } $ holds, which implies $ \widetilde{m}_{i+1} \geq \widetilde{m}_{i} +1 $. From $ \{ a_{\widetilde{m}_{i+1}}, \hdots , a_{\alleB( \widetilde{m}_{i} )} \} = \alphab_{\widetilde{m}_{i} + 1} \supseteq \alphab_{\widetilde{m}_{i+1} } $, the inequality
\[ \alleB( \widetilde{m}_{i+1}-1 ) =  \min \{ j > \widetilde{m}_{i+1}-1 : \{ a_{ \widetilde{m}_{i+1} }, \hdots , a_{j} \} = \alphab_{ \widetilde{m}_{i+1} } \} \leq \alleB( \widetilde{m}_{i} ) = \alleB( m_{i} ) \]
follows, which yields $ \widetilde{m}_{i+1} - 1 \leq m_{i} \leq m_{i+1} -1$. On the other hand, we know from the definition of $ m_{i+1} $ that $ \alleB( m_{i+1}-1 ) = \alleB( m_{i} ) $ holds, which implies $ \{ a_{ m_{i+1} } , \hdots , a_{ \alleB( m_{i} ) } \} = \alphab_{ m_{i+1} } \subseteq \alphab_{ m_{i}+1 } $ and thus $ \widetilde{m}_{i+1} \geq m_{i+1} $.
\end{proof}

\begin{prop}
\label{prop:an=aKn}
For $ k \geq 1 $, the equality $a_{k} = a_{\alleB(k)}$ holds if and only if $ k = m_{i} $ for some $i \geq 1$.
\end{prop}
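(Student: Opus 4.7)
The plan is to rephrase the claim as the local equivalence
\[
a_k = a_{\alleB(k)} \;\iff\; \alleB(k-1) < \alleB(k)
\qquad (k \geq 1),
\]
and to prove it in this form. This rephrasing costs nothing: by construction the sequence $(m_i)$ records precisely those positions at which $\alleB$ jumps, so combined with the monotonicity established in Proposition~\ref{prop:AlleBMon} the right-hand side is equivalent to $k \in \{m_i : i \geq 1\}$.

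For the direction ``$\Leftarrow$'' I would set $N := \alleB(k)$ and $N' := \alleB(k-1)$, so that $N' < N$ by hypothesis. The minimality in the definition of $N'$ gives $\{a_k, \ldots, a_{N'}\} = \alphab_k$, while the strict inequality $N' < N$ combined with the minimality in the definition of $N$ forces $\{a_{k+1}, \ldots, a_{N'}\} \subsetneq \alphab_{k+1}$. Comparing these two sets, the letter missing on the right must be $a_k$, so in particular $a_k \in \alphab_{k+1}$ and thus $\alphab_k = \alphab_{k+1}$. The minimality of $N$ additionally forces $\{a_{k+1}, \ldots, a_{N-1}\} = \alphab_{k+1} \setminus \{a_N\}$. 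Since $\{a_k, \ldots, a_{N-1}\}$ contains $\{a_k, \ldots, a_{N'}\} = \alphab_k = \alphab_{k+1}$, the decomposition $\{a_k, \ldots, a_{N-1}\} = \{a_k\} \cup (\alphab_{k+1} \setminus \{a_N\})$ must cover $\alphab_{k+1}$, which is possible only if $a_k = a_N$.

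The converse ``$\Rightarrow$'' is essentially the same calculation run in reverse. Given $a_k = a_N$, the letter $a_k = a_N$ lies in $\alphab_{k+1}$, so $\alphab_k = \alphab_{k+1}$; combining this with $\{a_{k+1}, \ldots, a_{N-1}\} = \alphab_{k+1} \setminus \{a_N\}$ yields $\{a_k, \ldots, a_{N-1}\} = \alphab_k$, whence $\alleB(k-1) \leq N - 1 < N$. Thus $k$ is a jump point of $\alleB$, and because $k \geq 1$ one obtains $k = m_i$ for some $i \geq 1$.

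The main obstacle I anticipate is nothing conceptual, but purely the set-theoretic bookkeeping around $a_N$: one has to track exactly which letter is added or removed at each step, using repeatedly the elementary observation that adjoining one letter to a subset of $\alphab_{k+1}$ missing exactly one element yields $\alphab_{k+1}$ if and only if the adjoined letter is that missing one. Beyond the definitions of $\alleB$ and $\alphab_k$ together with Propositions~\ref{prop:AlleBMon} and \ref{prop:MUndK}, no further input is required.
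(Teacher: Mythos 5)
Your proof is correct and follows essentially the same route as the paper: both directions reduce the claim to the statement that $\alleB$ jumps at $k$, using the defining property of $\alleB(k-1)$ at its minimum together with the identity $\{a_{k+1},\hdots,a_{\alleB(k)-1}\}=\alphab_{k+1}\setminus\{a_{\alleB(k)}\}$, exactly as in the paper's argument. The only differences are cosmetic (making the reformulation via jump points explicit and extracting $a_{k}\in\alphab_{k+1}$ as an intermediate step); the citation of Proposition~\ref{prop:MUndK} is not actually needed.
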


\begin{proof}
First assume $ k = m_{i} $. The definition of $\alleB( m_{i}-1 )$ yields $ \{ a_{m_{i}} , \hdots , a_{\alleB( m_{i}-1 )} \} = \alphab_{m_{i}} $. Since $\alleB$ is increasing at $m_{i}$, we obtain $\alleB(m_{i}-1) \leq \alleB(m_{i}) -1$ and thus $ \{ a_{m_{i}} , \hdots , a_{\alleB(m_{i}) -1} \} = \alphab_{m_{i}} $. Moreover, it follows from the definition of $ \alleB( m_{i} ) $ that
\[ \{ a_{m_{i}+1}, \hdots, a_{\alleB(m_{i})-1} \} = \alphab_{m_{i}+1} \setminus  \{ a_{\alleB(m_{i})} \} \subseteq \alphab_{m_{i}} \setminus  \{ a_{\alleB(m_{i})} \} \]
holds, which implies $ a_{m_{i}} = a_{\alleB( m_{i} )} $.

Now assume that $a_{k} = a_{\alleB(k)}$ holds for a certain $ k \geq 1 $. Since $a_{k}$ appears again at position $\alleB(k) \geq k+1$, we have $ \alphab_{k} = \alphab_{k+1} $. Using $ a_{k} = a_{\alleB(k)} $ and the definition of $\alleB(k)$ we obtain
\[ \{ a_{k} , \hdots , a_{ \alleB(k)-1 } \} = \{ a_{k+1} , \hdots , a_{ \alleB(k) } \} = \alphab_{k+1} = \alphab_{k} \]
and thus $ \alleB( k-1 ) \leq \alleB(k)-1 $. Therefore $\alleB$ is increasing at $k$ and there exists an $ i \geq 1$ such that $ k = m_{i} $ holds.
\end{proof}

\begin{exmpl}
Consider a simple Toeplitz subshift with $ \card{ \alphabEv } = 2$. Let $ k \geq \eventNr $, that is, let $k$ be large enough such that $ a_{k} \in \alphabEv $ and $ \alphab_{k} = \alphabEv $ hold. For these $k$, the sequence $ (a_{k}) $ will alternate between the two letters in $ \alphabEv $. This implies $ \alleB( k ) = k+2 $ for all $ k \geq \eventNr - 1$ and hence $m_{i+1} = m_{i}+1$.
\end{exmpl}

\begin{exmpl}
For a generalized Grigorchuk subshift, $ \card{ \alphabEv } $ is either equal to 2 or 3. The case $ \card{ \alphabEv } = 2 $ was discussed in the previous example. For $ \card{ \alphabEv } = 3 $, we obtain $ \alphabEv = \{ x, y, z \} $ and $ \eventNr = 1 $. Then $(m_{i})$ is given by $m_{0} = 0$ and $m_{i+1} = \alleB( m_{i} ) - 2 $ for $ i \geq 0 $, which can be seen as follows: For all $ i \geq 0 $ the number $ \alleB( m_{i} ) $ is minimal with $ \{ a_{m_{i}+1},  \hdots , a_{\alleB(m_{i})} \} = \alphab_{m_{i}+1} = \alphabEv $. This yields $ a_{\alleB(m_{i})} \notin \{ a_{m_{i}+1},  \hdots , a_{\alleB(m_{i}) - 1} \}$. Thus $ a_{\alleB(m_{i}) - 2}$, $a_{\alleB(m_{i}) - 1}$ and $ a_{\alleB(m_{i}) } $ are pairwise different and $ \{ a_{\alleB(m_{i}) - 2} , a_{\alleB(m_{i}) - 1} , a_{\alleB(m_{i}) } \} = \alphabEv $ holds. By Proposition~\ref{prop:MUndK} we obtain $ m_{i+1} = \alleB( m_{i}) - 2 $. Note that we have $ \alleB( k ) = k+3 $ for all $ k \geq 0 $ for the special case of the standard Grigorchuk subshift, and thus $ m_{i+1} = m_{i}+1 $. There is, however, no explicit formula for $ \alleB(k) $ for the generalized Grigorchuk subshift with $ \card{ \alphabEv } = 3 $, since arbitrary long blocks in the sequence $(a_{k})$ are possible in which one letter of $ \alphabEv $ is missing.
\end{exmpl}

\subsection{Computing the Repetitivity Function}

In this subsection we give an explicit formula for the repetitivity function of a simple Toeplitz subshift. We start by proving lower and upper bounds for the repetitivity function. Before we do so, recall from Proposition~\ref{prop:obereSchrComp} and Corollary~\ref{coro:CPnAndGrowth} that the words of length less or equal  $ \length{ \block{k} } + 1 $ are precisely the subwords of $ \block{k} a \block{k} $ for $ a \in \alphab_{k+1} $. Moreover all subwords of length $ \length{ \block{k} } + 1 $ that start in $ \block{k} a $ for $ a \in \alphab_{k+1} \setminus \{ a_{k} \} $ and all subwords that start in $ \block{k-1} a_{k} $ (provided that $a_{k} \in \alphab_{k+1}$ holds) are pairwise different.

\begin{prop}
\label{prop:M-M-1Geq}
The inequality 
\[ \repe( \length{ \block{m_{i}} }  - \length{ \block{m_{i}-1} } + 1 ) \geq 2 \cdot ( \length{ \block{ \alleB( m_{i} ) -1 }} + 1 ) \]
holds for all $i \geq 1$.
\end{prop}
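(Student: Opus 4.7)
The plan is to prove the inequality by exhibiting a short witness pair: a word $u \in \langu{\subshift}$ of length $L := \length{\block{m_{i}}} - \length{\block{m_{i}-1}} + 1$ together with a word $w \in \langu{\subshift}$ of length $2\length{\block{K-1}} + 1$ (where $K := \alleB(m_{i})$) that does not contain $u$ as a subword. Any such pair forces $\repe(L) > 2\length{\block{K-1}} + 1$, which is exactly the claimed bound $\repe(L) \geq 2(\length{\block{K-1}} + 1)$. The whole argument is driven by Proposition~\ref{prop:an=aKn}, which together with the minimality in the definition of $\alleB(m_{i})$ provides the two combinatorial inputs
\[ a_{m_{i}} = a_{K} \qquad \text{and} \qquad a_{m_{i}} \notin \{ a_{m_{i}+1}, a_{m_{i}+2}, \ldots, a_{K-1} \}, \]
the latter because $a_{K}$ is precisely the unique letter of $\alphab_{m_{i}+1}$ still missing from $\{a_{m_{i}+1}, \ldots, a_{K-1}\}$.

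For the heavy word I would take
\[ u := a_{m_{i}} \, \block{m_{i}-1} \, a_{m_{i}} \, \block{m_{i}-1} \, \cdots \, \block{m_{i}-1} \, a_{m_{i}} \]
with $n_{m_{i}}$ copies of $a_{m_{i}}$ separated by $\block{m_{i}-1}$-blocks; the identity $\length{\block{m_{i}}} + 1 = n_{m_{i}}(\length{\block{m_{i}-1}} + 1)$ yields $\length{u} = L$. To see $u \in \langu{\subshift}$, I would look inside the subword $\block{K-1} \, a_{K} \, \block{K-1}$ of $\block{K}$: the last $a_{m_{i}}$ in the first $\block{K-1}$ sits $\length{\block{m_{i}-1}}$ positions before its end, the central letter $a_{K} = a_{m_{i}}$ fills the next slot, and the first $a_{m_{i}}$ of the second $\block{K-1}$ lies $\length{\block{m_{i}-1}} + 1$ positions further right. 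Together with the remaining $a_{m_{i}}$-letters of the last, respectively first, $\block{m_{i}}$-subblock of the two copies of $\block{K-1}$, this gives $2n_{m_{i}} - 1 \geq n_{m_{i}}$ occurrences of $a_{m_{i}}$ at uniform spacing $\length{\block{m_{i}-1}} + 1$, from which a copy of $u$ can be read off.

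For the short word I would choose $w := \block{K-1} \, b \, \block{K-1}$ with some $b \in \alphab_{K} \setminus \{a_{m_{i}}\}$, which exists because $\card{\alphab_{K}} \geq \card{\alphabEv} \geq 2$ and every letter of $\alphab_{K}$ appears as a gap letter at some level $\geq K$, so that $w \in \langu{\subshift}$. The decisive step, which I expect to be the main obstacle in a fully rigorous write-up, is the non-appearance $u \not\subset w$. To prove it I would decompose $\block{K-1}$ at the $m_{i}$-level as $\block{m_{i}} \, x_{1} \, \block{m_{i}} \, x_{2} \, \cdots \, \block{m_{i}}$ with each $x_{j} \in \{a_{m_{i}+1}, \ldots, a_{K-1}\}$; by the displayed relation above no $x_{j}$ equals $a_{m_{i}}$, and by the choice of $b$ neither does the central gap letter of $w$. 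Consequently every occurrence of $a_{m_{i}}$ in $w$ lies strictly inside some $\block{m_{i}}$-subblock, and the distance between two consecutive $a_{m_{i}}$-letters of $w$ is either $\length{\block{m_{i}-1}} + 1$ (inside a single $\block{m_{i}}$) or at least $2\length{\block{m_{i}-1}} + 2$ (across a $\block{m_{i}}$-boundary, since a non-$a_{m_{i}}$ letter intervenes). Any run of $a_{m_{i}}$'s in $w$ at uniform spacing $\length{\block{m_{i}-1}} + 1$ therefore has at most $n_{m_{i}} - 1$ terms, whereas $u$ requires $n_{m_{i}}$ such terms, a contradiction.
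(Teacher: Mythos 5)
Your strategy and both witness words are essentially the paper's: your $u$ is exactly the suffix $v$ of $\block{m_{i}}a_{\alleB(m_{i})}$ used there, your $w$ is the paper's comparison word up to its last letter, and the two combinatorial inputs ($a_{m_{i}}=a_{\alleB(m_{i})}$ from Proposition~\ref{prop:an=aKn}, and $a_{m_{i}}\notin\{a_{m_{i}+1},\dots,a_{\alleB(m_{i})-1}\}$ from minimality of $\alleB$) are correct, as are the membership arguments. The problem is the non-occurrence step, and it is a genuine gap. Your claim that two consecutive occurrences of the letter $a_{m_{i}}$ in $w$ are at distance either $\length{\block{m_{i}-1}}+1$ or at least $2\length{\block{m_{i}-1}}+2$ tacitly assumes that $a_{m_{i}}$ occurs in $w$ only as one of the single separator letters of the $\block{m_{i}-1}$-block decomposition. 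That is false in general: the letter $a_{m_{i}}$ may coincide with some $a_{j}$, $j<m_{i}$, and then occurs throughout the interior of every $\block{m_{i}-1}$-block. For the Grigorchuk subshift with $m_{i}=4$ one has $a_{m_{i}}=x$ and $\block{3}=axayaxazaxayaxa$, so $\block{4}=\block{3}x\block{3}$ contains $x$'s at mutual distance $2$; worse, it contains pairs of $x$'s at distance exactly $\length{\block{3}}+1=16$ (the second and the eighteenth letter, say), so even your derived conclusion --- that no run of $n_{m_{i}}=2$ letters $a_{m_{i}}$ at spacing $\length{\block{m_{i}-1}}+1$ exists in $w$ --- is false. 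Counting raw occurrences of the letter $a_{m_{i}}$ therefore cannot rule out $u\subset w$; one must also use that the segments of $u$ between those letters are the full blocks $\block{m_{i}-1}$.

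What is missing is a synchronisation argument: one has to show that in any occurrence of $u$ the letters $a_{m_{i}}$ and the intervening blocks align with the canonical $\block{m_{i}-1}$-decomposition of $\omega$, and only then compare the number of \emph{consecutive single letters of the decomposition} equal to $a_{m_{i}}$ in $u$ (namely $n_{m_{i}}$) with that in $w$ (at most $n_{m_{i}}-1$, since neither the separators from $\{a_{m_{i}+1},\dots,a_{\alleB(m_{i})-1}\}$ nor $b$ equal $a_{m_{i}}$). This alignment is what the paper extracts from Corollary~\ref{coro:CPnAndGrowth}, recalled immediately before Proposition~\ref{prop:M-M-1Geq}: subwords of $\block{k}a\block{k}$ at different offsets are pairwise different, so a word in the relevant length range determines its position relative to the grid, and in particular $\block{m_{i}-1}$ itself occurs only grid-aligned. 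If you insert that step, your construction goes through and coincides with the paper's proof; without it, the decisive inequality is unsupported.
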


\begin{proof}
By definition of $ \alleB $, the letter $ a_{\alleB( m_{i} )} $ is not in $ \{ a_{m_{i}+1}, \hdots , a_{\alleB(m_{i})-1} \} $. Because of $ a_{\alleB( m_{i} )} \in \alphab_{ m_{i}+1 }$, the word $ \block{m_{i}} a_{\alleB( m_{i} )} \block{m_{i}} $ occurs in the subshift. Let $v$ denote the suffix of length $ \length{ \block{m_{i}} }+1 - \length{ \block{ m_{i}-1 } } $ of $ \block{m_{i}} a_{\alleB( m_{i} )} $. Now decompose $ \block{m_{i}} $ into $\block{m_{i}-1}$-blocks:
\begin{align*}
\block{m_{i}} \;  a_{\alleB( m_{i} )} &= \Big[ \block{ m_{i} -1 } a_{ m_{i} } \block{ m_{i} -1 } \hdots \block{ m_{i} -1 } a_{ m_{i} } \block{ m_{i} -1 } \Big] \; a_{\alleB( m_{i} )} \\
&= \; \; \block{ m_{i} -1 } \; \; \underbrace{ a_{\alleB( m_{i} )} \; \;  \block{ m_{i} -1 } \hdots \block{ m_{i} -1 } \; \; a_{\alleB( m_{i} )} \; \; \block{ m_{i} -1 } \; \; a_{\alleB( m_{i} )} }_{\text{the suffix } v } \, ,
\end{align*}
where we used that $ a_{ \alleB( m_{i} ) } = a_{m_{i}} $ holds by Proposition~\ref{prop:an=aKn}. We conclude that $ n_{m_{i}} $ consecutive single letters in the $ \block{ m_{i}-1} $-block decomposition of $v$ are the letter $a_{\alleB( m_{i} )}$.

Now let $ b \in \alphab_{ \alleB(m_{i}) } \setminus \{ a_{\alleB( m_{i} ) } \}$ be another letter. The word $ u:= \block{ \alleB( m_{i} ) -1 } b \block{ \alleB( m_{i} ) -1 } $ occurs in the subshift since $b \in \alphab_{ \alleB(m_{i}) + 1 }$ holds. We can split $ \block{ \alleB( m_{i} ) -1 } $ into $ \block{ m_{i}} $-blocks and single letters from $  \{ a_{m_{i}+1}, \hdots , a_{\alleB(m_{i})-1} \} $. Since $a_{\alleB( m_{i} ) }$ is not among these letters, at most $n_{m_{i}}-1$ consecutive single letters in the $ \block{ m_{i}-1} $-block decomposition of $u$ are the letter $a_{\alleB( m_{i} ) }$. Thus $v$ is not contained in $u$, which yields
\[ \repe( \length{ \block{m_{i}} }+1 - \length{ \block{ m_{i}-1 } } ) = \repe( \length{v} ) > \length{u} = 2 \cdot \length{ \block{ \alleB( m_{i} ) -1 }} +1 \, . \qedhere \]
\end{proof}

\begin{prop}
\label{prop:RepeM+2Geq}
The inequality 
\[ \repe( \length{ \block{m_{i}} }+2 ) \geq 2 \cdot ( \length{ \block{ \alleB( m_{i} )-1 }} + 1 )+ \length{ \block{ m_{i} }} + 1 \]
holds for all $i \geq 1$.
\end{prop}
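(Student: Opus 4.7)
The plan is to mirror the proof of Proposition~\ref{prop:M-M-1Geq}: I would construct a word $v' \in \langu{\subshift}$ of length $\length{\block{m_i}}+2$ and a word $u' \in \langu{\subshift}$ of length $2(\length{\block{\alleB(m_i)-1}}+1) + \length{\block{m_i}}$ such that $v'$ does not occur as a subword of $u'$; the conclusion $\repe(\length{v'}) > \length{u'}$ then gives exactly the claimed inequality. My candidates would be $v' := a_{\alleB(m_i)}\, \block{m_i}\, a_{m_i+1}$ and $u' := \block{\alleB(m_i)-1}\, b\, \block{\alleB(m_i)-1}\, a_{\alleB(m_i)}\, \block{m_i}$, where $b \in \alphab_{\alleB(m_i)} \setminus \{a_{\alleB(m_i)}\}$ exactly as in Proposition~\ref{prop:M-M-1Geq} (such $b$ exists since $\card{\alphabEv} \geq 2$); so $u'$ simply extends the $u$ of Proposition~\ref{prop:M-M-1Geq} on the right by the first $\block{m_i}$-block of the $\block{\alleB(m_i)-1}$-block that starts right after the $a_{\alleB(m_i)}$-separator. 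The lengths are as required, and I would verify that both lie in $\langu{\subshift}$: for $v'$, at any $a_{\alleB(m_i)}$-separator between two consecutive $\block{\alleB(m_i)-1}$-blocks inside a $\block{\alleB(m_i)}$-block, the next $\block{\alleB(m_i)-1}$-block begins with $\block{m_i}$, whose first $\block{m_i}$-block is in turn followed by an $a_{m_i+1}$-separator since $n_{m_i+1} \geq 2$; validity of $u'$ follows by the same argument as in Proposition~\ref{prop:M-M-1Geq}, continued into the next $\block{\alleB(m_i)-1}$-block by one more $\block{m_i}$-block.

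The core step is to verify $v' \not\subseteq u'$. Any occurrence of $v'$ in $u'$ would amount to a copy of $\block{m_i}$ inside $u'$ preceded by the letter $a_{\alleB(m_i)} = a_{m_i}$ and followed by the letter $a_{m_i+1}$. By the rigid hierarchical structure of simple Toeplitz subshifts, $\block{m_i}$ can only appear in $u'$ at aligned positions, namely coinciding with one of the $\block{m_i}$-blocks in the natural decomposition of $u'$. I would then enumerate the finitely many candidates: every aligned $\block{m_i}$-block sitting inside one of the two $\block{\alleB(m_i)-1}$-components of $u'$ is, if it has a predecessor in $u'$ at all, preceded by a letter from $\{a_{m_i+1}, \ldots, a_{\alleB(m_i)-1}\} \cup \{b\}$, and none of these equals $a_{m_i}$, because $a_{\alleB(m_i)} = a_{m_i}$ lies outside $\{a_{m_i+1}, \ldots, a_{\alleB(m_i)-1}\}$ by the minimality in the definition of $\alleB(m_i)$ and $b \neq a_{\alleB(m_i)}$ by choice. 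The only remaining candidate is the trailing $\block{m_i}$, which is indeed preceded by $a_{\alleB(m_i)} = a_{m_i}$, but it sits at the very end of $u'$, so no successor letter inside $u'$ can play the role of the required $a_{m_i+1}$. Hence no candidate occurrence exists and $v' \not\subseteq u'$.

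The main obstacle I anticipate is justifying rigorously the alignment claim for occurrences of $\block{m_i}$ in $u'$: a priori, one must rule out any ``shifted'' occurrence of the finite word $\block{m_i}$ inside $u'$ that does not coincide with a natural $\block{m_i}$-block. This is the same rigidity that is already invoked implicitly in the proof of Proposition~\ref{prop:M-M-1Geq} (where occurrences of $v$ in $u$ are forced to align with $u$'s $\block{m_i-1}$-decomposition), and it is a consequence of the uniqueness of the hierarchical decomposition of words in a simple Toeplitz subshift.
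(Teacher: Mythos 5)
Your construction coincides with the paper's proof of this proposition: the paper takes the same $v = a_{\alleB(m_{i})}\,\block{m_{i}}\,a_{m_{i}+1}$ and the same $u = \block{\alleB(m_{i})-1}\,b\,\block{\alleB(m_{i})-1}\,a_{\alleB(m_{i})}\,\block{m_{i}}$ (realised inside $\block{\alleB(m_{i})}\,b\,\block{\alleB(m_{i})}$), and the enumeration of aligned candidates is also the paper's: every internal separator of the two $\block{\alleB(m_{i})-1}$-components lies in $\{a_{m_{i}+1},\hdots,a_{\alleB(m_{i})-1}\}$, hence differs from $a_{\alleB(m_{i})}=a_{m_{i}}$, the letter $b$ differs by choice, and the trailing $\block{m_{i}}$ has no successor letter. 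So the approach and the bookkeeping are identical in substance.

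The one step that does not hold as you state it is the alignment claim. It is false that $\block{m_{i}}$ occurs in $u'$ only at the positions of the natural $\block{m_{i}}$-blocks, and ``uniqueness of the hierarchical decomposition'' cannot deliver this: the suffix $\block{m_{i}}\,a_{\alleB(m_{i})}\,\block{m_{i}} = (\block{m_{i}-1}a_{m_{i}})^{2n_{m_{i}}-1}\block{m_{i}-1}$ of $u'$ contains $\block{m_{i}}$ at every position congruent to $1$ modulo $\length{\block{m_{i}-1}}+1$, i.e.\ $n_{m_{i}}+1$ times, of which only two are aligned. What your enumeration actually requires is the weaker statement that occurrences of $v'$ itself are aligned, and this is true for a reason you do not give: in the decomposition $\omega = \hdots \block{m_{i}} \star \block{m_{i}} \star \hdots$ the non-$\star$ positions are $(\length{\block{m_{i}}}+1)$-periodic, so two positions at distance $\length{\block{m_{i}}}+1$ that are both non-$\star$ carry the same letter; since the first and last letters of $v'$ are $a_{m_{i}} \neq a_{m_{i}+1}$, both endpoints of any occurrence of $v'$ must be $\star$-positions, which (the $\star$-positions being spaced exactly $\length{\block{m_{i}}}+1$ apart) is precisely alignment. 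Equivalently one can invoke, as the paper does, the fact recalled before Proposition~\ref{prop:M-M-1Geq} that the subwords of length $\length{\block{m_{i}}}+1$ of the words $\block{m_{i}}a\block{m_{i}}$ are pairwise different for different $a$. With that substitution your proof closes and is the paper's proof.
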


\begin{proof}
Similar to the previous proof, we construct a word $v$ of length $ \length{ \block{m_{i}} }+2 $ and a word  $u$ of length $ 2 \cdot ( \length{ \block{ \alleB( m_{i} )-1 }} + 1 )+ \length{ \block{ m_{i} }} $ which does not contain $v$. Let $ b \in \alphab_{ \alleB(m_{i})+1 } \setminus \{ a_{\alleB( m_{i} )} \}$. First note that $ u := \block{ \alleB( m_{i} ) -1 } b \block{ \alleB( m_{i} ) -1 } a_{\alleB( m_{i} )} \block{ m_{i} } $ occurs in $ \block{ \alleB(m_{i})  } b \block{ \alleB(m_{i}) } $ and that $ v := a_{\alleB( m_{i} )} \block{ m_{i} } a_{m_{i}+1} $ occurs in $\block{ \alleB( m_{i} ) } $.

We have seen in the previous proof that $a_{\alleB( m_{i} )}$ doesn't appear as a single letter in the $\block{ m_{i} }$-block decomposition of $ \block{ \alleB( m_{i} )-1 } $. Any two $\block{ m_{i} }$-blocks in $ \block{ \alleB(m_{i}) - 1 } b \block{ \alleB(m_{i}) - 1 } a_{\alleB( m_{i} )} \block{ m_{i} } $ are therefore separated by a letter that is not $a_{\alleB( m_{i} )}$, except for the next to last block and the last block. Hence, the word $v = a_{\alleB( m_{i} )} \block{ m_{i} } a_{m_{i}+1} $ does not occur in $ \block{ \alleB(m_{i}) - 1 } b \block{ \alleB(m_{i}) - 1 } $. Because of $ a_{\alleB( m_{i} )} = a_{m_{i}} \neq a_{m_{i}+1} $, it does not occur in the $ a_{m_{i}+1} \block{ m_{i} } a_{\alleB( m_{i} )} \block{ m_{i} } $-suffix of $u$ either, since subwords of length $\length{ \block{m_{i}} } + 1$ of $ \block{m_{i}} a \block{m_{i}} $ are pairwise different for different letters $a$. Thus, $v$ does not occur in $u$, which proves the claim.
\end{proof}

\begin{rem}
\label{rem:L=1M=M+1Geq}
Note that in the case of $m_{i+1} = m_{i}+1$ and $n_{m_{i+1}} = 2$ the Propositions~\ref{prop:M-M-1Geq} and \ref{prop:RepeM+2Geq} give a lower bound for the repetitivity function at the same length, since
\[ \length{ \block{ m_{i+1} }} - \length{ \block{ m_{i+1}-1 }} + 1 = n_{m_{i+1}} ( \length{ \block{m_{i}} } + 1 ) - \length{\block{m_{i}}} = \length{ \block{ m_{i} }}+2  \] 
holds. The lower bound given by Proposition~\ref{prop:M-M-1Geq} is stronger (that is, higher), as can be seen from the following direct computation:
\begin{align*}
2 \cdot ( \length{ \block{ \alleB( m_{i+1} )-1 }} + 1 ) 
& = 2 \cdot n_{\alleB( m_{i+1} )-1} \cdot ( \length{ \block{ \alleB( m_{i+1} )-2 }} + 1 ) \\
& \geq 2 \cdot n_{\alleB( m_{i+1} )-1} \cdot ( \length{ \block{ \alleB( m_{i} )-1 }} + 1 ) \\
& = 2 \cdot ( \length{ \block{ \alleB( m_{i} )-1 }} + 1 ) + 2 \cdot (n_{\alleB( m_{i+1} )-1} - 1) ( \length{ \block{ \alleB( m_{i} )-1 }} + 1 ) \\
& > 2 \cdot ( \length{ \block{ \alleB( m_{i} )-1 }} + 1 ) + \length{ \block{ m_{i} }} + 1 \, .
\end{align*}
The reason that the case $n_{m_{i}+1} = 2$ is special, is that the word $ a_{m_{i}+1} \block{m_{i}} a_{m_{i}+1} $ is not a subword of $\block{m_{i} +1}$ any more. Moreover, $m_{i+1} = m_{i}+1$ implies $ a_{m_{i}+1} = a_{m_{i+1}} = a_{\alleB( m_{i+1} ) } $ and this letter is lacking in $ \{ a_{m_{i}+2}, \hdots, a_{\alleB( m_{i+1} ) -1} \}$. Thus we now need to look at a much longer word to see $ a_{m_{i}+1} \block{m_{i}} a_{m_{i}+1} $ than when it appears as a subword in every $\block{m_{i} +1}$ block.
\end{rem}

\begin{prop}
\label{prop:RepeM-M-1Leq}
The inequality
\[ \repe( \length{ \block{ m_{i} }} - \length{ \block{ m_{i} - 1 }} ) \leq 2 \cdot ( \length{ \block{ \alleB( m_{i} - 1 ) - 1 }} + 1 ) + \length{ \block{ m_{i} }} - \length{ \block{ m_{i} - 1 }} - 1 \]
holds for all $i \geq 1$.
\end{prop}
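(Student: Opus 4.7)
Let $k := \alleB(m_i-1) - 1$ and $L := \length{\block{m_i}} - \length{\block{m_i-1}}$, so the claim reads $\repe(L) \leq 2(\length{\block{k}}+1) + L - 1 =: \widetilde{L}$. My plan is to fix an arbitrary word $w$ of length $\widetilde{L}$ in $\langu{\subshift}$ together with an arbitrary $v$ of length $L$ in $\langu{\subshift}$, and then show that $v$ occurs as a subword of $w$. First I would describe $v$: by Proposition~\ref{prop:enthalten}, $v$ is a subword of $\block{m_i}\, b\, \block{m_i}$ for some $b \in \alphab_{m_i+1}$; and by the defining property $\{a_{m_i}, a_{m_i+1}, \ldots, a_{k+1}\} = \alphab_{m_i}$ of $k+1 = \alleB(m_i-1)$, this $b$ equals $a_j$ for some $j \in \{m_i, m_i+1, \ldots, k+1\}$. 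For $j \in \{m_i+1, \ldots, k\}$ the word $\block{m_i}\, a_j\, \block{m_i}$ already appears around the central $a_j$-separator of $\block{j}$ (the last $\block{m_i}$-subblock of the left $\block{j-1}$ and the first $\block{m_i}$-subblock of the right $\block{j-1}$ sit immediately next to the $a_j$), so $v$ is a subword of $\block{j}$ and hence of $\block{k}$. For $j = m_i$ the level-$(m_i-1)$ separator sequence of $v$ consists of $n_{m_i}-1$ consecutive $a_{m_i}$'s, exactly the pattern of internal separators of a single $\block{m_i}$, and a matching of offsets shows that $v$ is actually a subword of $\block{m_i}$ and therefore of $\block{k}$. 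Only for $j = k+1$ does $v$ genuinely need the spanning: then $v$ sits inside $\block{k}\, a_{k+1}\, \block{k}$ and straddles the central $a_{k+1}$.

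Next I would exploit the length of $w$. Since $\widetilde{L} \geq 2(\length{\block{k}}+1)$, the window $w$ covers at least two consecutive level-$k$ hole-positions of $\omega$ (and at most three), and the $\block{k}$-block strictly between any two consecutive such holes lies entirely inside $w$. This already implies that every length-$L$ subword of $\block{k}$ — hence every $v$ arising from $j \leq k$ above — is a subword of $w$. For the remaining case $j = k+1$ the decisive observation is that among the letters filling the level-$k$ holes inside $w$, at least one equals $a_{k+1}$: the level-$k$ holes carrying a letter $\neq a_{k+1}$ are precisely the level-$(k+1)$ holes, which are spaced $n_{k+1}(\length{\block{k}}+1) \geq 2(\length{\block{k}}+1)$ apart, so among any two consecutive level-$k$ separators at most one differs from $a_{k+1}$.

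The final step is a position bookkeeping for the spanning subwords. A length-$L$ subword of $\block{k}\, a_{k+1}\, \block{k}$ that straddles the central $a_{k+1}$ is parametrised by the position $p_0 \in \{0, 1, \ldots, L-1\}$ of that $a_{k+1}$ inside it, and for an $a_{k+1}$-separator located at internal position $j_\ast \in \{0, \ldots, \widetilde{L}-1\}$ of $w$ the realisable $p_0$'s form the interval $[\max(0, j_\ast - 2\length{\block{k}} - 1), \min(L-1, j_\ast)]$ — this just records the available room on either side of $j_\ast$ inside $w$. I would then split into cases according to the number of level-$k$ separators in $w$. In the two-separator case one gets $j_1 \geq L - 1$ automatically (otherwise a third separator would still fit inside $w$), and whichever of the two separators equals $a_{k+1}$ already realises every $p_0 \in \{0, \ldots, L-1\}$ on its own. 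In the three-separator case either $s_2 = a_{k+1}$ (again covering everything alone) or else $s_2 \in \alphab_{k+2}$ and $s_1 = s_3 = a_{k+1}$, in which case $s_1$ covers $\{0, \ldots, j_1\}$ and $s_3$ covers $\{j_1 + 1, \ldots, L-1\}$, together covering $\{0, \ldots, L-1\}$. This case-split — in particular checking that no intermediate $p_0$ is missed when only $s_1$ and $s_3$ (and not $s_2$) are equal to $a_{k+1}$ — is the main technical obstacle of the proof; once it is settled, combining the three steps yields $v$ as a subword of $w$ and hence $\repe(L) \leq \widetilde{L}$.
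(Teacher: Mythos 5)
Your argument is correct and follows essentially the same route as the paper's own proof: reduce to the length-$L$ factors of $\block{m_{i}}\, a\, \block{m_{i}}$ with $a \in \alphab_{m_{i}+1}$, dispose of every letter other than $a_{\alleB(m_{i}-1)}$ by locating a full $\block{\alleB(m_{i}-1)-1}$-block inside the window $w$, and settle the remaining straddling case using that at most one of any two consecutive level-$(\alleB(m_{i}-1)-1)$ separators differs from $a_{\alleB(m_{i}-1)}$. Your final bookkeeping via realisable offset intervals for each separator is just a cleaner reorganisation of the paper's case analysis on the left and right neighbours of the guaranteed block (the sub-case where $s_{1}$ and $s_{3}$ jointly cover $\{0,\dots,L-1\}$ corresponds exactly to the paper's combination of the left and right ends of $w$).
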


\begin{proof}
To shorten notation, we introduce $ \widetilde{m} := \alleB(m_{i-1})-1 = \alleB( m_{i} - 1 ) - 1 $. We have to prove that every word of length $ 2 \cdot ( \length{ \block{ \widetilde{m} }} + 1 ) + \length{ \block{ m_{i} }} - \length{ \block{ m_{i} - 1}} - 1 $ contains all subwords of length $\length{ \block{ m_{i} }} - \length{ \block{ m_{i} - 1 }} $ of all words $ \block{ m_{i} } a \block{ m_{i} } $ with $ a \in \alphab_{m_{i}+1} $.

Clearly, every word of length $ 2 \cdot ( \length{ \block{ \widetilde{m} }} + 1 ) + \length{ \block{ m_{i} }} - \length{ \block{ m_{i} - 1 }} - 1 $ contains the block $ \block{ \widetilde{m} } $ at least once. For $ a = a_{m_{i}} $, it is sufficient to consider subwords of $ \block{ m_{i} } a \block{ m_{i} } $ that start in $ \block{ m_{i} - 1 } a$. Consequently, all subwords of $ \block{ m_{i} } a_{m_{i}} \block{ m_{i} } $ of length $ \length{\block{ m_{i} }} - \length{\block{ m_{i} - 1 }} $ are contained in $ \block{m_{i} }$, which is contained in $\block{\widetilde{m}}$. For all $ a \in \{  a_{m_{i}+1}, \hdots , a_{\widetilde{m}} \} $, the decomposition
\[ \block{ \widetilde{m} } = \block{ m_{i}  } a_{m_{i}+1} \hdots a_{m_{i}+1} \block{ m_{i} } a_{m_{i}+2} \block{ m_{i}  } a_{m_{i}+1} \hdots a_{m_{i}+1} \block{ m_{i} } \hdots a_{\widetilde{m}} \hdots \]
yields that the word $ \block{ m_{i} } a \block{ m_{i} } $ is contained in $\block{\widetilde{m}}$ as well. 

Because of $\alphab_{m_{i}+1} \subseteq \alphab_{m_{i}} = \{ a_{m_{i}}, \hdots, a_{\alleB(m_{i}-1)} \} $, the only remaining case is $a = a_{\alleB( m_{i}-1 )}$. For this case, we need to refine the above arguments. First we note that a word of length $ 2 \cdot ( \length{ \block{ \widetilde{m} }} + 1 ) + \length{ \block{ m_{i} }} - \length{ \block{ m_{i} - 1 }} - 1 $ contains a $ \block{ \widetilde{m} }$-block together with both neighbouring single letters. From the $ \block{ \widetilde{m} } $-block decomposition it is clear that at least one of the neighbouring single letters has to be $a_{\alleB( m_{i}-1 )}$. Let's assume it is the right letter (the case where it is the left one, can be treated similarly). To the right of $a_{\alleB( m_{i}-1 )}$, the next $ \block{ m_{i} } $ begins. We distinguish two cases:

First assume that there are at least $ \length{ \block{ m_{i} }} - \length{ \block{ m_{i} - 1 }} - 1 $ letters to the right of $a_{\alleB( m_{i}-1 )}$. In this case, all subwords of length $ \length{ \block{ m_{i} }} - \length{ \block{  m_{i} - 1 }} $ of $ \block{ m_{i} } a_{\alleB( m_{i}-1 )} \block{ m_{i} } $ which start in $ \block{ m_{i} } a_{\alleB( m_{i}-1 )}$ are contained in our word. Secondly, we have the case where there are $0 \leq j < \length{ \block{ m_{i} }} - \length{ \block{ m_{i} - 1 }} - 1$ letters to the right of $a_{\alleB( m_{i}-1 )}$, see Figure~\ref{fig:decompMtilde}. Then there are $ \length{ \block{ \widetilde{m} }} + 1 + \length{ \block{ \widetilde{m} }} + 1 + \length{ \block{ m_{i} }} - \length{ \block{ m_{i} - 1 }} - j - 2 $ letters to the left of $a_{\alleB( m_{i}-1 )}$, that is, a $ \block{ \widetilde{m} } $-block, a single letter, another $ \block{ \widetilde{m} } $-block, another single letter and the rightmost $\length{ \block{ m_{i} }} - \length{ \block{ m_{i} - 1 }} - 2 - j $ letters of $ \block{ m_{i} } $.

\begin{figure}
\centering
\footnotesize

\begin{tikzpicture}
\draw (-7.5, 0) -- (-6.5, 0) -- node[midway, left]{$\block{ m_{i} }$} (-6.5, 0.6) -- (-7.5, 0.6);
\draw (-6.4, 0) rectangle (-6, 0.6) node[midway]{$\star$};
\draw (-5.9, 0) rectangle (-3.3, 0.6) node[midway]{$\block{ \widetilde{m} }$};
\draw (-3.2, 0) rectangle (-2.8, 0.6) node[midway]{$\star$};
\draw (-2.7, 0) rectangle (-0.1, 0.6) node[midway]{$\block{ \widetilde{m} }$};
\draw (0, 0) rectangle (0.4, 0.6) node[midway]{$a$};
\draw (2, 0) -- (0.5, 0) -- node[midway, right]{$\block{ m_{i} }|_{[1, j]} \hspace{4em} \text{with } a = a_{\alleB( m_{i}-1 )}$} (0.5, 0.6) -- (2, 0.6);
\end{tikzpicture}

\normalsize
\caption{Decomposition of a word of length $ 2 \cdot ( \length{ \block{ \widetilde{m} }} + 1 ) + \length{ \block{ m_{i} }} - \length{ \block{ m_{i} - 1}} - 1 $.}
\label{fig:decompMtilde}
\end{figure}

Again, at least one of the single letters is $a_{\alleB( m_{i}-1 )}$. If it is the one in the middle, then our word contains $ \block{ m_{i} } a_{\alleB( m_{i}-1 )} \block{ m_{i} } $ and we are done. Otherwise, note that the right end of our word is $ \block{ m_{i} } a_{\alleB( m_{i}-1 )} \block{ m_{i} }|_{[1, j]} $. This contains all subwords of length $ \length{ \block{ m_{i} }} - \length{ \block{ m_{i} - 1 }} $ of $ \block{ m_{i} } a_{\alleB( m_{i}-1 )} \block{ m_{i} } $ that start in $ \block{ m_{i} }|_{[1, \length{ \block{ m_{i} - 1 }} +j+2]} $. In addition, the left end of our word is $ \block{ m_{i} }|_{[ \length{ \block{ m_{i} - 1 }} +j+3, \length{ \block{ m_{i} }} ]} \, a_{\alleB( m_{i}-1 )} \, \block{ m_{i} } $. This contains the remaining subwords of length $ \length{ \block{ m_{i} }} - \length{ \block{ m_{i} - 1 }} $ of $\block{ m_{i} } a_{\alleB( m_{i}-1 )} \block{ m_{i} }$.
\end{proof}

\begin{prop}
\label{prop:M+1Leq}
The inequality
\[ \repe( \length{ \block{ m_{i} }}+1 ) \leq 2 \cdot ( \length{ \block{ \alleB(m_{i})-1 }} + 1 )+ \length{ \block{ m_{i}-1 }} \]
holds for all $i \geq 1$.
\end{prop}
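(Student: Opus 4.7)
My plan is to set $W := \alleB(m_{i})$ and $p := \length{\block{W-1}}+1$, so the inequality to prove becomes $\repe(\length{\block{m_{i}}}+1) \leq 2p + \length{\block{m_{i}-1}}$, and to show that an arbitrary $u \in \langu{\subshift}$ of length $2p + \length{\block{m_{i}-1}}$ contains every length-$(\length{\block{m_{i}}}+1)$ word of $\langu{\subshift}$. Proposition~\ref{prop:an=aKn} gives $a_{W} = a_{m_{i}}$, and the definition of $\alleB$ yields $\alphab_{m_{i}+1} = \{a_{m_{i}+1}, \ldots, a_{W}\}$. By Proposition~\ref{prop:enthalten}, the target words are exactly the length-$(\length{\block{m_{i}}}+1)$ factors of $\block{m_{i}}\,a\,\block{m_{i}}$ as $a$ ranges over $\alphab_{m_{i}+1}$, so I would split according to whether $a = a_{W} = a_{m_{i}}$ or not.

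For $a \in \{a_{m_{i}+1}, \ldots, a_{W-1}\}$, the hierarchical decomposition of $\block{W-1}$ into $\block{m_{i}}$-blocks with separators from $\{a_{m_{i}+1},\ldots,a_{W-1}\}$ makes $\block{m_{i}}\,a\,\block{m_{i}}$ a factor of $\block{W-1}$, so I only need to exhibit one full $\block{W-1}$ inside $u$. As $u$ is a window of the periodic $\block{W-1}$-decomposition $\ldots\block{W-1}\star\block{W-1}\star\ldots$ of period $p$, and $|u|=2p+\length{\block{m_{i}-1}} > 2p-1$, this is automatic.

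For the remaining case $a = a_{W} = a_{m_{i}}$, I would use that the distinct length-$(\length{\block{m_{i}}}+1)$ subwords of $\block{m_{i}}\,a_{m_{i}}\,\block{m_{i}}$ are parameterised by an offset $j \in \{1, \ldots, p_{m_{i}-1}\}$ within the all-$a_{m_{i}}$-separator $\block{m_{i}-1}$-period (writing $p_{m_{i}-1} := \length{\block{m_{i}-1}}+1$), and that each one is a factor of both $\block{m_{i}-1}\,a_{m_{i}}\,\block{m_{i}}$ and $\block{m_{i}}\,a_{m_{i}}\,\block{m_{i}-1}$. At the $\block{W-1}$-level $u$ meets either two or three separators from $\alphab_{W}$, at spacing $p$; non-$a_{W}$ separators come from the $\block{W}$-level decomposition and are spaced $n_{W} p \geq 2p$ apart, so in particular two consecutive separators of $u$ cannot both be non-$a_{W}$. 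Write $q_{1} \in \{1,\ldots,p\}$ for the position of the leftmost separator $\sigma_{1}$; the next one $\sigma_{2}$ is at $q_{1} + p$, and, when $q_{1} \leq \length{\block{m_{i}-1}}$, a third $\sigma_{3}$ sits at $q_{1} + 2p$.

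If the middle separator $\sigma_{2}$ equals $a_{W}$, it is flanked by a full $\block{W-1}$ on the left and by at least $\length{\block{m_{i}-1}}$ letters (the prefix $\block{m_{i}-1}$ of the next $\block{W-1}$) on the right, so $u$ contains $\block{m_{i}}\,a_{W}\,\block{m_{i}-1}$ and every offset is realised. The easier two-separator branch with $\sigma_{2} \neq a_{W}$ forces $q_{1} \geq p_{m_{i}-1}$ and $\sigma_{1} = a_{W}$, and then $\sigma_{1}$ already has $\block{m_{i}-1}$ (suffix of the preceding $\block{W-1}$) on the left and $\block{m_{i}}$ (prefix of the following $\block{W-1}$) on the right, producing $\block{m_{i}-1}\,a_{W}\,\block{m_{i}}$ in $u$. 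The hard part will be the three-separator case with $\sigma_{2} \neq a_{W}$, which forces $\sigma_{1} = \sigma_{3} = a_{W}$ by the spacing constraint, yet leaves $\sigma_{1}$ with only $q_{1} - 1 < \length{\block{m_{i}-1}}$ letters to its left and $\sigma_{3}$ with only $\length{\block{m_{i}-1}} - q_{1} < \length{\block{m_{i}-1}}$ letters to its right, so neither $a_{W}$-separator on its own yields a pattern of the required length. The resolution is to combine them: around $\sigma_{1}$ the word $u$ displays $\block{m_{i}-1}|_{[p_{m_{i}-1} - q_{1} + 1,\,\length{\block{m_{i}-1}}]}\,a_{W}\,\block{m_{i}}$, which realises the offsets $p_{m_{i}-1} - q_{1} + 1,\ldots,p_{m_{i}-1}$, while around $\sigma_{3}$ one reads $\block{m_{i}}\,a_{W}\,\block{m_{i}-1}|_{[1,\,\length{\block{m_{i}-1}} - q_{1}]}$, realising the offsets $1,\ldots,p_{m_{i}-1} - q_{1}$. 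These two ranges are complementary, so together all $p_{m_{i}-1}$ offsets appear in $u$.
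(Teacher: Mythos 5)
Your proposal is correct and follows essentially the same route as the paper's proof: locate a full $\block{\alleB(m_{i})-1}$-block inside the given window to dispose of all $\block{m_{i}}\,a\,\block{m_{i}}$ with $a \neq a_{m_{i}}$, observe that among consecutive separators at that block level at least one equals $a_{m_{i}}$, and in the remaining hard case stitch together a suffix of $\block{m_{i}-1}\,a_{m_{i}}\,\block{m_{i}}$ read around one $a_{m_{i}}$-separator with a complementary prefix of $\block{m_{i}}\,a_{m_{i}}\,\block{m_{i}-1}$ read around another. With $j = \length{\block{m_{i}-1}} - q_{1}$ your two combined windows coincide exactly with the two word-ends used in the paper, so the arguments match step for step.
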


\begin{proof}
The proof is similar to the proof of Proposition~\ref{prop:RepeM-M-1Leq}. We write $\widetilde{m} = \alleB(m_{i})-1$ and show that every word of length $2 \cdot ( \length{ \block{ \widetilde{m} }} + 1 )+ \length{ \block{ m_{i}-1 }}$ contains all subwords of length $ \length{ \block{ m_{i} }} + 1 $ of all words of the form $ \block{ m_{i} } a \block{ m_{i} } $ with $ a \in \alphab_{m_{i}+1}$. First note that every word of length $2 \cdot ( \length{ \block{ \widetilde{m} }} + 1 )+ \length{ \block{ m_{i}-1 }}$ contains at least once the word $ \block{ \widetilde{m} } $. Because of the decomposition
\[ \block{ \widetilde{m} } = \block{ m_{i}  } a_{m_{i}+1} \hdots a_{m_{i}+1} \block{ m_{i} } a_{m_{i}+2} \block{ m_{i}  } a_{m_{i}+1} \hdots a_{m_{i}+1} \block{ m_{i} } \hdots a_{\widetilde{m}} \hdots \]
all words $ \block{ m_{i} } a \block{ m_{i} } $ with $ a \in \{ a_{m_{i}+1}, \hdots , a_{\alleB(m_{i})-1} \} = \alphab_{m_{i}+1} \setminus \{ a_{\alleB( m_{i} ) } \} $ are contained in $ \block{ \widetilde{m} } $. Hence only the case $ a = a_{\alleB( m_{i} ) } = a_{m_{i}} $ remains. To deal with this case, note that a word of length $2 \cdot ( \length{ \block{ \widetilde{m} }} + 1 )+ \length{ \block{ m_{i}-1 }}$ actually contains a complete $ \block{ \widetilde{m} } $-block together with both neighbouring letters. At least one of these letters is $a_{\widetilde{m}+1} = a_{m_{i}}$ and we will assume that it is the right one (the other case can be treated similarly).

Recall that it suffices to consider those subwords of length $\length{ \block{ m_{i} }}+1$ of $ \block{ m_{i} } a_{m_{i}} \block{ m_{i} }$ that start in $ \block{ m_{i}-1 } a_{m_{i}}$. If there are at least $\length{ \block{ m_{i} - 1 }}$ letters right of $a_{m_{i}}$, then these subwords are all contained in our word. Now assume that there are only $0 \leq j < \length{ \block{ m_{i} - 1 }}$ letters right of $a_{m_{i}}$. Then there are $ \length{ \block{ m_{i}-1 }} - 1 - j +1+ \length{ \block{ \tilde{m} }} + 1+ \length{ \block{ \widetilde{m} }} $ letters left of $a_{m_{i}}$, that is, a $\block{ \widetilde{m} }$-block, a single letter, another $\block{ \widetilde{m} }$-block, another single letter and the rightmost $\length{ \block{ m_{i}-1 }} - 1 - j $ letters of $\block{ m_{i}-1 }$. Again, at least one of the single letters is $a_{m_{i}}$. If it is the one in the middle, then our word contains $ \block{ m_{i} } a_{m_{i}} \block{ m_{i} } $ and we are done. Otherwise, note that the right end of our word is $ \block{ m_{i} } a_{m_{i}} \block{ m_{i}-1 }|_{[1, j]} $. This contains all subwords of length $\length{ \block{ m_{i} }}+1$ of $\block{ m_{i} } a_{m_{i}} \block{ m_{i} }$ that start in $ \block{ m_{i}-1 }|_{[1, 1+j]} $. The left end of our word is $ \block{ m_{i}-1 }|_{[2+j, \length{ \block{ m_{i}-1 }}]} a_{m_{i}} \block{ m_{i} } $. This contains the remaining subwords of length $\length{ \block{ m_{i} }}+1$ of $\block{ m_{i} } a_{m_{i}} \block{ m_{i} }$.
\end{proof}

\begin{rem}
Similar to what we observed for the lower bounds, the Propositions~\ref{prop:RepeM-M-1Leq} and \ref{prop:M+1Leq} refer to the same length in the case of $ m_{i+1} = m_{i}+1 $ and $ n_{m_{i+1}} = 2 $, since in that case
\[  \length{ \block{ m_{i+1} }} - \length{ \block{ m_{i+1}-1 }} = \length{ \block{ m_{i+1}-1 }} + 1  = \length{ \block{ m_{i} }}+1 \] 
holds. The upper bound given by Proposition~\ref{prop:M+1Leq} is stronger (that is, lower), as a direct computation shows:
\begin{align*}
2 \cdot ( \length{ \block{ \alleB(m_{i})-1 }} + 1 )+ \length{ \block{ m_{i}-1 }} 
&< 2 \cdot ( \length{ \block{ \alleB(m_{i})-1 }} + 1 )+ \length{ \block{ m_{i} }} \\
& = 2 \cdot ( \length{ \block{ \alleB(m_{i+1}-1)-1 }} + 1 )+ \length{ \block{ m_{i+1} }} - \length{ \block{ m_{i+1}-1 }} - 1 \, .
\end{align*}
\end{rem}

\begin{thm}
\label{thm:RepeFunct}
For $i \geq 1$ and $ \length{ \block{ m_{i} }} - \length{ \block{ m_{i}-1 }} + 1 \leq L \leq \length{ \block{ m_{i+1} }} - \length{ \block{ m_{i+1}-1 }} $, the repetitivity function is given by
\begin{align*} \repe(L)  = &\begin{cases}
2  \length{ \block{ \alleB( m_{i} ) -1 }} + 1 - \length{ \block{ m_{i} }} + \length{ \block{ m_{i}-1 }} +  L \\
 2  \length{ \block{ \alleB( m_{i} )-1 }} + 1 + L
\end{cases} \\
&\begin{cases}
\text{for } \length{ \block{ m_{i} }} - \length{ \block{ m_{i}-1 }} + 1 \leq L \leq \length{ \block{ m_{i} }} + 1 \\
\text{for } \length{ \block{ m_{i} }} + 2 \leq L \leq \length{ \block{ m_{i+1} }} - \length{ \block{ m_{i+1}-1 }} 
\end{cases} \hspace{-0.5 em} .
\end{align*}
\end{thm}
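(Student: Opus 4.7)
The strategy is to combine the lower bounds from Propositions~\ref{prop:M-M-1Geq} and \ref{prop:RepeM+2Geq} with the upper bounds from Propositions~\ref{prop:RepeM-M-1Leq} and \ref{prop:M+1Leq} to determine $\repe$ exactly at the four endpoints $L_{1} := \length{\block{m_{i}}} - \length{\block{m_{i}-1}}+1$, $L_{2} := \length{\block{m_{i}}}+1$, $L_{3} := \length{\block{m_{i}}}+2$ and $L_{4} := \length{\block{m_{i+1}}} - \length{\block{m_{i+1}-1}}$. For the upper bound at $L_{4}$, I would apply Proposition~\ref{prop:RepeM-M-1Leq} at the index $i+1$, using that $\alleB$ is constant on $\{ m_{i}, \hdots , m_{i+1}-1 \}$ and hence $\alleB(m_{i+1}-1) = \alleB(m_{i})$. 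A direct arithmetic check then confirms that at each $L_{j}$ the matching lower and upper bounds coincide with the value predicted by the claimed formula.

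The rest of the proof is an application of strict monotonicity, $\repe(L+1) \geq \repe(L)+1$. Iterating this along $[L_{1},L_{2}]$ yields $\repe(L_{2}) - \repe(L_{1}) \geq L_{2} - L_{1}$, and since the matching endpoint values show that this inequality is in fact an equality, every individual increment $\repe(L+1) - \repe(L)$ must equal $1$ on this interval. Hence $\repe(L) = \repe(L_{1}) + (L - L_{1})$ for $L_{1} \leq L \leq L_{2}$, which reduces to the first case of the formula after substituting the value at $L_{1}$. The identical telescoping argument on $[L_{3}, L_{4}]$ yields the second case.

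The main obstacle, and the reason for splitting the range at $L_{2}/L_{3}$, is the jump between $\repe(L_{2})$ and $\repe(L_{3})$: the claimed formula jumps by $\length{\block{m_{i}}} + 1 - \length{\block{m_{i}-1}}$, which is in general much larger than $1$. This discontinuity reflects the combinatorial content of Proposition~\ref{prop:RepeM+2Geq}, namely that the pattern $a_{\alleB(m_{i})} \, \block{m_{i}} \, a_{m_{i}+1}$ never appears inside a single $\block{\alleB(m_{i})-1}$-block, so once $L$ crosses the threshold $\length{\block{m_{i}}} + 2$ one is forced to examine a substantially longer window. Beyond isolating this threshold, the remainder of the argument is essentially bookkeeping and the matching of constants verified above.
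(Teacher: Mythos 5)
Your proposal is correct and follows essentially the same route as the paper: lower bounds at the left endpoints (Propositions~\ref{prop:M-M-1Geq} and \ref{prop:RepeM+2Geq}), upper bounds at the right endpoints (Propositions~\ref{prop:M+1Leq} and \ref{prop:RepeM-M-1Leq}, the latter applied at index $i+1$ with $\alleB(m_{i+1}-1)=\alleB(m_{i})$), and a telescoping squeeze via $\repe(L+1)\geq\repe(L)+1$ that forces every increment to equal $1$. The only cosmetic inaccuracy is the claim that matching lower and upper bounds are available \emph{at each} of the four endpoints --- at $L_{1}$ and $L_{3}$ only lower bounds are directly available and at $L_{2}$ and $L_{4}$ only upper bounds, so the exact endpoint values emerge from the squeeze rather than pointwise; your second paragraph already carries out the squeeze correctly, and the degenerate case $m_{i+1}=m_{i}+1$, $n_{m_{i+1}}=2$ (empty second interval) is handled implicitly.
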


\begin{proof}
First we show that the lower bound in Proposition~\ref{prop:M-M-1Geq} and the upper bound in Proposition~\ref{prop:M+1Leq} are actually equalities and that the repetitivity function increases by exactly one in between. We use that the repetitivity function is strictly increasing and its growth is therefore always at least one:
\begin{align*}
& \; 2 \cdot ( \length{ \block{ \alleB(m_{i})-1 }} + 1 ) + \length{ \block{ m_{i}-1 }} \\
& \geq \repe( \length{ \block{ m_{i} }}+1 ) \qquad \text{by Proposition~\ref{prop:M+1Leq}} \\
& = \repe(\length{ \block{ m_{i} }} - \length{ \block{ m_{i}-1 }}+1) + \sum_{L = \length{ \block{ m_{i} }} - \length{ \block{ m_{i}-1 }}+1 }^{ \length{ \block{ m_{i} }} } \big[ \repe(L+1) - \repe(L) \big] \\
& \geq 2 \cdot ( \length{ \block{ \alleB( m_{i} ) -1 }} + 1 ) + 1 \cdot \length{ \block{ m_{i}-1 }} \qquad \text{by Proposition~\ref{prop:M-M-1Geq}} .
\end{align*}
This yields 
\[ \repe(L) = 2 \cdot ( \length{ \block{ \alleB( m_{i} ) -1 }} + 1 ) + L - \big( \length{ \block{ m_{i} }} - \length{ \block{ m_{i}-1 }} +1 \big) \]
for $L$ from $ \length{ \block{ m_{i} }} - \length{ \block{ m_{i}-1 }} + 1 $ to $ \length{ \block{ m_{i} }} + 1 $. If $ m_{i+1} = m_{i}+1 $ and $n_{m_{i+1}} = 2$ hold, then $ \length{ \block{ m_{i} }} + 1 = \length{ \block{ m_{i+1}-1 }} + 1 = \length{ \block{ m_{i+1} }} - \length{ \block{ m_{i+1}-1 }} $ follows and we are done. If either $ m_{i+1} > m_{i}+1 $ or $ n_{m_{i+1}} > 2 $ holds, then $ \length{ \block{ m_{i} }} + 1 < \length{ \block{ m_{i+1} }} - \length{ \block{ m_{i+1}-1 }} $ follows and we have yet to consider the lengths $L$ from $ \length{ \block{m_{i} }} + 2 $ to $ \length{ \block{ m_{i+1} }} - \length{ \block{ m_{i+1}-1 }} $. For this we show that the inequalities in the Propositions~\ref{prop:RepeM+2Geq} and \ref{prop:RepeM-M-1Leq} are actually equalities and that the repetitivity function increases by exactly one in between:
\begin{align*}
& \; 2 \cdot ( \length{ \block{ \alleB(m_{i+1}-1) - 1 }} + 1 ) + \length{ \block{ m_{i+1} }} - \length{ \block{ m_{i+1} - 1 }} - 1 \\
& \geq \repe( \length{ \block{ m_{i+1} }} - \length{ \block{ m_{i+1} - 1 }} ) \qquad \text{by Proposition~\ref{prop:RepeM-M-1Leq}} \\
& = \repe( \length{ \block{ m_{i} }}+2 ) + \sum_{L = \length{ \block{ m_{i} }}+2 }^{ \length{ \block{ m_{i+1} }} - \length{ \block{ m_{i+1} - 1 }} - 1 } \big[ \repe(L+1) - \repe(L) \big] \\
& \geq 2 \cdot ( \length{ \block{ \alleB( m_{i} )-1 }} + 1 )+ \length{ \block{ m_{i} }} + 1 + \length{ \block{ m_{i+1} }} - \length{ \block{ m_{i+1} - 1 }} - \length{ \block{ m_{i} }} - 2 \\
& \hspace{1.4em} \text{by Proposition~\ref{prop:RepeM+2Geq}} \\
& = 2 \cdot ( \length{ \block{ \alleB( m_{i} )-1 }} + 1 )+ \length{ \block{ m_{i+1} }} - \length{ \block{ m_{i+1} - 1 }} - 1 \\
& = 2 \cdot ( \length{ \block{ \alleB( m_{i+1}-1 )-1 }} + 1 )+ \length{ \block{ m_{i+1} }} - \length{ \block{ m_{i+1} - 1 }} - 1 \qedhere
\end{align*}
\end{proof}

\begin{rem}
For $ m_{i+1} = m_{i}+1 $ and $ n_{m_{i+1}} = 2 $, the repetitivity function simplifies to
\[ \repe(L) = 2 \cdot \length{ \block{ \alleB( m_{i} ) -1 }} + 1 - \length{ \block{ m_{i} }} + \length{ \block{ m_{i}-1 }} +  L \]
for $ \length{ \block{ m_{i} }} - \length{ \block{ m_{i}-1 }} + 1 \leq L \leq \length{ \block{ m_{i}+1 }} - \length{ \block{ m_{i} }} $ .
\end{rem}

\begin{rem}
Roughly speaking, the jump between $ \repe( \length{ \block{  m_{i} }}+1 )$ and $ \repe( \length{ \block{  m_{i} }}+2 ) $ is caused by the fact that it is sufficient to consider the subwords of $ \block{ m_{i} } a \block{ m_{i} } $ with $a \in \alphab_{m_{i}+1}$ when we are interested in all words of length $ \length{ \block{ m_{i} }} + 1$, but this is not true when we wish to deal with words of length $ \length{ \block{ m_{i} }}+2 $. Here we have to consider words of the type $ a_{1} \block{ m_{i} } a_{2} $ for $ a_{1}, a_{2} \in \alphab_{m_{i}+1} $ as well, and to see all possibilities, we have to look at a much longer word. The jump between $\repe( \length{ \block{ m_{i} }} - \length{ \block{ m_{i} - 1 }} ) $ and $ \repe( \length{ \block{  m_{i} }} - \length{ \block{  m_{i}-1 }} + 1 ) $ is caused by a similar reason: All subwords of length $\length{ \block{ m_{i} }} - \length{ \block{ m_{i} - 1 }} $ of $ \block{ m_{i} } a_{m_{i}} \block{ m_{i} } $ are contained in $ \block{ m_{i} }$. This is not true for subwords of length $ \length{ \block{ m_{i} }} - \length{ \block{ m_{i} - 1 }}+1$, where we have to look at a much longer word to see the next occurrence of $a_{m_{i}}$. In the special case of $ n_{m_{i+1}} = 2 $ and $ m_{i+1}= m_{i}+1 $, the positions for the two jumps coincide.
\end{rem}

\subsection{Application: \texorpdfstring{$\alpha$}{Alpha}-Repetitivity}

In this subsection, we will use the above formula for the repetitivity function to investigate linear repetitivity and, more general, $\alpha$-repetitivity of simple Toeplitz subshifts. Since the repetitivity is strictly increasing and clearly $ 1 < \card{\alphab} \leq \repe(1) $ holds, we obtain $1 < \frac{\repe(L)}{L}$ for all $L \geq 1$. If there exists a constant $C$ such that $\frac{\repe( L )}{L} \leq C $ holds for all $L \geq 1$, then the subshift is called linear repetitive. This was generalized in \cite{GKMSS_AperioJarnik}, Definition 2.9, where a subshift is called $\alpha$-repetitive for $ \alpha \geq 1 $, if $ 0 < \limsup_{L \to \infty} \frac{\repe(L)}{L^{\alpha}} < \infty $ holds. In \cite{DKMSS_Regul-Article}, Theorem~4.10, $\alpha$-repetitivity is characterized for $l$-Grigorchuk subshifts. Below we give a characterisation for simple Toeplitz subshifts. For $m_{i} = i$, $ \alleB( m_{i} ) = m_{i} + 3 $ and $ n_{j} = 2^{l_{j}} $, the following Proposition yields precisely the result for $l$-Grigorchuk subshifts from \cite{DKMSS_Regul-Article}.

\begin{prop}
\label{prop:CharAlphaRepeAllg}
Let $\alpha \geq 1$. A simple Toeplitz subshift is $\alpha$-repetitive if and only if the inequalities
\[ 0 < \limsup_{i \to \infty}  \frac{ \length{ \block{ \alleB( m_{i} )-1 }} + 1 }{( \length{ \block{ m_{i} }}+1 )^{\alpha}} = \limsup_{i \to \infty}  \frac{   n_{0} \cdot \hdots \cdot n_{\alleB( m_{i} )-1}  }{  n_{0}^{\alpha} \cdot \hdots \cdot n_{m_{i}}^{\alpha}  }  < \infty \]
hold.
\end{prop}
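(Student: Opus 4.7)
The plan is to invoke the explicit formula from Theorem~\ref{thm:RepeFunct} and carry out an elementary asymptotic analysis. First, I would observe that the intervals
\[ I_i := \bigl[\length{\block{m_i}} - \length{\block{m_i-1}} + 1, \; \length{\block{m_{i+1}}} - \length{\block{m_{i+1}-1}}\bigr] \]
cover all sufficiently large lengths $L$, so that
\[ \limsup_{L \to \infty} \frac{\repe(L)}{L^\alpha} = \limsup_{i \to \infty} \sup_{L \in I_i} \frac{\repe(L)}{L^\alpha}. \]
Next I would rewrite the piecewise formula of Theorem~\ref{thm:RepeFunct} in the unified form
\[ \repe(L) = 2\bigl(\length{\block{\alleB(m_i)-1}}+1\bigr) + L + \varepsilon_i(L), \qquad |\varepsilon_i(L)| \le \length{\block{m_i}} + 1, \]
valid on the whole of $I_i$; the error bound is immediate from the two branches of the theorem.

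Then I would compute the derivative of the auxiliary function $f(L) := (C+L)/L^\alpha$ for a constant $C > 0$:
\[ f'(L) = \frac{(1-\alpha)L - \alpha C}{L^{\alpha+1}}, \]
which is strictly negative for every $\alpha \geq 1$ and $L > 0$. Hence the supremum of $\repe(L)/L^\alpha$ over $I_i$ is, up to the $O(\length{\block{m_i}}+1)$ error, attained at the left endpoint $L_i := \length{\block{m_i}} - \length{\block{m_i-1}}+1$. Using $n_{m_i} \ge 2$ one obtains the two-sided comparison $\tfrac12(\length{\block{m_i}}+1) \le L_i \le \length{\block{m_i}}+1$, which I would record explicitly so that $L_i$ may be replaced by $\length{\block{m_i}}+1$ in the asymptotic analysis.

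Combining these steps, I would conclude
\[ \sup_{L \in I_i} \frac{\repe(L)}{L^\alpha} \asymp \frac{\length{\block{\alleB(m_i)-1}}+1}{(\length{\block{m_i}}+1)^\alpha} + \frac{1}{(\length{\block{m_i}}+1)^{\alpha-1}}, \]
where the second summand is bounded above and tends to $0$ for $\alpha > 1$ (and equals $1$ for $\alpha = 1$). Taking $\limsup_i$ yields the equivalence of the finiteness of the two limsups; the positivity side of $\alpha$-repetitivity is automatic for $\alpha = 1$ (since $\repe(L) > L$) and follows for $\alpha > 1$ from the inequality $\length{\block{\alleB(m_i)-1}}+1 \geq \length{\block{m_i}}+1$ together with $\length{\block{m_i}} \to \infty$. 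The final equality of the two expressions in the statement is then just the identity $\length{\block{k}}+1 = n_0 n_1 \cdots n_k$ from Proposition~\ref{prop:OmegaN}.

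The main, though mild, obstacle will be book-keeping: ensuring that the $O(\length{\block{m_i}}+1)$ error term truly does not affect the limsup once divided by $L_i^\alpha \asymp (\length{\block{m_i}}+1)^\alpha$, and checking that taking the supremum over each $I_i$ really is dominated (up to multiplicative constants) by the value at the left endpoint in every case of the piecewise formula, including the transitional lengths $L = \length{\block{m_i}}+1$ and $L = \length{\block{m_i}}+2$ where the two branches of Theorem~\ref{thm:RepeFunct} meet.
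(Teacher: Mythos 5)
Your proposal is correct and follows essentially the same route as the paper: both invoke Theorem~\ref{thm:RepeFunct}, observe that $\repe(L)/L^{\alpha}$ has the form $\text{const}/L^{\alpha}+L^{1-\alpha}$ and is therefore maximised at the left endpoint of each branch (the paper at $L_{1}(i)=\length{\block{m_i}}-\length{\block{m_i-1}}+1$ and $L_{2}(i)=\length{\block{m_i}}+2$, you via a unified error term of size at most $\length{\block{m_i}}+1$), and then sandwich the supremum over $I_i$ between constant multiples of $(\length{\block{\alleB(m_i)-1}}+1)/(\length{\block{m_i}}+1)^{\alpha}$. The only slight imprecision is your justification of positivity for $\alpha>1$: it does not follow from $\length{\block{\alleB(m_i)-1}}+1\geq\length{\block{m_i}}+1$ alone (that bound only gives a quantity tending to $0$), but it does follow immediately from your two-sided $\asymp$ comparison together with the assumed positivity of the relevant $\limsup$, so the argument stands.
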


\begin{proof}
We use the result from Theorem~\ref{thm:RepeFunct}. The quotient takes the form $ \frac{\repe( L )}{L^{\alpha}} = \frac{\text{const}}{L^{\alpha}} + L^{1-\alpha} $, where the constant depends on whether $ \length{ \block{ m_{i} }} - \length{ \block{ m_{i}-1 }} + 1 \leq L \leq \length{ \block{ m_{i} }} + 1 $ or $ \length{ \block{ m_{i} }} + 2 \leq L \leq \length{ \block{ m_{i+1} }} - \length{ \block{ m_{i+1}-1 }} $ holds and on the value of $i$. For every $i$, the quotient is maximal either at $ L_{1}(i) := \length{ \block{m_{i} }} - \length{ \block{ m_{i}-1 }} + 1 $ or at $ L_{2}(i) :=  \length{ \block{ m_{i} }} + 2 $. At these points, the upper and lower bounds
\[ 2 \cdot \frac{ \length{ \block{ \alleB( m_{i} ) -1 }} + 1 }{ ( \length{ \block{ m_{i} }} + 1)^{\alpha} } < \frac{\repe( L_{1}(i) )}{ L_{1}(i)^{\alpha}} <  \frac{2}{(1 - \frac{ 1  }{  n_{m_{i}}  }) ^{\alpha}} \cdot \frac{ \length{ \block{ \alleB( m_{i} ) -1 }} + 1}{ ( \length{ \block{ m_{i} }} + 1)^{\alpha}  } \leq 2^{1+\alpha} \cdot \frac{ \length{ \block{ \alleB( m_{i} ) -1 }} + 1 }{ ( \length{ \block{ m_{i} }} + 1)^{\alpha}  } \]
and
\[ 2^{1-\alpha} \cdot \frac{ \length{ \block{ \alleB( m_{i} )-1 }} + 1 }{ ( \length{ \block{ m_{i} }} + 1 )^{\alpha} } = \frac{ 2 \cdot ( \length{ \block{ \alleB( m_{i} )-1 }} + 1 ) }{ ( 2 \cdot (\length{ \block{ m_{i} }} + 1) )^{\alpha} } < \frac{ \repe( L_{2}(i) ) }{L_{2}(i)^{\alpha}} < 3 \cdot \frac{ \length{ \block{ \alleB( m_{i} )-1 }} + 1 }{ ( \length{ \block{ m_{i} }} + 1 )^{\alpha} } \]
hold. If we assume that $ 0 < \limsup_{i \to \infty}  \frac{ \length{ \block{ \alleB( m_{i} )-1 }} + 1 }{( \length{ \block{ m_{i} }}+1 )^{\alpha}} < \infty$ holds, then the above bounds yield
\[ 0 < \limsup_{i \to \infty} 2 \cdot \frac{ \length{ \block{ \alleB( m_{i} )-1 }} + 1 }{( \length{ \block{ m_{i} }}+1 )^{\alpha}} \leq \limsup_{L \to \infty} \frac{\repe( L )}{L^{\alpha}} \leq \limsup_{i \to \infty} 2^{1+\alpha} \cdot  \frac{ \length{ \block{ \alleB( m_{i} )-1 }} + 1}{ ( \length{ \block{ m_{i} }} + 1)^{\alpha}  } < \infty \, .\]
and thus, the subshift is $\alpha$-repetitive. Conversely, if we assume that the subshift is $\alpha$-repetitive, that is, $ 0 < \limsup_{L \to \infty} \frac{\repe( L )}{L^{\alpha}} < \infty $ holds, then the above bounds yield
\[  0 < 2^{-1-\alpha} \cdot \limsup_{L \to \infty} \frac{\repe(L)}{L^{\alpha}} < \limsup_{i \to \infty} \frac{ \length{ \block{ \alleB( m_{i} ) -1 }} + 1 }{ ( \length{ \block{ m_{i} }} + 1)^{\alpha} } < 2^{-1+\alpha} \cdot \limsup_{L \to \infty} \frac{\repe(L)}{L^{\alpha}} < \infty \, . \qedhere \]
\end{proof}

\begin{coro}
\label{coro:CharLinRepe}
A simple Toeplitz subshift is linear repetitive if and only if $ \left( \prod_{j = m_{i}+1}^{\alleB(m_{i}) - 1} n_{j} \right)_{i \geq 1} $ is bounded from above.
\end{coro}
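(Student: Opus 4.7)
The plan is to specialise Proposition~\ref{prop:CharAlphaRepeAllg} to the case $\alpha = 1$ and simplify. For $\alpha = 1$, the central quotient becomes
\[ \frac{ n_{0} \cdot \hdots \cdot n_{\alleB( m_{i} )-1} }{ n_{0} \cdot \hdots \cdot n_{m_{i}} } = \prod_{j = m_{i}+1}^{\alleB(m_{i}) - 1} n_{j} , \]
so linear repetitivity is equivalent to this sequence having a finite, positive $\limsup$.

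Next I would argue that the lower bound $0 < \limsup$ is automatic and hence can be dropped. Each factor satisfies $n_{j} \geq 2$, and the product is non-empty: indeed, by the observation made right after the definition of $\alleB$, we have $\alleB(m_{i}) \geq m_{i} + \card{\alphabEv} \geq m_{i} + 2$, so the index range $m_{i}+1 \leq j \leq \alleB(m_{i}) - 1$ contains at least one integer. Therefore each term of the sequence is at least $2$, which renders the strict positivity condition trivially true.

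It remains to identify ``$\limsup < \infty$'' with ``bounded from above''. Any finite initial segment of the sequence is automatically bounded, and a finite $\limsup$ is equivalent to being eventually bounded above; combining these two facts yields boundedness of the whole sequence, and conversely boundedness obviously implies a finite $\limsup$. Putting these three observations together, Proposition~\ref{prop:CharAlphaRepeAllg} with $\alpha = 1$ yields exactly the claim of the corollary. I do not anticipate any real obstacle here; this is a direct translation of the $\alpha$-repetitivity criterion into the linear case.
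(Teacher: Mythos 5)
Your proposal is correct and follows essentially the same route as the paper: specialise Proposition~\ref{prop:CharAlphaRepeAllg} to $\alpha=1$, observe that $n_{j}\geq 2$ together with $\alleB(m_{i})\geq m_{i}+\card{\alphabEv}\geq m_{i}+2$ makes each product at least $2$, so the positivity condition is automatic and only boundedness remains. The paper's proof is exactly this argument, stated slightly more tersely.
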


\begin{proof}
By Proposition~\ref{prop:CharAlphaRepeAllg}, the subshift is linear repetitive ($\alpha = 1$) if and only if
\[ 0 < \limsup_{i \to \infty} \prod_{j = m_{i}+1}^{\alleB(m_{i})-1} n_{j} < \infty \]
holds. Because of $ n_{j} \geq 2 $ for all $ j \geq 0 $, and $ \alleB(k) \geq k + \# \alphab_{k+1} \geq k + 2 $ for all $ k \geq 0 $, the product is bounded from below by 2 for all $ i \geq 0 $.
\end{proof}

\begin{rem}
The product $\prod_{j = m_{i}+1}^{\alleB(m_{i})-1} n_{j} $ gives the length of the period of the word $ (a_{m_{i}+1}^{n_{m_{i}+1}-1} ? )^{\infty} \triangleleft \hdots \triangleleft (a_{\alleB(m_{i})-1}^{n_{\alleB(m_{i})-1}-1} ? )^{\infty} $. Thus a simple Toeplitz subshift is linear repetitive if and only if the sequence of the period lengths of the words $(a_{m_{i}+1}^{n_{m_{i}+1}-1} ? )^{\infty} \triangleleft \hdots \triangleleft (a_{\alleB(m_{i})-1}^{n_{\alleB(m_{i})-1}-1} ? )^{\infty} $ is bounded. 
\end{rem}

\begin{coro}
On the one hand, if the sequence $(n_{k})_{k \geq 0}$ is bounded, then the subshift is linear repetitive if and only if the sequence $( \alleB( m_{i} ) - m_{i})_{i \geq 0}$ is bounded. On the other hand, if the sequence $( \alleB( m_{i} ) - m_{i})_{i \geq 0}$ is bounded, then the subshift is linear repetitive if and only if the sequence $(n_{k})_{k \geq 0}$ is bounded. 
\end{coro}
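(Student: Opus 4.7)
The plan is to apply Corollary~\ref{coro:CharLinRepe} throughout, which characterises linear repetitivity of $\subshift$ by boundedness of the product sequence $Q_{i} := \prod_{j = m_{i}+1}^{\alleB(m_{i})-1} n_{j}$, $i \geq 1$. The core observation is that every factor lies in $[2, \sup_{k} n_{k}]$, so one immediately has the sandwich estimate
\[ 2^{\alleB(m_{i}) - 1 - m_{i}} \leq Q_{i} \leq \bigl( \sup_{k} n_{k} \bigr)^{\alleB(m_{i}) - 1 - m_{i}}, \]
whose upper bound is finite precisely when $(n_{k})$ is bounded. For the first assertion I assume $(n_{k})$ is bounded by some $N$; both sides are then purely exponential in $\alleB(m_{i}) - m_{i}$, and $(Q_{i})$ is bounded if and only if $(\alleB(m_{i}) - m_{i})_{i \geq 0}$ is bounded. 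Combined with Corollary~\ref{coro:CharLinRepe}, this yields the first equivalence.

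For the second assertion I assume $\alleB(m_{i}) - m_{i} \leq M$ for all $i$. The direction ``$(n_{k})$ bounded $\Rightarrow$ linearly repetitive'' is immediate from the sandwich, which gives $Q_{i} \leq N^{M-1}$. For the converse I assume linear repetitivity, so there exists $P$ with $Q_{i} \leq P$ for all $i \geq 1$; since each factor of $Q_{i}$ is at least one, this forces $n_{k} \leq P$ for every $k$ lying in at least one interval $[m_{i}+1, \alleB(m_{i}) - 1]$ with $i \geq 1$.

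The hard part will be verifying that these intervals collectively cover every sufficiently large index $k$, so that no $n_{k}$ escapes the bound. Given $k \geq m_{1} + 1$, I would pick $i \geq 1$ with $m_{i} < k \leq m_{i+1}$ and use Proposition~\ref{prop:MUndK} to conclude $m_{i+1} \leq \alleB(m_{i})$. Equality here would force $\alphab_{m_{i}+1} = \{a_{\alleB(m_{i})}\}$, hence $\card{\alphab_{m_{i}+1}} = 1$, contradicting the standing assumption $\card{\alphabEv} \geq 2$. Therefore $m_{i+1} \leq \alleB(m_{i}) - 1$, and $k$ lies in $[m_{i}+1, \alleB(m_{i}) - 1]$ as required. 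The finitely many remaining indices $k \leq m_{1}$ contribute only finitely many values of $n_{k}$, so boundedness of the entire sequence follows, completing the plan.
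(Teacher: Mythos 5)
Your proof is correct and follows the route the paper intends: the corollary is stated there without proof as an immediate consequence of Corollary~\ref{coro:CharLinRepe}, via exactly the sandwich estimate $2^{\alleB(m_{i})-1-m_{i}} \leq \prod_{j=m_{i}+1}^{\alleB(m_{i})-1} n_{j} \leq (\sup_{k} n_{k})^{\alleB(m_{i})-1-m_{i}}$ that you use. The one step that is not completely immediate --- that the intervals $[m_{i}+1, \alleB(m_{i})-1]$ cover every sufficiently large index $k$, which is needed for the direction ``linearly repetitive and $(\alleB(m_{i})-m_{i})$ bounded implies $(n_{k})$ bounded'' --- is handled correctly by your appeal to Proposition~\ref{prop:MUndK} together with $\card{\alphabEv} \geq 2$.
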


\begin{rem}
The difference $ \alleB(k) - k $ describes how many positions in the coding sequence, starting in $a_{k+1}$, we have to look at to see all letters that can occur from this point on. Taking the difference $ \alleB(k) - k $ at the points $ k = m_{i} $ ensures that we get the largest possible distances, since $(m_{i})$ denotes the positions where the value of $\alleB$ increases.
\end{rem}
 
\begin{coro}
\label{coro:CharAlphaRepeConstL}
If $ (n_{k})_{k \geq 0} $ is a constant sequence, then the subshift is $\alpha$-repetitive if and only if $ -\infty < \limsup_{i \to \infty}\big[ \alleB(m_{i}) - \alpha \cdot m_{i} \big] < \infty $ holds.
\end{coro}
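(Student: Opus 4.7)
The plan is to derive this corollary directly from Proposition~\ref{prop:CharAlphaRepeAllg} by specializing to the constant-$(n_k)$ case and taking logarithms. Write $n_k = n$ for all $k \geq 0$. Then iterating the recursion $\length{\block{k}}+1 = n_k \cdot (\length{\block{k-1}}+1)$ from Subsection~\ref{subsec:SimpleTWords} gives $\length{\block{k}}+1 = n^{k+1}$, so the quotient appearing in Proposition~\ref{prop:CharAlphaRepeAllg} becomes
\[ \frac{\length{\block{\alleB(m_i)-1}}+1}{(\length{\block{m_i}}+1)^\alpha} = \frac{n^{\alleB(m_i)}}{n^{\alpha(m_i+1)}} = n^{\alleB(m_i) - \alpha m_i - \alpha}. \]

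Now the subshift is $\alpha$-repetitive iff the limit superior of this quantity is strictly positive and finite. Since $n \geq 2$, the map $x \mapsto n^x$ is a continuous, strictly increasing bijection from $\mathbb{R}$ onto $(0,\infty)$ that sends $-\infty$ to $0$ and $+\infty$ to $+\infty$. Hence
\[ 0 < \limsup_{i \to \infty} n^{\alleB(m_i) - \alpha m_i - \alpha} < \infty \quad \Longleftrightarrow \quad -\infty < \limsup_{i \to \infty} \bigl[ \alleB(m_i) - \alpha m_i - \alpha \bigr] < \infty. \]
The additive constant $-\alpha$ does not affect boundedness of the limit superior, so the right-hand side is equivalent to $-\infty < \limsup_{i \to \infty}[\alleB(m_i) - \alpha m_i] < \infty$, which is the desired characterisation.

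There is no real obstacle here; the only point requiring a brief justification is the passage from the multiplicative condition to the additive one via $\log_n$, which is immediate from monotonicity of the exponential. The proof is thus essentially a one-line computation combined with an invocation of Proposition~\ref{prop:CharAlphaRepeAllg}.
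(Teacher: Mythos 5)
Your proof is correct and follows essentially the same route as the paper: both specialise Proposition~\ref{prop:CharAlphaRepeAllg} to the constant sequence, rewrite the quotient as $n^{\alleB(m_i)-\alpha(m_i+1)}$, and pass to the exponent using monotonicity of $x\mapsto n^x$, discarding the additive constant $-\alpha$. No gaps.
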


\begin{proof}
Let $ n $ be the constant value of the sequence $ (n_{k})_{k} $. By Proposition~\ref{prop:CharAlphaRepeAllg}, a subshift is $\alpha$-repetitive if and only if the following holds:
\begin{align*}
& \; 0 < \limsup_{i \to \infty} \frac{  n_{0} \cdot \hdots \cdot n_{\alleB( m_{i} )-1}  }{   n_{0}^{\alpha} \cdot \hdots \cdot n_{m_{i}}^{\alpha}  } < \infty \\
\Longleftrightarrow \quad & \; 0 < \limsup_{i \to \infty} \, n^{\alleB( m_{i} ) - \alpha \cdot (m_{i} + 1)} < \infty \\
\Longleftrightarrow \quad & \; - \infty  < \limsup_{i \to \infty} \big[ \alleB( m_{i} ) - \alpha \cdot (m_{i} + 1) \big] < \infty \, .\qedhere
\end{align*}
\end{proof}

\begin{exmpl}
For the Grigorchuk subshift, $ (n_{k})_{k} $ is the constant sequence with value 2. Moreover $m_{i} = i$ and $\alleB(k) = k+3$ hold for all $i, k \geq 0$. Hence the Grigorchuk subshift is $\alpha$-repetitive if and only if $-\infty < \limsup_{i \to \infty}\big[ (1- \alpha) i + 3 \big] < \infty $ holds. Thus, it is $1$-repetitive, that is, linear repetitive.
\end{exmpl}

\begin{coro}
Let $(\alleB(m_{i}) - m_{i})_{i \geq 0} $ be a constant sequence with value $c$. If $ n_{j + c - 1}  = n_{j}^{\alpha} $ holds for all $j \geq 0$, then the subshift is $\alpha$-repetitive. In particular, the subshift is $\alpha$-repetitive if $ \alleB(m_{i}) - m_{i} = c $ is constant and $ n_{j+1} = n_{j}^{\sqrt[c-1]{\alpha}} $ holds for all $j \geq 0$.
\end{coro}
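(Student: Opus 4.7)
The plan is to apply Proposition~\ref{prop:CharAlphaRepeAllg}, which reduces $\alpha$-repetitivity to checking
\[ 0 < \limsup_{i \to \infty}  \frac{   n_{0} \cdot \hdots \cdot n_{\alleB( m_{i} )-1}  }{  n_{0}^{\alpha} \cdot \hdots \cdot n_{m_{i}}^{\alpha}  }  < \infty \, . \]
Since $\alleB(m_i) - m_i = c$ by assumption, the numerator equals $\prod_{j=0}^{m_i + c - 1} n_j$, while the denominator is $\prod_{j=0}^{m_i} n_j^\alpha$. The hypothesis $n_{j+c-1} = n_j^\alpha$ (valid for all $j \geq 0$) lets us rewrite each factor of the denominator: $n_j^\alpha = n_{j + c - 1}$. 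Hence
\[ \prod_{j=0}^{m_i} n_j^\alpha = \prod_{j=0}^{m_i} n_{j + c - 1} = \prod_{k = c - 1}^{m_i + c - 1} n_k \, , \]
and the quotient simplifies to the telescoping expression $\prod_{j=0}^{c-2} n_j$, which is a positive real number independent of $i$. Thus the $\limsup$ equals this constant, which is strictly positive and finite, and the first claim follows.

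For the ``in particular'' statement, set $\beta := \sqrt[c-1]{\alpha}$; note that $c \geq 2$ holds since $\alleB(k) \geq k + \card{\alphab_{k+1}} \geq k + 2$, so this root is well-defined. The assumption $n_{j+1} = n_j^{\beta}$ gives by straightforward induction
\[ n_{j + c - 1} = n_{j + c - 2}^{\beta} = n_{j + c - 3}^{\beta^{2}} = \hdots = n_{j}^{\beta^{c-1}} = n_{j}^{\alpha} \, , \]
so the hypothesis of the first part is satisfied and the conclusion follows.

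The argument is essentially a bookkeeping calculation, so I don't anticipate a serious obstacle; the only point requiring minor care is that the hypothesis $n_{j+c-1} = n_j^\alpha$ is used as an equality of real numbers (it is of course usually a constraint on $\alpha$ given the integer sequence $(n_k)$), and that the indexing of products telescopes cleanly, which it does because the shift by $c-1$ matches the length of the missing initial block $n_0, \ldots, n_{c-2}$.
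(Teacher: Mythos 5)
Your proof is correct and follows essentially the same route as the paper: both apply Proposition~\ref{prop:CharAlphaRepeAllg} and use the substitution $n_{j+c-1}=n_{j}^{\alpha}$ to telescope the quotient down to the constant $\prod_{j=0}^{c-2} n_{j}$, with the observation $c \geq 2$ from $\alleB(k) \geq k+2$. Your explicit induction for the ``in particular'' part is a detail the paper leaves implicit, but it is the obvious intended argument.
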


\begin{proof}
Since $\alleB(k) \geq k+2 $ holds for all $k \geq 0$, we have $c \geq 2$. The product
\[ \frac{  n_{0} \cdot \hdots \cdot n_{\alleB( m_{i} )-1}  }{  n_{0}^{\alpha} \cdot \hdots \cdot n_{m_{i}}^{\alpha} } = \frac{ \prod_{j = 0}^{ c - 2} n_{j} \cdot \prod_{j = c-1}^{m_{i}+c-1} n_{j} }{  \prod_{j = 0}^{m_{i}} n_{j}^{\alpha} } = \frac{  \prod_{j = 0}^{ c - 2} n_{j} \cdot \prod_{j = 0}^{m_{i}} n_{j+c-1} }{ \prod_{j = 0}^{m_{i}} n_{j}^{\alpha} } = \prod_{j = 0}^{ c - 2} n_{j} \]
is positive, finite and independent of $i$. Now Proposition~\ref{prop:CharAlphaRepeAllg} yields the claim.
\end{proof}


\section{The Boshernitzan Condition and Jacobi Cocycles}
\label{sec:BoshJacobi}

In this section the Boshernitzan condition is discussed. It can be thought of as a weaker analogue of linear repetitivity and was characterized for simple Toeplitz subshifts in \cite{LiuQu_Simple}. Based on another result from \cite{LiuQu_Simple}, we give a different characterization, which describes the Boshernitzan condition in terms of the function $\alleB$ and the sequence of period lengths $(n_{k})_{k}$. As corollaries, we obtain particular simple descriptions of the Boshernitzan condition for generalized Grigorchuk subshifts and, more general, for simple Toeplitz subshifts with either $n_{k}=2^{j_{k}}$ or $\card{ \alphabEv } = 3$. 

As an application, a result from \cite{BeckPogo_SpectrJacobi} shows that the Boshernitzan condition implies Cantor spectrum of Lebesgue measure zero for Jacobi operators on this subclass of subshifts. This is briefly discussed in the second subsection. It serves mostly as a reminder about the definition of these operators and their associated cocycles. The implications of the characterization in Subsection~\ref{subsec:BoshCond} are made explicit. Moreover we recall a result from \cite{LiuQu_Simple} for the special case of Schrödinger operators on simple Toeplitz subshifts, where the spectrum is always a Cantor set of Lebesgue measure zero, independent of the Boshernitzan condition.

\subsection{The Boshernitzan-Condition}
\label{subsec:BoshCond}

Recall from Section~\ref{subsec:SimpleTWords} that $ \cylin{u}{j} $ denotes the cylinder set of all elements in which a finite word $u$ occurs at position $j$. The set of all finite subwords of a subshift $\subshift$ is denoted by $\langu{ \subshift }$. As we have seen in Section~\ref{subsec:SimToepSub}, simple Toeplitz subshifts are uniquely ergodic due to their regularity. Let now $\nu$ denote the unique $\Shift$-invariant ergodic probability measure on $\subshift$ and define
\[ \eta( L ) := \min \{ \nu( \cylin{u}{1} ) \, : u \in \langu{ \subshift } \, ,\; \length{u} = L \}\, . \]
A subshift is said to satisfy the Boshernitzan condition, if
\begin{equation*}
\limsup_{L \to \infty} L \cdot \eta( L )  > 0 \tag{B}
\end{equation*}
holds. In \cite{LiuQu_Simple}, a number of results related to the Boshernitzan condition are proven: It is shown that every simple Toeplitz subshift satisfies (B) if $ \card{ \alphabEv } = 2 $ holds (Proposition~4.1). Moreover, for $ \card{ \alphabEv } \geq 3 $ a description of $\eta( L )$ is given (Proposition 4.2), which is used to characterize (B) for simple Toeplitz subshifts (Corollary~4.1).

Here, a different characterization of (B) in terms of the function $\alleB$ and the period lengths $(n_{k})_{k}$ is provided. Since the proof is based on the mentioned description of $\eta( L )$ from \cite{LiuQu_Simple}, it is stated below in our notation. As in Section~\ref{sec:Repe}, let $\alleB( k ) = \min \{ j > k: \{ a_{k+1}, \hdots , a_{j} \} = \alphab_{k+1} \}$ and recall that $ \eventNr $ denotes a number such that $ a_{k} \in \alphabEv $ and $ \alphab_{k} = \alphabEv $ hold for all $ k \geq \eventNr $. Moreover, we define $ s_{j} := n_{0} \cdot \hdots \cdot n_{j-1} $.

\begin{prop}[\cite{LiuQu_Simple}]
\label{prop:Eta}
For a simple Toeplitz subshift with $ \card{ \alphabEv } \geq 3 $, there exist constants $ 0 < c_{1} \leq c_{2} $ such that for every $ L >  s_{\eventNr} $ and $j$ defined by the property $ s_{j-1} < L \leq s_{j} $, the following holds:

If $ s_{j-1} < L < 2s_{j-1} $, then $ c_{1} \cdot \eta( L ) \leq \min \Big\{ \frac{ \Big\lceil \frac{ 2s_{ j-1 } - L }{ s_{ j-2 } } \Big\rceil }{ s_{ \min \{ i > j-1 \, : \, a_{i} = a_{j-2} \} } } \; , \; \frac{1}{ s_{ \alleB( j-3 ) } } \Big\} \leq c_{2} \cdot \eta( L ) $ .

If $ 2s_{j-1} \leq L \leq s_{j} $, then $ c_{1} \cdot \eta( L ) \leq \frac{1}{ s_{\alleB( j-2 )} }  \leq c_{2} \cdot \eta( L ) $ .
\end{prop}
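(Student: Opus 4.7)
The starting point is the unique ergodicity of $\subshift$ established in Subsection~\ref{subsec:SimToepSub}: for every $u \in \langu{\subshift}$ the cylinder measure $\nu(\cylin{u}{1})$ equals the asymptotic frequency of $u$ in any element of $\subshift$. Hence $\eta(L)$ equals the minimum such frequency over all words of length $L$, and the task reduces to identifying, up to multiplicative constants independent of $L$, the rarest subword of each length.

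I would then parametrize these subwords through the hierarchical Toeplitz structure. For $s_{j-1} < L \leq s_{j}$, Proposition~\ref{prop:enthalten} shows that every word of length $L$ is a factor of $\block{j-1}\,a\,\block{j-1}$ for some $a \in \alphab_{j}$, so an occurrence in $\omega$ is encoded by an offset modulo $s_{j}$ together with an assignment of letters to the undetermined positions of $\omega_{k}$ (for various $k$) that fall inside the window. Because the construction is recursive, each such position is filled with $a_{k+1}$ at the next step with conditional probability $\tfrac{n_{k+1}-1}{n_{k+1}}$ or else passed up to level $k+1$, so positions filled with $a_{i}$ exactly at level $i$ have density $\tfrac{n_{i}-1}{s_{i+1}}$. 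Consequently, the density of any letter among the positions in $U_{k}$ is dominated by its first occurrence at some index $i > k$ in the coding sequence and is of order $\tfrac{1}{s_{i}}$ with a uniform prefactor $\tfrac{n_{i}-1}{n_{i}} \in [\tfrac{1}{2},1)$.

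With this bookkeeping the two cases of the proposition fall out. For $2s_{j-1} \leq L \leq s_{j}$ every window contains at least two positions of $U_{j-2}$ but at most one position of $U_{j-1}$; the rarest word is one that forces the last-appearing letter $a_{\alleB(j-2)}$ of $\alphab_{j-1}$ to occupy such a position, yielding frequency $\asymp \tfrac{1}{s_{\alleB(j-2)}}$. For $s_{j-1} < L < 2s_{j-1}$ the window contains only one or two positions from $U_{j-2}$ and two rarity mechanisms compete: either a rare letter forced at a single junction, now one level deeper and giving $\tfrac{1}{s_{\alleB(j-3)}}$, or a shift-restricted configuration whose multiplicity is controlled by the spacing before the next occurrence of $a_{j-2}$ in $(a_{k})$, giving the counting term $\lceil (2s_{j-1}-L)/s_{j-2} \rceil / s_{\min\{i > j-1 \,:\, a_{i}=a_{j-2}\}}$; the minimum of the two governs $\eta(L)$. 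The main obstacle is tracking all junctions inside a window simultaneously, which requires a finite case analysis on the offset modulo $s_{j}$ to rule out configurations that might appear still rarer; the uniform density estimates then produce absolute constants $c_{1},c_{2}$ depending only on $\card{\alphabEv}$ and on lower bounds for the $n_{k}$.
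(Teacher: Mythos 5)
First, be aware that the paper contains no proof of this statement: it is Proposition~4.2 of \cite{LiuQu_Simple}, merely restated in the paper's notation (the sentence immediately preceding it says exactly this). So there is no in-paper argument to compare yours against, and a genuine proof here would amount to reproving Liu and Qu's result from scratch.

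Your framework is the right one --- unique ergodicity turns $\nu(\cylin{u}{1})$ into a frequency, and the hierarchical filling gives density $\tfrac{n_{i}-1}{s_{i+1}}\asymp s_{i}^{-1}$ for the positions that receive the letter $a_{i}$ exactly at level $i$, so that the frequency of a word is (number of admissible offsets) times (density of the positions carrying the rarest letter the word forces). But the entire content of the proposition is hidden inside your sentence deferring ``a finite case analysis on the offset modulo $s_{j}$ to rule out configurations that might appear still rarer'': that analysis \emph{is} the proof, and it is where the specific indices in the two cases come from. Concretely, in the range $2s_{j-1}\le L\le s_{j}$ you assert the rarest word forces $a_{\alleB(j-2)}$ at a single junction, but you must also dispose of the factors of $\block{j-1}\,a_{j-1}\,\block{j-1}$ of length $s_{j}$ (these lie in $\langu{\subshift}$ whenever $a_{j-1}\in\alphab_{j}$, by Proposition~\ref{prop:enthalten}); such a factor sees $n_{j-1}$ consecutive level-$(j-2)$ junctions all carrying $a_{j-1}$, and since only $n_{j-1}-1$ consecutive junctions between two $U_{j-1}$-positions carry $a_{j-1}$, it forces a $U_{j-1}$-position to carry $a_{j-1}$ again --- an event whose density is governed by $\min\{i>j : a_{i}=a_{j-1}\}$, an index that is \emph{not} controlled by $\alleB(j-2)$ (the letter $a_{j-1}$ is checked off at index $j-1$ in the definition of $\alleB(j-2)$, so its recurrence can be arbitrarily late); note that exactly this recurrence mechanism, one level down, is what produces the term $s_{\min\{i>j-1 : a_{i}=a_{j-2}\}}$ in the first case. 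Likewise, the numerator $\lceil(2s_{j-1}-L)/s_{j-2}\rceil$ does not measure ``spacing in the coding sequence'' as you suggest; it counts the translates of the word inside the long $s_{j-2}$-periodic run created when a $U_{j-2}$-position carries $a_{j-2}$, and accounting for such misaligned occurrences is part of the same case analysis. Without these arguments your sketch establishes at best the left-hand inequalities $c_{1}\eta(L)\le(\cdots)$ by exhibiting candidate rare words, and not the matching lower bounds on $\eta(L)$, which are what the applications in Section~\ref{sec:BoshJacobi} actually use.
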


Using the above description of $\eta( L )$ from \cite{LiuQu_Simple}, we will now prove the following:

\begin{prop}
\label{prop:BoundedProdBosh}
A simple Toeplitz subshift satisfies (B) if and only if there exists a sequence $(k_{r})_{r}$ of natural numbers with $ \lim_{r \to \infty} k_{r} = \infty $ such that $ \prod_{j = k_{r}+1}^{\alleB(k_{r}-1)-1} n_{j} $ is bounded.
\end{prop}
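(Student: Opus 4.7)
The plan is to apply Proposition~\ref{prop:Eta} to translate the Boshernitzan condition into a statement about the product $\prod_{j=k_r+1}^{\alleB(k_r-1)-1} n_j$. First I would dispose of the case $\card{\alphabEv} = 2$ separately: Proposition~4.1 of \cite{LiuQu_Simple} ensures that (B) holds automatically, and simultaneously the assumption $a_k \neq a_{k+1}$ together with $\alphab_k = \alphabEv = \{a,b\}$ for all sufficiently large $k$ forces $\alleB(k-1) = k+1$, so that the relevant product is the empty product equal to $1$ along the sequence $(k_r) = (k_0, k_0 + 1, k_0 + 2, \ldots)$. Hence the equivalence is trivial in this case.

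For $\card{\alphabEv} \geq 3$, write $s_j := n_0 \cdot \ldots \cdot n_{j-1}$ and let $c_1, c_2$ denote the constants from Proposition~\ref{prop:Eta}. For the direction $(\Leftarrow)$, given a sequence $(k_r)$ with $k_r \to \infty$ and $\prod_{j = k_r + 1}^{\alleB(k_r - 1) - 1} n_j \leq C$, I would test (B) on $L_r := s_{k_r + 1}$. Since $n_{k_r} \geq 2$ one has $L_r \geq 2 s_{k_r}$, so the second alternative of Proposition~\ref{prop:Eta} with index $j = k_r + 1$ applies and yields $\eta(L_r) \geq (c_2\, s_{\alleB(k_r - 1)})^{-1}$, giving
\[
L_r \cdot \eta(L_r) \;\geq\; \frac{s_{k_r + 1}}{c_2\, s_{\alleB(k_r - 1)}} \;=\; \frac{1}{c_2 \prod_{j = k_r + 1}^{\alleB(k_r - 1) - 1} n_j} \;\geq\; \frac{1}{c_2 C},
\]
which establishes (B).

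For the direction $(\Rightarrow)$, I would choose $L_r \to \infty$ with $L_r \cdot \eta(L_r) \geq \delta > 0$ and let $j_r$ satisfy $s_{j_r - 1} < L_r \leq s_{j_r}$; since $s_j \to \infty$ one automatically has $j_r \to \infty$. In the subcase $2 s_{j_r - 1} \leq L_r \leq s_{j_r}$, the second alternative of Proposition~\ref{prop:Eta} gives $\eta(L_r) \leq (c_1\, s_{\alleB(j_r - 2)})^{-1}$, and combining with $L_r \leq s_{j_r}$ yields $\prod_{j = j_r}^{\alleB(j_r - 2) - 1} n_j \leq (c_1 \delta)^{-1}$, which after setting $k_r := j_r - 1$ is precisely the required product. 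In the subcase $s_{j_r - 1} < L_r < 2 s_{j_r - 1}$, the first alternative of Proposition~\ref{prop:Eta} bounds $\eta(L_r)$ above by $(c_1\, s_{\alleB(j_r - 3)})^{-1}$ (since the minimum is bounded by its second term), and combined with $L_r < 2 s_{j_r - 1}$ this gives $\prod_{j = j_r - 1}^{\alleB(j_r - 3) - 1} n_j \leq 2 (c_1 \delta)^{-1}$, so that $k_r := j_r - 2$ works. After passing to a subsequence on which the subcase is fixed, the conclusion follows. The hard part will simply be keeping the index shifts between Proposition~\ref{prop:Eta} (parametrised by $j$ with $L \in (s_{j-1}, s_j]$) and the claim (phrased in terms of $k$) straight, since the correct correspondence is $k = j - 1$ in the first subcase but $k = j - 2$ in the second.
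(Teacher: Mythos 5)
Your proposal is correct and follows essentially the same route as the paper: both directions rest on Proposition~\ref{prop:Eta}, the $(\Leftarrow)$ direction tests (B) on the same lengths $L_{r} = s_{k_{r}+1}$, and the $(\Rightarrow)$ direction uses the same two-case analysis with the same index shifts ($k = j-1$ versus $k = j-2$). The only cosmetic difference is that the paper proves $(\Rightarrow)$ by contraposition (if every such product diverges then $L \cdot \eta(L) \to 0$), whereas you extract a subsequence realising the positive $\limsup$ directly; the underlying estimates are identical.
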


\begin{proof}
For $\card{\alphabEv}= 2$, the claimed equivalence is true for trivial reasons: For all $r$ such that $ k_{r} > \eventNr$ holds, we have $ \alleB( k_{r}-1 ) = k_{r}+1 $. Hence $ \prod_{j = k_{r}+1}^{\alleB(k_{r}-1)-1} n_{j} = 1$ is the empty product and therefore bounded. In addition (B) is always satisfied according to Proposition~4.1 in \cite{LiuQu_Simple}.

For $\card{\alphabEv} \geq 3$ we first prove that boundedness of the product $ \prod_{j = k_{r}+1}^{\alleB(k_{r}-1)-1} n_{j} $ implies that $ \limsup_{L \to \infty} L \cdot \eta( L )  > 0 $ holds. For this, we consider the subsequence $(L_{r})$ that is given by $ L_{r} := s_{k_{r}+1} = n_{0} \cdot \hdots \cdot n_{k_{r}}$. The description of $\eta(L)$ in Proposition~\ref{prop:Eta} yields $ \eta( L_{r} ) \geq ( c_{2} \cdot s_{\alleB( k_{r}-1 )} )^{-1} $. Thus we obtain
\[ \limsup_{L \to \infty} L \cdot \eta( L ) \geq \limsup_{r \to \infty} L_{r} \cdot \eta( L_{r} ) \geq \limsup_{r \to \infty} \frac{s_{k_{r}+1}}{c_{2} \cdot s_{\alleB( k_{r}-1 )} } = \frac{1}{c_{2}} \limsup_{r \to \infty} \frac{1}{\prod_{j=k_{r}+1}^{\alleB( k_{r}-1 ) - 1} n_{j} } > 0 \, . \]

To prove the converse, assume that no sequence $(k_{r})$ exists for which the product is bounded. Then $\lim_{k \to  \infty} \prod_{j = k+1}^{\alleB(k-1)-1} n_{j} = \infty$ holds and we will show that this implies $ \lim_{L \to \infty} L \cdot \eta( L ) = 0 $. For every $L$, we define $j$ as above by $ s_{j-1} < L \leq s_{j} $. In the case of $ s_{j-1} < L < 2s_{j-1} $, Proposition~\ref{prop:Eta} yields
\begin{align*}
L \cdot \eta( L )
& \leq \frac{L}{c_{1}} \cdot \min \Big\{ \frac{ \Big\lceil \frac{ 2s_{ j-1 } - L }{ s_{ j-2 } } \Big\rceil }{ s_{ \min \{ i > j-1 \, : \, a_{i} = a_{j-2} \} } } \; , \; \frac{1}{ s_{ \alleB( j-3 ) } } \Big\} \\
& < \frac{2s_{j-1}}{c_{1}} \cdot \frac{1}{ s_{ \alleB( j-3 ) } } \\
& = \frac{2}{c_{1}} \cdot \frac{ 1 }{ \prod_{i=j-1}^{\alleB( j-3 )-1} n_{i} } \xrightarrow{j \to \infty} 0 \, .
\end{align*}
In the case of $ 2s_{j-1} \leq L \leq s_{j} $, Proposition~\ref{prop:Eta} yields
\[ L \cdot \eta( L ) \leq \frac{L}{c_{1}} \cdot \frac{1}{ s_{\alleB( j-2 )} } \leq \frac{s_{j}}{c_{1}} \cdot \frac{1}{ s_{\alleB( j-2 )} } = \frac{1}{c_{1}} \cdot \frac{ 1 }{ \prod_{i=j}^{\alleB( j-2 )-1} n_{i} } \xrightarrow{j \to \infty} 0 \, . \qedhere \]
\end{proof}

We will now characterize the existence of such a sequence $(k_{r})$ in terms of the sequence $(m_{i})$. Recall that $(m_{i})$ was defined as those positions where $\alleB$ increases. Hence, for $ k = m_{i}+1, \hdots , m_{i+1} $ the value of $\alleB( k-1 ) $ is constant. Therefore, in this range, the product $ \prod_{j = k+1}^{\alleB(k-1)-1} n_{j} $ is minimal at $ k  =  m_{i+1}$.

\begin{prop}
\label{prop:SequeNMLimInf}
There exists a sequence $(k_{r})_{r} $ with $ \lim_{r \to \infty} k_{r} = \infty $ such that $ \prod_{j = k_{r}+1}^{\alleB(k_{r}-1)-1} n_{j} $ is bounded if and only if there exists a subsequence $(m_{i_{r}})_{r}$ of $(m_{i})$ such that $ \prod_{j = m_{i_{r}}+1}^{\alleB(m_{i_{r}}-1)-1} n_{j} $ is bounded.
\end{prop}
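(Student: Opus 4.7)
The plan is as follows. The ``if'' direction is immediate: given a subsequence $(m_{i_r})$ of $(m_i)$ with bounded products, simply set $k_r := m_{i_r}$. Since the sequence $(m_i)$ is strictly increasing by construction, this yields $k_r \to \infty$ and the products coincide by definition.

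For the ``only if'' direction I would exploit the defining property of $(m_i)$, namely that $\alleB$ is constant on each interval $[m_i,m_{i+1}-1]$ and jumps at $m_{i+1}$. Given a sequence $(k_r)$ with $k_r \to \infty$ and $\prod_{j=k_r+1}^{\alleB(k_r-1)-1} n_j \leq C$, I would, for each $r$, locate the unique index $\ell_r$ such that $k_r \in [m_{\ell_r}+1,\, m_{\ell_r+1}]$. This forces $k_r - 1 \in [m_{\ell_r}, m_{\ell_r+1}-1]$, so by the constancy of $\alleB$ on this interval,
\[ \alleB(k_r - 1) \;=\; \alleB(m_{\ell_r}) \;=\; \alleB(m_{\ell_r+1} - 1) \, . \]
In particular, both of the products $\prod_{j=k_r+1}^{\alleB(k_r-1)-1} n_j$ and $\prod_{j=m_{\ell_r+1}+1}^{\alleB(m_{\ell_r+1}-1)-1} n_j$ have the same upper endpoint $\alleB(m_{\ell_r})-1$.

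Since $k_r \leq m_{\ell_r+1}$, the index range $\{m_{\ell_r+1}+1, \hdots, \alleB(m_{\ell_r})-1\}$ is contained in $\{k_r+1, \hdots, \alleB(m_{\ell_r})-1\}$, and as all factors satisfy $n_j \geq 2 \geq 1$, this yields
\[ \prod_{j=m_{\ell_r+1}+1}^{\alleB(m_{\ell_r+1}-1)-1} n_j \;\leq\; \prod_{j=k_r+1}^{\alleB(k_r-1)-1} n_j \;\leq\; C \, . \]
Thus the indices $i_r := \ell_r + 1$ give products that are bounded by $C$. Since $m_{i_r} = m_{\ell_r+1} \geq k_r \to \infty$ and $(m_i)$ is strictly increasing, after passing to a subsequence to ensure strict monotonicity of $(i_r)$, we obtain the required subsequence of $(m_i)$.

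The argument is essentially bookkeeping around the intervals on which $\alleB$ is constant, so I do not anticipate a genuine obstacle; the only subtlety is making sure to extract a strictly increasing sub-subsequence of indices at the end, which is automatic from $m_{i_r} \to \infty$.
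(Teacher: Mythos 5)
Your proposal is correct and follows essentially the same route as the paper: the forward direction by taking $k_{r} := m_{i_{r}}$, and the converse by locating $k_{r}-1$ in the interval $[m_{\ell_{r}}, m_{\ell_{r}+1}-1]$ on which $\alleB$ is constant and then comparing the two products, which share the upper endpoint $\alleB(m_{\ell_{r}})-1$ while the one starting at $m_{\ell_{r}+1}+1$ ranges over a subset of the indices. Your closing remark about extracting a strictly increasing subsequence of indices is a harmless extra detail that the paper leaves implicit.
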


\begin{proof}
One implication is clear: If $ (m_{i_{r}}) $ is such a subsequence, then we define $ k_{r} := m_{i_{r}} $ and we are done. For the converse implication, assume that $(k_{r})$ is a sequence such that the product $ \prod_{j = k_{r}+1}^{\alleB(k_{r}-1)-1} n_{j} $ is bounded. For every $r$ there is an index $i_{r}$ such that $ m_{i_{r}-1} \leq k_{r}-1 < m_{i_{r}} $ holds. By definition of the sequence $(m_{i})$ this implies $ \alleB( m_{i_{r}-1} ) =  \alleB( k_{r}-1 ) = \alleB( m_{i_{r}} - 1 ) < \alleB( m_{i_{r}} ) $. This yields
\begin{align*}
\prod_{j = m_{i_{r}}+1}^{\alleB(m_{i_{r}}-1)-1} n_{j} &\leq \prod_{j = k_{r}+1}^{\alleB(k_{r}-1)-1} n_{j} \, ,
\end{align*}
which shows that $ \prod_{j = m_{i_{r}}+1}^{\alleB(m_{i_{r}}-1)-1} n_{j} $ is bounded.
\end{proof}

\begin{rem}
As shown in Proposition~\ref{coro:CharLinRepe}, a simple Toeplitz subshift is linear repetitive if and only if the sequence $ \left( \prod_{j = m_{i}+1}^{\alleB(m_{i}) - 1} n_{j} \right)_{i \geq 1} $ is bounded. By combining the Propositions~\ref{prop:BoundedProdBosh} and \ref{prop:SequeNMLimInf} we obtain that a simple Toeplitz subshift satisfies (B) if and only if there exists a subsequence $(m_{i_{r}})_{r}$ of $(m_{i})$ such that $ \left( \prod_{j = m_{i_{r}}+1}^{\alleB(m_{i_{r}}-1)-1} n_{j} \right)_{r} $ is bounded. In that sense, the Boshernitzan condition is a weaker analogue of linear repetitivity. 
\end{rem}

\begin{coro}
\label{coro:A3Bosh}
Let $\subshift$ be the subshift associated to a simple Toeplitz word with  $\card{\alphabEv}=3$. Then $\subshift$ satisfies (B) if and only if $ \liminf_{i \to \infty} n_{m_{i}+1} < \infty$ holds.
\end{coro}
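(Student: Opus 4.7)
The plan is to combine the characterisation of (B) given in the remark following Proposition~\ref{prop:SequeNMLimInf} with an explicit computation of $\alleB(m_i - 1)$ in the three-letter case. That remark asserts that $\subshift$ satisfies (B) if and only if there exists a subsequence $(m_{i_r})_r$ of $(m_i)$ such that
\[ \prod_{j = m_{i_r}+1}^{\alleB(m_{i_r}-1)-1} n_{j} \]
is bounded. So it suffices to show that, for $\card{\alphabEv} = 3$ and all sufficiently large $i$, this product collapses to the single factor $n_{m_i+1}$; boundedness along a subsequence is then precisely $\liminf_{i \to \infty} n_{m_i+1} < \infty$.

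First I would show the key identity $\alleB(m_i - 1) = m_i + 2$ for all $i$ large enough that $m_i \geq \eventNr$. By definition of the sequence $(m_i)$, the function $\alleB$ is constant on the block $\{m_{i-1}, m_{i-1}+1, \ldots, m_i - 1\}$, so $\alleB(m_i - 1) = \alleB(m_{i-1})$. It therefore remains to show $\alleB(m_{i-1}) = m_i + 2$, equivalently $m_i = \alleB(m_{i-1}) - 2$. This is exactly the formula derived in the example following Proposition~\ref{prop:an=aKn} for $\card{\alphabEv} = 3$, and its proof uses only: (i) that $\alphab_{m_{i-1}+1} = \alphabEv$ for $m_{i-1}\geq \eventNr$, (ii) the minimality in the definition of $\alleB$, which forces $a_{\alleB(m_{i-1})}$ to be absent from $\{a_{m_{i-1}+1}, \ldots, a_{\alleB(m_{i-1})-1}\}$, and (iii) the standing assumption $a_k \neq a_{k+1}$, which forces the block between $m_{i-1}+1$ and $\alleB(m_{i-1})-1$ to alternate between the other two letters, so the latest occurrence of the missing letter's ``partner'' lies at position $\alleB(m_{i-1})-2$. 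The conclusion is $m_i = \alleB(m_{i-1})-2$, hence $\alleB(m_i - 1) - 1 = m_i + 1$.

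With this identity in hand, the product simplifies for all large $i$ to
\[ \prod_{j = m_{i}+1}^{\alleB(m_{i}-1)-1} n_{j} = \prod_{j = m_i+1}^{m_i+1} n_j = n_{m_i + 1}, \]
and the equivalence from the characterisation reads: $\subshift$ satisfies (B) if and only if the sequence $(n_{m_i+1})_i$ has a bounded subsequence, i.e.\ $\liminf_{i \to \infty} n_{m_i+1} < \infty$.

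The only non-routine step is the determination of $\alleB(m_i - 1) = m_i + 2$; everything else is bookkeeping. Since this identity is already invoked in the excerpt (cf.\ the $\card{\alphabEv} = 3$ example after Proposition~\ref{prop:an=aKn}), the argument is a short citation plus a single substitution into the general characterisation.
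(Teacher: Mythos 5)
Your proposal is correct and follows essentially the same route as the paper: both reduce the claim to the identity $\alleB(m_i-1)=m_i+2$ for all sufficiently large $i$ (using that $\alphab_{m_i+1}=\alphabEv$ has three elements, the minimality in the definition of $\alleB$, the assumption $a_k\neq a_{k+1}$, and the maximality from Proposition~\ref{prop:MUndK}), and then apply Propositions~\ref{prop:BoundedProdBosh} and \ref{prop:SequeNMLimInf} to the resulting one-factor product $n_{m_i+1}$. The paper argues the identity directly rather than by citing the example after Proposition~\ref{prop:an=aKn}, but the underlying reasoning is identical.
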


\begin{proof}
The definitions of $\alleB$ and $(m_{i})$ imply $ a_{\alleB(m_{i-1})} \notin \{ a_{m_{i}} , \hdots , a_{\alleB(m_{i-1}) - 1} \}$. For sufficiently large $i$, we have $m_{i} \geq m_{i-1}+1 \geq \eventNr $ and obtain $ \card{ \{  a_{m_{i}} , \hdots , a_{\alleB(m_{i-1}) - 1} \} } = 2 $. Since $m_{i}$ is maximal with this property (cf. Proposition~\ref{prop:MUndK}) and consecutive letters are different, we obtain $ \alleB( m_{i}-1 ) = m_{i}+2$ for all such $i$ that $m_{i-1}+1 \geq \eventNr $ holds. Now the Propositions~\ref{prop:BoundedProdBosh} and \ref{prop:SequeNMLimInf} yield the claim.
\end{proof}

\begin{rem}
It was shown in \cite{LiuQu_Simple}, Corollary~4.3 that simple Toeplitz subshifts with $ \card{ \alphabEv } \geq 3 $, bounded sequence $n_{k}$ and 
\[ \lim_{k \to \infty} ( \alleB(k-2) - k ) = \infty \]
don't satisfy (B). However, for $\card{ \alphabEv } = 3 $ the assumptions of this statement cannot be satisfied, since $\card{ \alphabEv } = 3 $ implies $ \alleB( m_{i}-1 ) = m_{i} + 2 $, as we have seen in the proof of Corollary~\ref{coro:A3Bosh}. In particular, it follows from Corollary~\ref{coro:A3Bosh} that every simple Toeplitz subshift with $\card{\alphabEv} = 3$ and a bounded sequence $n_{k}$ satisfies (B). Note that this is not true for $\card{\alphabEv} > 3$. The 4-letter coding sequence
\[ a_{1} a_{2} a_{3} \hdots = (a b) c (a b)^{2} d (a b)^{3} c (a b)^{4} d \hdots \]
was given in \cite{LiuQu_Simple} as an example of a simple Toeplitz word with $\lim_{k \to \infty} ( \alleB(k-2) - k ) = \infty $. Hence, the associated subshift does not satisfy (B), independent of the sequence $(n_{k})$.
\end{rem}

We conclude this subsection with a discussion of generalized Grigorchuk subshifts. Recall the reason why they were defined with period lengths of the form $n_{k} = 2^{j_{k}}$: They are obtained from the constant sequence $ (n_{k})_{k \in \mathbb{N}_{0}} = (2, 2, 2, \hdots )$ and a sequence $(b_{k}) \in \alphab^{\mathbb{N}_{0}} $, where $b_{k} = b_{k+1}$ is allowed, by combining consecutive occurrences of the same letter. When a letter $b$ is repeated $j$ times, the period length of the resulting word is $ 2^{j} $. Conversely, when we have period lengths of the form $n_{k} = 2^{j_{k}}$, we can interpret the condition on $(n_{k})$ in Proposition~\ref{prop:BoundedProdBosh} as a condition on the sequence $(b_{k})$:

\begin{coro}
\label{coro:Power2Bosh}
Let $\subshift$ be a simple Toeplitz subshift with period lengths of the form $n_{k} = 2^{j_{k}}$ with $j_{k} \in \mathbb{N}$. Then $\subshift$ satisfies (B) if and only if there exists a constant $C$ and a sequence $(t_{r})_{r}$  with $ \lim_{r \to \infty} t_{r} = \infty $ such that for every $r$ the equality $ \{ b_{t_{r}} , \hdots , b_{t_{r}+C} \} = \{b_{i} : i \geq t_{r} \} $ holds.
\end{coro}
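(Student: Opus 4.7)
The plan is to translate Proposition~\ref{prop:BoundedProdBosh} into a condition on the original generating sequence $(b_{k})$. In the generalized Grigorchuk setup each letter $a_{m}$ of the reduced coding sequence arises from $\ell_{m} := \log_{2} n_{m}$ consecutive repetitions of the same letter in $(b_{k})$, so that
\[ \prod_{j = k+1}^{\alleB(k-1) - 1} n_{j} = 2^{\sum_{j = k+1}^{\alleB(k-1) - 1} \ell_{j}} . \]
Boundedness of this product is therefore equivalent to boundedness of the sum of $\ell_{j}$'s in that range, which is exactly the length of the window in $(b_{k})$ spanning the blocks corresponding to $a_{k+1}, \hdots, a_{\alleB(k-1)-1}$. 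By the definition of $\alleB$, the letters $a_{k}, a_{k+1}, \hdots, a_{\alleB(k-1)}$ are precisely the elements of the tail alphabet $\alphab_{k}$, so the condition on $(t_{r}, C)$ in the statement is the $b$-sequence reformulation of the bounded-product condition.

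To make this precise, first I introduce the partial sums $s_{m} := \ell_{0} + \hdots + \ell_{m-1}$, which is the first $b$-position of the block corresponding to $a_{m}$, and observe $s_{\alleB(k-1)} - s_{k+1} = \sum_{j=k+1}^{\alleB(k-1)-1} \ell_{j}$. For the forward direction, given a sequence $(k_{r})_{r}$ supplied by Propositions~\ref{prop:BoundedProdBosh} and \ref{prop:SequeNMLimInf} with $\sum_{j} \ell_{j} \leq M$, I set $t_{r} := s_{k_{r}+1} - 1$ (so that $b_{t_{r}} = a_{k_{r}}$) and $C := M + 1$. The bound on the sum places the first $b$-occurrence of $a_{\alleB(k_{r}-1)}$ inside $\{ t_{r}+1, \hdots, t_{r}+C \}$, and since $\{b_{i} : i \geq t_{r}\} = \{a_{m} : m \geq k_{r}\} = \alphab_{k_{r}}$, the window $\{b_{t_{r}}, \hdots, b_{t_{r}+C}\}$ already exhausts $\alphab_{k_{r}}$, giving equality. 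The divergence $t_{r} \to \infty$ follows from $k_{r} \to \infty$.

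Conversely, given $(t_{r}, C)$, I let $k_{r}$ be the unique index with $s_{k_{r}} \leq t_{r} < s_{k_{r}+1}$; then $t_{r} \to \infty$ forces $k_{r} \to \infty$. The hypothesis $\{b_{t_{r}}, \hdots, b_{t_{r}+C}\} = \{b_{i} : i \geq t_{r}\} = \alphab_{k_{r}}$ forces $a_{\alleB(k_{r}-1)}$ to appear in the window; by the minimality in the definition of $\alleB$, its first occurrence in $(b_{k})$ beyond position $s_{k_{r}}$ is precisely at $s_{\alleB(k_{r}-1)}$, so $s_{\alleB(k_{r}-1)} - s_{k_{r}+1} \leq C$, and hence $\prod_{j} n_{j} \leq 2^{C}$ is bounded. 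Proposition~\ref{prop:BoundedProdBosh} then yields (B).

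Very little is actually difficult here; the crux is the bookkeeping between the two indexings of the generating sequence. The only slightly delicate point is making sure that the set $\{b_{i} : i \geq t_{r}\}$ really equals $\alphab_{k_{r}}$, which uses both that $b_{t_{r}} = a_{k_{r}}$ and that every letter from the tail alphabet does appear in $(b_{k})$ beyond $t_{r}$. The trivial case $\card{\alphabEv} = 2$ deserves a separate remark: (B) is automatic by \cite{LiuQu_Simple}, Proposition~4.1, and once the coding sequence has entered the eventual alphabet, the sequence $(b_{k})$ is eventually alternating, so the window condition is trivially met by any $C \geq 2$.
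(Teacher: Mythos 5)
Your argument is correct and follows essentially the same route as the paper: Proposition~\ref{prop:BoundedProdBosh} converts (B) into boundedness of $\prod n_{j}$ over the window $[k_{r}+1,\alleB(k_{r}-1)-1]$, which for $n_{k}=2^{j_{k}}$ is boundedness of $\sum j_{k}$, and this is then matched against the $(b_{k})$-indexing via the block correspondence, with the window anchored at the last $b$-position of the $a_{k_{r}}$-block in one direction and read off from the first $b$-occurrence of $a_{\alleB(k_{r}-1)}$ in the other. The bookkeeping with $s_{m}$ and the identification $\{b_{i}: i\geq t_{r}\}=\alphab_{k_{r}}$ is exactly the content of the paper's proof, so there is nothing substantive to add.
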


\begin{proof}
From Proposition~\ref{prop:BoundedProdBosh} and $ n_{k} = 2^{j_{k}} $ we obtain the equivalence
\begin{align*}
&\subshift \text{ satisfies (B)}\\
\Longleftrightarrow \; &\text{There is a sequence } (k_{r})_{r} \text{ with } \lim_{r \to \infty} k_{r} = \infty \text{ such that } \prod \nolimits_{i = k_{r}+1}^{\alleB(k_{r}-1)-1} n_{i} \text{ is bounded.} \\
\Longleftrightarrow \; &\text{There is a sequence } (k_{r})_{r} \text{ with } \lim_{r \to \infty} k_{r} = \infty  \text{ such that } \sum \nolimits_{i = k_{r}+1}^{\alleB(k_{r}-1)-1} j_{i} \text{ is bounded.}
\end{align*}
Let the letter $a_{k}$ correspond to the letters $b_{t} = \hdots = b_{t+ j_{k} - 1}$, the letter $a_{k+1}$ correspond to the letters $b_{t + j_{k}} = \hdots = b_{t+ j_{k} + j_{k+1}-1} $ and so on, such that the letter $a_{\alleB(k-1)}$ corresponds to the letters $ b_{t + j_{k} + \hdots + j_{\alleB(k-1) -1}} = \hdots = b_{j_{k} + \hdots + j_{\alleB(k-1)}-1} $. The definition of $\alleB(k-1) $ yields
\[ \alphab_{k} = \{ a_{k}, \hdots , a_{\alleB(k-1)} \} = \{ b_{t+ j_{k} - 1}, b_{t + j_{k}} , \hdots , b_{t + j_{k} + \hdots + j_{\alleB(k-1) -1}-1}, b_{t + j_{k} + \hdots + j_{\alleB(k-1) -1}} \} \, . \]

The set on the right hand side contains $2 + \sum_{i = k+1}^{\alleB(k-1) -1} j_{i}$ elements. When there exists a sequence $ k_{r} $ such that this sum is bounded, then every set of the form
\[ \{ b_{t_{r}+ j_{k_{r}} - 1}, \hdots , b_{t_{r} + j_{k_{r}} + \hdots + j_{\alleB(k_{r}-1) -1}} \} \]
has the claimed property. Conversely, assume that a sequence $t_{r}$ exists such that for every $r$ the property $ \{ b_{t_{r}} , \hdots , b_{t_{r}+C} \} = \{b_{i} : i \geq t_{r} \} $ holds. Let $ a_{k_{r}} $ denote the letter that corresponds to $b_{t_{r}}$. Then $a_{\alleB( k_{r}-1 ) - 1}$ corresponds to a letter $b_{t}$ with $t \leq t_{r}+C$. Since $j_{k}$ denotes the multiplicity of the letter $b_{t}$ corresponding to $a_{k}$, we obtain
\[  \sum \nolimits_{i = k_{r}+1}^{\alleB(k_{r}-1)-1} j_{i} \leq (t_{r}+C) - (t_{r}+1) +1 = C \, . \qedhere \]
\end{proof}

\begin{rem}
The criterion for the Boshernitzan condition for generalized Grigorchuk subshifts in the previous Corollary~\ref{coro:Power2Bosh} is similar to a criterion for the Boshernitzan condition for self similar groups by Nagnibeda and Pérez (\cite{NagnPerez_SchreierGr}): Let $G_{\omega}$ be the element of the family of Grigorchuk's groups (or, more general, a spinal group) that is defined by the sequence $\omega = (\omega_{t})_{t}$ of epimorphisms. The action of $G_{\omega}$ on the boundary $\partial T$ of the tree $T$ defines for every ray $\xi \in \partial T$ a rooted Schreier graph with root $\xi$. Consider the closure of the set of all these rooted Schreier graphs, except the one defined by the rightmost ray in the tree. Then $G_{\omega}$ acts on this set by shifting the root. Francoeur, Nagnibeda and Pérez show that this action satisfies the Boshernitzan condition if and only if there exists a constant $C$ and a sequence $ (t_{r})_{r} $ with $ \lim_{r \to \infty} t_{r} = \infty $ such that for every $r$ the equality $ \{ \omega_{t_{r}} , \hdots , \omega_{t_{r} + C} \} = \{ \omega_{i} : i \geq t_{r} \} $ holds.
\end{rem}

\subsection{Application: Jacobi Operators and Cocycles}

In this subsection, we briefly discuss the implications of the Boshernitzan condition for the spectrum of Jacobi operators associated to a subshift. First we recall the standard notions of transfer matrices and cocycles. By a result from \cite{LiuQu_Simple}, which is based on cocycles, Schrödinger operators on simple Toeplitz subshifts always have Cantor spectrum of Lebesgue measure zero. Finally, we quote a result from \cite{BeckPogo_SpectrJacobi}, which connects the Boshernitzan condition to Cantor spectrum of Lebesgue measure zero for Jacobi operators. As a corollary, the results about the Boshernitzan condition from the previous subsection allow conclusions about the spectrum.

We begin with the definitions of our objects of interest: Let $p : \subshift \to \mathbb{R} \setminus \{ 0 \}$ and $q : \subshift \to \mathbb{R}$ be continuous functions. For an element $ \omega \in \Omega $, the operator
\[ \Jac_{\omega} \! : \ell^{2}(\mathbb{Z}) \to \ell^{2}(\mathbb{Z}) \; , \;\; ( \Jac_{\omega} \psi )(k) =p(\Shift^{k} \omega) \psi(k-1) + q( \Shift^{k} \omega ) \psi(k) + p( \Shift^{k+1} \omega ) \psi(k+1) \]
is called the Jacobi operator associated to $\omega$. In the following, we will always assume that $p$ and $q$ take only finitely many values. Moreover we require the dynamical system that is defined by
\[ \widetilde{\subshift} := \left\{  \begin{pmatrix} p( \omega ) \\ q( \omega ) \end{pmatrix} \, : \omega \in \subshift \right\} \quad \text{and}\qquad \widetilde{ \Shift } : \widetilde{\subshift} \to \widetilde{\subshift} \, ,\; \begin{pmatrix} p( \omega ) \\ q( \omega ) \end{pmatrix} \mapsto \begin{pmatrix} p( \Shift \omega ) \\ q( \Shift \omega ) \end{pmatrix} \, ,\]
with the product topology on $\widetilde{\subshift}$, to be aperiodic. Since simple Toeplitz subshifts are minimal, the spectrum of $\Jac_{\omega}$ is, as a set, independent of $\omega \in \subshift$ and we call it the spectrum of the Jacobi operator on the subshift.

When investigating the spectrum of the Jacobi operator, an important tool are transfer matrices: A solution $\varphi$ to the eigenvalue equation $ \Jac_{\omega} \varphi = E \varphi $, with $ E \in \mathbb{R} $, is determined by its value at two consecutive positions. All other values can be computed from
\[ \varphi(k+1) = \frac{ ( E - q( \Shift^{k} \omega ) ) \varphi(k) - p(\Shift^{k} \omega) \varphi(k-1) }{p( \Shift^{k+1} \omega )}  \]
and
\[  \varphi(k-1) = \frac{ ( E - q( \Shift^{k} \omega ) ) \varphi(k) - p( \Shift^{k+1} \omega ) \varphi(k+1) }{p(\Shift^{k} \omega)} \, . \]
When we choose $\varphi(0)$ and $\varphi(1)$ as start values and express the above dependence with the help of matrices, we obtain for $k > 0$ the equality
\[ \Bigg(\begin{matrix} \varphi(k+1) \\ \varphi(k) \end{matrix} \Bigg) = \Bigg( \begin{matrix} \frac{ E - q( \Shift^{k} \omega ) }{p( \Shift^{k+1} \omega )} & - \frac{ p(\Shift^{k} \omega) }{p( \Shift^{k+1} \omega )} \\ 1 & 0 \end{matrix}\Bigg) \cdot \hdots \cdot \Bigg( \begin{matrix} \frac{ E - q( \Shift \omega ) }{p( \Shift^{2} \omega )} & - \frac{ p(\Shift \omega) }{p( \Shift^{2} \omega )} \\ 1 & 0 \end{matrix} \Bigg) \Bigg( \begin{matrix} \varphi(1) \\ \varphi(0) \end{matrix}\Bigg) \]
and for $k < 0$ the equality
\[ \Bigg( \begin{matrix} \varphi(k+1) \\ \varphi(k) \end{matrix} \Bigg) 
= \Bigg( \begin{matrix} 0 & 1 \\ - \frac{p( \Shift^{k+2} \omega )}{p(\Shift^{k+1} \omega)} & \frac{ E - q( \Shift^{k+1} \omega ) }{p(\Shift^{k+1} \omega)} \end{matrix} \Bigg) \cdot \hdots \cdot \Bigg( \begin{matrix} 0 & 1 \\ - \frac{p( \Shift \omega )}{p(\omega)} & \frac{ E - q(\omega) }{p(\omega)} \end{matrix} \Bigg) \Bigg( \begin{matrix} \varphi(1) \\ \varphi(0) \end{matrix} \Bigg) \, . \]
For a continuous map $M : \subshift \to \GL( 2, \mathbb{R} ) $, the associated cocycle $ M : \mathbb{Z} \times \subshift \to \GL( 2, \mathbb{R} ) $ is defined as 
\[ (n, \omega) \mapsto \begin{cases}
M( \Shift^{n-1} \omega ) \cdot \hdots \cdot M( \omega ) & \text{for } n > 0 \\
Id & \text{for } n = 0 \\
M( \Shift^{n} \omega )^{-1} \cdot \hdots \cdot M( \Shift^{-1} \omega )^{-1} & \text{for } n < 0
\end{cases} \, . \]
The transfer matrices are the maps
\[ M^{E} : \subshift \to \GL( 2, \mathbb{R} ) \, ,\; \omega \mapsto \begin{pmatrix} \frac{ E - q( \Shift \omega ) }{p( \Shift^{2} \omega )} & - \frac{ p(\Shift \omega) }{p( \Shift^{2} \omega )} \\ 1 & 0 \end{pmatrix} \, , \]
with $E \in \mathbb{R}$, and their associated cocycles are precisely the above matrix products that determine an eigenfunction $\varphi$ from two consecutive positions.

The properties of the transfer matrices are connected to properties of the spectrum of the Jacobi operator in the following way: A function $ M : \subshift \to \GL(2, \mathbb{R})$ is called uniform, if $ \lim_{n \to \infty} \frac{1}{n} \ln \left( \| M( n , \omega) \| \right) $ exists for all $ \omega \in \subshift $ and the convergence is uniform on $\subshift$ (cf. \cite{Furman_MultiErgodThm}, page~803). It was shown in \cite{BeckPogo_SpectrJacobi}, Theorem~3, that the spectrum of the Jacobi operator on a minimal, uniquely ergodic and aperiodic subshift is a Cantor set of Lebesgue measure zero if the transfer matrix is uniform for every $E \in \mathbb{R}$. Bases on this, Cantor spectrum can be deduced in the following cases:

\paragraph{Schrödinger operators}
For $p = 1$, the resulting operator 
\[ \Jac_{\omega} \! : \ell^{2}(\mathbb{Z}) \to \ell^{2}(\mathbb{Z}) \; , \;\; ( \Jac_{\omega} \psi )(k) = \psi(k-1) + q( \Shift^{k} \omega ) \psi(k) + \psi(k+1) \]
is called the Schrödinger operator associated to $\omega$. In \cite{LiuQu_Simple}, Theorem~1.1, it was shown for every simple Toeplitz subshift that the transfer matrix $M^{E}$ of the Schrödinger operator is uniform for all $E \in \mathbb{R}$. Thus, the spectrum of a Schrödinger operator on a simple Toeplitz subshift is always a Cantor set of Lebesgue measure zero.

\paragraph{Boshernitzan condition}
For the uniformity of $M^{E}$ it is sufficient that the subshift satisfies (B). More precisely, the following proposition is obtained as Corollary~4 in \cite{BeckPogo_SpectrJacobi}, using a result from \cite{DamLenz_Boshern}:

\begin{prop}[\cite{BeckPogo_SpectrJacobi}]
Let $(\subshift, \Shift)$ be a minimal, aperiodic subshift such that the Boshernitzan condition holds. Consider the family of the corresponding Jacobi operators $ \{ \Jac_{\omega} \}_{\omega \in \subshift} $ where the continuous maps p and q take finitely many values and the aperiodicity of the subshifts carries over to $( \widetilde{ \subshift }, \widetilde{ \Shift })$. Then the transfer matrix $M^{E} \, : \subshift \to \GL(2, \mathbb{R})$ is uniform for each $E \in \mathbb{R}$. In particular, the spectrum $\Sigma$ is a Cantor set of Lebesgue measure zero.
\end{prop}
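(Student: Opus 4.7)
The statement is quoted as Corollary~4 in \cite{BeckPogo_SpectrJacobi} and combines two independent ingredients: a general uniformity theorem for locally constant cocycles under the Boshernitzan condition (\cite{DamLenz_Boshern}) and the criterion from Theorem~3 of \cite{BeckPogo_SpectrJacobi} saying that uniformity of all transfer cocycles forces the spectrum to be a Cantor set of Lebesgue measure zero. The plan is therefore to verify the hypotheses of these two results and chain them together, rather than to work anew from the definition of uniformity.

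First I would establish that the transfer matrix map $M^{E}:\subshift \to \GL(2,\mathbb{R})$ is \emph{locally constant}, i.e.\ there exists $N\in\mathbb{N}$ such that $M^{E}(\omega)$ depends only on $\omega(-N),\ldots,\omega(N)$. Since $p$ and $q$ are continuous with finite image and $\subshift$ is compact and totally disconnected, every level set $p^{-1}(s)$ and $q^{-1}(t)$ is clopen, hence a finite union of cylinder sets. Consequently $p$ and $q$ are locally constant, and the explicit formula for $M^{E}(\omega)$ given in the excerpt shows that the same holds for $M^{E}$. Next, recall that the Boshernitzan condition on a minimal subshift implies unique ergodicity (by Boshernitzan's original result), so the hypotheses of Theorem~3 in \cite{BeckPogo_SpectrJacobi} about the base system are in place.

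The main content is then the uniformity of $M^{E}$ for every $E\in\mathbb{R}$. This is where I would invoke the Damanik--Lenz theorem from \cite{DamLenz_Boshern}: any locally constant cocycle over a minimal subshift satisfying (B) is uniform. Strictly speaking, that theorem is formulated for $\mathrm{SL}(2,\mathbb{R})$-valued cocycles, so one small preliminary is to factor $M^{E}$ as a scalar cocycle times an $\mathrm{SL}(2,\mathbb{R})$-cocycle. Because $p$ takes only finitely many nonzero real values, $\det M^{E}(\omega)=-p(\Shift\omega)/p(\Shift^{2}\omega)$ takes finitely many values bounded away from $0$ and $\infty$, so its cocycle is trivially uniform; dividing by a continuous square root of $|\det M^{E}|$ (again locally constant) reduces the question to an $\mathrm{SL}(2,\mathbb{R})$-valued locally constant cocycle, to which \cite{DamLenz_Boshern} applies verbatim.

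With uniformity of $M^{E}$ established for every $E\in\mathbb{R}$, the conclusion follows by feeding this into Theorem~3 of \cite{BeckPogo_SpectrJacobi}: on a minimal, uniquely ergodic, aperiodic subshift with uniform transfer cocycles for all energies, the common spectrum $\Sigma$ of the operators $\Jac_{\omega}$ is a Cantor set of Lebesgue measure zero. The only non-bookkeeping step is the $\mathrm{SL}(2,\mathbb{R})$-reduction sketched above; everything else is an application of quoted results, and the expected main obstacle is merely verifying that the hypotheses of \cite{DamLenz_Boshern} are stated in a form compatible with the Jacobi (not just Schrödinger) setting, which is handled by the scalar/$\mathrm{SL}(2,\mathbb{R})$ decomposition.
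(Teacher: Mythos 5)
Your proposal is correct and follows essentially the same route as the paper, which gives no independent proof but simply attributes the statement to Corollary~4 of \cite{BeckPogo_SpectrJacobi}, obtained by combining the Damanik--Lenz uniformity result from \cite{DamLenz_Boshern} with Theorem~3 of \cite{BeckPogo_SpectrJacobi} --- exactly the two ingredients you chain together. The additional detail you supply (local constancy of $M^{E}$ and the scalar/$\mathrm{SL}(2,\mathbb{R})$ reduction) is a sensible fleshing-out of that citation chain rather than a different approach.
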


By combining this statement with the results from Proposition~\ref{prop:BoundedProdBosh}, Corollary~\ref{coro:A3Bosh} and \cite{LiuQu_Simple}, Proposition~4.1 we obtain:

\begin{coro}
Let $\subshift$ be a simple Toeplitz subshift with coding $(a_{k})$ and $(n_{k})$. If there exists a sequence $(k_{r})_{r}$ of natural numbers with $ \lim_{r \to \infty} k_{r} = \infty $ such that $ \prod_{j = k_{r}+1}^{\alleB(k_{r}-1)-1} n_{j} $ is bounded, then the Jacobi operator on the subshift has Cantor spectrum of Lebesgue measure zero. This is in particular the case if either $ \card{ \alphab } = 2 $ holds or $\card{\alphabEv}=3$ and $ \liminf_{i \to \infty} n_{m_{i}+1} < \infty$ hold.
\end{coro}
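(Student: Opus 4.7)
The plan is to chain together the three earlier ingredients: the cited Beck--Pogorzelski theorem (Boshernitzan condition plus aperiodicity implies Cantor spectrum of Lebesgue measure zero for the Jacobi operator), Proposition~\ref{prop:BoundedProdBosh} (which reformulates (B) in terms of boundedness of $\prod_{j=k_{r}+1}^{\alleB(k_{r}-1)-1} n_{j}$ along some sequence $k_{r}\to\infty$), and the two special characterisations handling the cases $\card{\alphab}=2$ and $\card{\alphabEv}=3$.

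For the main statement I would argue as follows. From the previous section we already know that every simple Toeplitz subshift is minimal and uniquely ergodic, and the standing assumption $\card{\alphabEv}\geq 2$ guarantees aperiodicity (by the proposition in Section~\ref{subsec:SimpleTWords} that characterises periodicity as $\card{\alphabEv}=1$). The finite-valuedness of $p,q$ and the aperiodicity of the induced system $(\widetilde{\subshift},\widetilde{\Shift})$ are assumed as hypotheses on the operator family, so the conditions of the quoted Beck--Pogorzelski proposition are met as soon as (B) is verified. Given the hypothesis on $(k_{r})$, Proposition~\ref{prop:BoundedProdBosh} yields (B) directly, and the Beck--Pogorzelski result then produces the desired Cantor spectrum of Lebesgue measure zero.

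For the ``in particular'' clause I would treat the two situations separately. If $\card{\alphab}=2$ then also $\card{\alphabEv}=2$ (since $\alphabEv\subseteq\alphab$ and $\card{\alphabEv}\geq 2$), so Proposition~4.1 of \cite{LiuQu_Simple}, cited in Subsection~\ref{subsec:BoshCond}, tells us that (B) is satisfied unconditionally; equivalently, from the remark after Proposition~\ref{prop:BoundedProdBosh}, for $k>\eventNr$ the relevant product is the empty product and is trivially bounded along $k_{r}:=r$. If instead $\card{\alphabEv}=3$ and $\liminf_{i\to\infty}n_{m_{i}+1}<\infty$, then Corollary~\ref{coro:A3Bosh} yields (B). In either case the first part applies and we conclude.

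I do not anticipate a genuine obstacle here: the statement is a transparent corollary of results already in hand, and the only mild subtlety is to record the matching of hypotheses (finiteness of the ranges of $p,q$, aperiodicity of $\subshift$ and of $\widetilde{\subshift}$, minimality, unique ergodicity) so that the Beck--Pogorzelski proposition may be invoked verbatim. The proof should therefore be essentially one paragraph plus a case split for the ``in particular'' part.
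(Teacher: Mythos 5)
Your proposal matches the paper's own (implicit) argument exactly: the corollary is obtained by combining Proposition~\ref{prop:BoundedProdBosh} with the quoted Beck--Pogorzelski result, and handling the two special cases via Proposition~4.1 of \cite{LiuQu_Simple} (for $\card{\alphab}=2$, hence $\card{\alphabEv}=2$) and Corollary~\ref{coro:A3Bosh} (for $\card{\alphabEv}=3$). Your additional care in verifying minimality, unique ergodicity and aperiodicity before invoking the Beck--Pogorzelski proposition is correct and consistent with the paper's standing assumptions.
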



\section*{Acknowledgement}

The author would like to thank his thesis advisor Daniel Lenz for his guidance, encouragement and support and Michael Baake for his interest in the authors work. He would also like to express his thanks to Siegfried Beckus, Rostislav Grigrochuk and Aitor Pérez for helpful discussions as well as to Tatiana Smirnova-Nagnibeda for fruitful suggestions and the hospitality at the University of Geneva. In addition, the author would like to thank the anonymous reviewer for the careful reading of the manuscript and valuable comments during the publication process at ``Ergodic Theory and Dynamical Systems''. The author gratefully acknowledges financial support in form of the Thuringian state scholarship (``Landesgraduiertenstipendium'') as well as of the DFG Research Training Group ``Quantum and Gravitational Fields'' (GRK 1523).

\newcommand{\etalchar}[1]{$^{#1}$}

\end{document}